\newtheoremstyle{standard}%
{9pt}%
{9pt}%
{\it}
{}%
{\bfseries}%
{}
{ }%
{#3}%
\newcommand{\db}[1]{(\!({#1})\!)}
\newcommand{\centralc}{C}
\newcommand{\subL}{P}
\numberwithin{equation}{section}
\newcommand{\N}{{\mathbb N}}
\newcommand{\Z}{{\mathbb Z}}
\newcommand{\Q}{{\mathbb Q}}
\newcommand{\C}{{\mathbb C}}
\newcommand{\wi}{i}
\newcommand{\wh}{h}
\newcommand{\Har}{H}
\newcommand{\wn}{n}
\newcommand{\wx}{x}
\newcommand{\wl}{l}
\newcommand{\rankL}{d}
\newcommand{\mN}{N}
\newcommand{\mW}{W}
\newcommand{\module}{M}
\newcommand{\mn}{p}
\newcommand{\hei}{{\mathfrak h}}
\newcommand{\sU}{{\mathscr U}}
\newcommand{\nor}{\begin{subarray}{c}\circ\\\circ\end{subarray}}
\newcommand{\fg}{{\mathfrak g}}
\newcommand{\fh}{{\mathfrak h}}
\newcommand{\ul}[1]{{#1}}
\newcommand{\lu}{u}
\newcommand{\lv}{v}
\newcommand{\mK}{K}
\newcommand{\lom}{r}
\newcommand{\lJ}{s}
\newcommand{\lE}{t}
\newcommand{\ExB}{E}
\newcommand{\lao}{r}
\newcommand{\laJ}{s}
\newcommand{\laE}{t}
\newcommand{\sv}{P}
\newcommand{\vac}{{\mathbf 1}}
\newcommand{\lattice}{L}
\newcommand{\lvE}{\ExB_{t}\lv}
\DeclareMathOperator{\id}{id}
\DeclareMathOperator{\Span}{Span} 
\DeclareMathOperator{\wt}{wt}
\DeclareMathOperator{\rank}{rank} 
\DeclareMathOperator{\tw}{tw}
\newtheorem{lemma}{Lemma}[section]
\newtheorem{theorem}[lemma]{Theorem}
\newtheorem{proposition}[lemma]{Proposition}
\theoremstyle{definition}
\newtheorem{definition}[lemma]{Definition}
\newtheorem{remark}[lemma]{Remark}
\theoremstyle{standard}
\title{The irreducible weak modules for the fixed point subalgebra of the vertex algebra associated to
a non-degenerate even lattice by an automorphism of order $2$ (Part $1$)}
\author{Kenichiro Tanabe\footnote{Research was partially supported by the Grant-in-aid
(No. 18K03198) for Scientific Research, JSPS.}\\\\
Department of Mathematics\\
Hokkaido University\\
Kita 10, Nishi 8, Kita-Ku, Sapporo, Hokkaido, 060-0810\\
Japan\\\\
ktanabe@math.sci.hokudai.ac.jp}
\date{}
\begin{document}
\maketitle

\begin{abstract}
Let $V_{\lattice}$ be the vertex algebra  associated to a non-degenerate even lattice $\lattice$,
$\theta$ the automorphism of $V_{\lattice}$ induced from the $-1$-isometry of $\lattice$, and
$V_{\lattice}^{+}$ the fixed point subalgebra of $V_{\lattice}$ under the action of $\theta$.
In this series of papers, we classify the irreducible weak $V_{\lattice}^{+}$-modules
and show that any irreducible weak $V_{\lattice}^{+}$-module 
is isomorphic to a weak submodule of some irreducible weak $V_{\lattice}$-module or 
to a submodule of some irreducible $\theta$-twisted $V_{\lattice}$-module.

In this paper (Part 1), we show that when the rank of $L$ is $1$,
every non-zero weak $V_{\lattice}^{+}$-module contains a non-zero $M(1)^{+}$-module,
where $M(1)^{+}$ is the fixed point subalgebra of the Heisenberg vertex operator algebra $M(1)$
under the action of $\theta$.
\end{abstract}

\bigskip
\noindent{\it Mathematics Subject Classification.} 17B69

\noindent{\it Key Words.} vertex algebras, lattices, weak modules.

\tableofcontents
\section{\label{section:introduction}Introduction}
Let $V$ be a vertex algebra, $G$ a finite automorphism group of $V$,
and $V^G$ the fixed point subalgebra of $V$ under the action of $G$: $V^{G}=\{u\in V\ |\ gu=u\mbox{ for all }g\in G\}$.
The  fixed point subalgebras play an important role in the study of vertex algebras, particularly in the 
construction of interesting examples.
For example,  let $V_{\lattice}$ be the vertex algebra associated to a non-degenerate even lattice $\lattice$ of finite rank
and $\theta$ the automorphism of $V_{\lattice}$ of order $2$ induced from the $-1$-isometry of $\lattice$.
We write $V_{\lattice}^{\pm}=\{a \in V_{\lattice}\ |\ \theta(a)=\pm a\}$ for simplicity.
Then, the moonshine vertex algebra 
$V^{\natural}$ is constructed from $V_{\Lambda}^{+}$ and an irreducible $V_{\Lambda}^{+}$-module
in \cite{FLM} where $\Lambda$ is the Leech lattice.
We remark that $V_{\Lambda}^{+}$ is also a fixed point subalgebra of $V^{\natural}$
under the action of an automorphism of order $2$.

One of the main problems about $V^G$ is to describe the $V^G$-modules in terms of $V$ and $G$.
If $V$ is a vertex operator algebra, it is conjectured that under some conditions on $V$,
every irreducible $V^G$-module is a submodule of some irreducible $g$-twisted 
$V$-module for some $g\in G$ (cf. \cite{DVVV1989}).
The conjecture is confirmed for many examples including $V_{\lattice}^{+}$
where $L$ is a positive definite even lattice (cf. \cite{AD2004,DLaTYY2004,DN1999-1,DN1999-2,DN2001,TY2007,TY2013}).
In those examples, they classify the irreducible $V^{G}$-modules directly
by investigating the Zhu algebra, which is an associative $\C$-algebra introduced in \cite{Z1996},
since \cite[Theorem 2.2.1]{Z1996} says that
for a vertex operator algebra $V$
there is a one to one correspondence between the set of all isomorphism classes of irreducible $\N$-graded weak $V$-modules 
and that of irreducible modules for the Zhu algebra associated to $V$, where we note that
an arbitrary $V$-module is automatically an $\N$-graded weak $V$-module.
We recall some results on the representations of $V_{L}$ and $V_{L}^{+}$.
It is well known that the vertex algebra $V_{L}$ is a vertex operator algebra 
if and only if $\lattice$ is positive definite.
It is shown in \cite[Theorem 3.1]{Dong1993}
that
$\{V_{\lambda+\lattice}\ |\ \lambda+\lattice\in \lattice^{\perp}/L\}$ is a complete set of representatives of equivalence classes of
the irreducible weak $V_{\lattice}$-modules (see Definition \ref{definition:weak-module} for the definition of a weak module),
where $L^{\perp}$ is the dual lattice of $\lattice$. 
It is also shown in \cite[Theorem 3.16]{DLM1997} that
every weak $V_{\lattice}$-module is 
completely reducible.
The corresponding results for $\theta$-twisted weak $V_{\lattice}$-modules are obtained in \cite{Dong1994}.
When the lattice $\lattice$ is positive definite, 
the irreducible $V_{\lattice}^{+}$-modules 
are classified in \cite{AD2004,DN1999-2},
the fusion rules are determined in \cite{Abe2001, ADL2005}
and it is established that $V_{\lattice}^{+}$ is
$C_2$-cofinite in \cite{ABD2004, Yamsk2004} and rational in \cite{Abe2005, DJL2012}.
Thus, in this case it follows from \cite[Theorem 4.5]{ABD2004} that
$V_{\lattice}^{+}$ is regular and every irreducible weak $V_{\lattice}^{+}$-module
is an irreducible $V_{\lattice}^{+}$-module.
Here, it is worth mentioning that if a vertex operator algebra $V$
is simple and $C_2$-cofinite, then so is $V^G$ for any
finite solvable automorphism group $G$ of $V$ by \cite{Mi2015}.

We consider the case that $\lattice$ is not positive definite.
In this case, vertex algebras $V_{\lattice}$ and $V_{\lattice}^{+}$ are also related to $V^{\natural}$. 
In fact, in \cite{B1992}, Borcherds  constructs the monster Lie algebra 
from the tensor product $V^{\natural}\otimes_{\C}V_{II_{1,1}}$ 
where $II_{1,1}$ is the even unimodular Lorentzian lattice of rank $2$
and uses this Lie algebra to prove the moonshine conjecture stated in \cite{CN1979}.
Moreover, let $II_{25,1}$ be the even unimodular Lorentzian lattice of signature $(25,1)$
and $\Gamma$ a Niemeier lattice, where we recall that the Leech lattice $\Lambda$ is one of the Niemeier lattices.
It is known that the direct sum $\Gamma\oplus II_{1,1}$
is isomorphic to $II_{25,1}$, which implies that
 $V_{II_{25,1}}\cong V_{\Gamma}\otimes V_{II_{1,1}}$
and hence  $V_{II_{25,1}}^{+}\cong V_{\Gamma}^{+}\otimes V_{II_{1,1}}^{+}\oplus
V_{\Gamma}^{-}\otimes V_{II_{1,1}}^{-}$.
Thus,  $V_{\Gamma}^{+}$'s are related to each other through $V_{II_{25,1}}^{+}$.
Since $V_{\Lambda}^{+}$ is a fixed point subalgebra of $V^{\natural}$ as mentioned above
and there are several interesting connections between $II_{25,1}$ and 
 the Niemeier lattices obtained in 
\cite{B1985} and \cite{CS1982-1,CS1982-2} (see also \cite[Chapters 24 and 26]{CS1999}),
one may expect that 
the study of $V_{II_{25,1}}$ and its subalgebras including $V_{II_{25,1}}^{+}$ provides a better understanding of the moonshine vertex algebra $V^{\natural}$ and related algebras.
This is one of the motivations to study representations of $V_{\lattice}^{+}$.
As mentioned above $V_{\lattice}$ is not a vertex operator algebra in this case, however,
we note that the conjecture about representations for $V^G$ above makes sense even for (weak) modules for a vertex algebra $V$.
For $V_{\lattice}^{+}$-modules, 
the irreducible $V_{\lattice}^{+}$-modules 
are classified by using the Zhu algebra in \cite{J2006, Yamsk2008} and it is established that $V_{\lattice}^{+}$ is $C_2$-cofinite in
\cite{JY2010} and rational in \cite{Yamsk2009}.
However, the study of weak $V_{\lattice}^{+}$-modules has not progressed
in spite of the fact that $V_{\lattice}^{+}$ itself is a weak module
but not a module for $V_{\lattice}^{+}$,
because of the absence of useful tool like the Zhu algebras for weak modules.

The following is the main result of this series of papers, which implies that
for any non-degenerate even lattice $\lattice$ of finite rank,
every irreducible weak $V_{\lattice}^{+}$-module
is isomorphic to a weak submodule of some irreducible weak $V_{\lattice}$-module or to a submodule of some 
 irreducible $\theta$-twisted $V_{\lattice}$-module.
Namely, the conjecture above holds for irreducible weak $V_{\lattice}^{+}$-modules.

\begin{theorem}
\label{theorem:classification-weak-module}
Let $\lattice$ be a non-degenerate even lattice of finite rank with a bilinear form $\langle\ ,\ \rangle$.
The following is a complete set of representatives of equivalence classes of the irreducible weak $V_{\lattice}^{+}$-modules:
\begin{enumerate}
\item
$V_{\lambda+\lattice}^{\pm}$, $\lambda+\lattice\in \lattice^{\perp}/\lattice$ with $2\lambda\in \lattice$.   
\item
$V_{\lambda+\lattice}\cong V_{-\lambda+\lattice}$, $\lambda+\lattice\in \lattice^{\perp}/\lattice$ with $2\lambda\not\in \lattice$.   
\item
$V_{\lattice}^{T_{\chi},\pm}$ for any irreducible $\hat{\lattice}/P$-module $T_{\chi}$ with central character $\chi$.
\end{enumerate}
\end{theorem}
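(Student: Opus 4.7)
The plan is to reduce the classification of irreducible weak $V_{\lattice}^{+}$-modules to the already-available classifications of irreducible (untwisted and $\theta$-twisted) weak $V_{\lattice}$-modules of \cite{Dong1993,Dong1994,DLM1997}. Given an arbitrary irreducible weak $V_{\lattice}^{+}$-module $W$, the goal is to realise $W$ as a weak $V_{\lattice}^{+}$-submodule, or as a $\theta$-eigenspace summand, of some $V_{\lambda+\lattice}$ or some irreducible $\theta$-twisted $V_{\lattice}$-module. Once that is achieved, the list (1)--(3) drops out together with the routine verification that those modules are pairwise inequivalent as weak $V_{\lattice}^{+}$-modules.

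The reduction would proceed in two stages. First, since $\lattice$ is non-degenerate, the Heisenberg vertex operator algebra $M(1)$ embeds in $V_{\lattice}$ and hence $M(1)^{+}\subset V_{\lattice}^{+}$. The crucial first step, announced in the abstract and carried out in Part 1 for rank $1$, is the statement that every non-zero weak $V_{\lattice}^{+}$-module contains a non-zero ordinary (i.e.\ $\N$-graded) $M(1)^{+}$-module; once in hand, the classification of irreducible $M(1)^{+}$-modules from \cite{DN1999-1} leaves only two families to consider, the untwisted-type $M(1,\lambda)^{\pm}$ and the $\theta$-twisted-type $M(1)(\tau)^{\pm}$. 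Second, I would enlarge a chosen irreducible $M(1)^{+}$-submodule $U\subset W$ by letting the additional generators of $V_{\lattice}^{+}$ over $M(1)^{+}$ act, namely vectors of the form $e^{\alpha}\pm e^{-\alpha}$ together with their $M(1)$-derivatives, for $\alpha\in\lattice$. Untwisted-type $U$ would propagate through these to build up the full lattice-translate decomposition of some $V_{\lambda+\lattice}$, placing $W$ in case (1) or (2), while twisted-type $U$ would be carried by $\theta$-twisted vertex operators into an irreducible $\theta$-twisted $V_{\lattice}$-module, giving case (3). Complete reducibility of weak $V_{\lattice}$-modules \cite{DLM1997} and its $\theta$-twisted analogue \cite{Dong1994} would then force $W$ to coincide with exactly one of the irreducibles in (1)--(3).

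The main obstacle is the first stage. Without a grading on $W$, the Zhu-algebra machinery of \cite{Z1996} that underlies the earlier classifications in \cite{J2006,Yamsk2008} is unavailable, so extracting a graded $M(1)^{+}$-submodule of $W$ demands weak-module-level analysis of the $V_{\lattice}^{+}$-action. For rank $1$ the plan is to fix a non-zero $w\in W$, control the $M(1)^{+}$-submodule generated by $w$ using the $C_{2}$-cofiniteness and rationality of $M(1)^{+}$ established in \cite{ABD2004,Abe2005}, and isolate within it an ordinary submodule by an argument tailored to the rank-$1$ structure (in particular, exploiting the explicit lowest-weight behaviour of the two families of irreducible $M(1)^{+}$-modules). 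Extending this step to arbitrary rank, and then combining it with the propagation argument, will be the work of the later parts of the series; the propagation step itself, although technically delicate, should be comparatively routine once the rigid skeleton provided by an ordinary $M(1)^{+}$-submodule is in place.
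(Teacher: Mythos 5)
Your high-level outline (first embed a nonzero $M(1)^{+}$-module into an arbitrary nonzero weak $V_{L}^{+}$-module, then propagate along the lattice generators $e^{\alpha}+\theta(e^{\alpha})$ and match against the known untwisted and $\theta$-twisted $V_{L}$-module classifications) is indeed the strategy of this series of papers. However, your proposed mechanism for the crucial first stage contains a genuine error: you plan to ``control the $M(1)^{+}$-submodule generated by $w$ using the $C_{2}$-cofiniteness and rationality of $M(1)^{+}$ established in \cite{ABD2004,Abe2005}.'' Those papers establish $C_{2}$-cofiniteness and rationality of $V_{L}^{+}$ for \emph{positive definite} $L$, not of $M(1)^{+}$; the vertex operator algebra $M(1)^{+}$ is neither rational nor $C_{2}$-cofinite (it has a continuum of inequivalent irreducible modules $M(1,\lambda)^{\pm}$), so no such finiteness control is available. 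The paper's actual argument is entirely different: it produces explicit null vectors of $V_{L}^{+}$ (the elements $P^{(8),H}$, $P^{(9)}$, $Q^{(4)}$, etc.\ of Lemma \ref{lemma:relations-M(1)-V(lattice)+}, found by computer algebra), lets their modes act on a vector $u$ chosen so that $\epsilon(\omega,u)$ is minimal, and by careful reordering of modes forces $\epsilon(\omega,u)\leq 1$ and $\epsilon(J,u)\leq 3$; this yields a simultaneous eigenvector for $\omega_{1}$ and $\Har_{3}$ lying in $\Omega_{M(1)^{+}}(M)$, hence an irreducible $A(M(1)^{+})$-module, and only then does \cite[Theorem 6.2]{DLM1998t} produce the nonzero $\N$-graded $M(1)^{+}$-submodule.

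Two further points. First, passing from ``a nonzero $M(1)^{+}$-submodule'' to ``an irreducible $M(1)^{+}$-submodule'' is not automatic precisely because $M(1)^{+}$ is not rational: weak $M(1)^{+}$-modules need not be completely reducible, which is why Part 2 of the series is devoted to computing extension groups. Second, your concluding appeal to complete reducibility of weak $V_{L}$-modules \cite{DLM1997} does not directly apply, since the module $W$ carries a priori only a $V_{L}^{+}$-action; the paper instead has to analyze intertwining operators among weak $M(1)^{+}$-modules (Part 3) to determine which weak $V_{L}^{+}$-module structures can exist on a direct sum of irreducible $M(1)^{+}$-modules. So the propagation step is far from routine, and the skeleton you describe cannot be completed without replacing the false finiteness input by the explicit relation-chasing (or some substitute) and without the Ext and intertwining-operator analysis.
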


Here, 
$V_{\lambda+\lattice}^{\pm}=\{\lu\in V_{\lambda+\lattice}\ |\ \theta(\lu)=\pm \lu\}$
for $\lambda+\lattice\in \lattice^{\perp}/\lattice$ with $2\lambda\in \lattice$,
$\hat{\lattice}$ is the canonical central extension of $\lattice$
by the cyclic group $\langle\kappa\rangle$ of order $2$ with  the commutator map
$c(\alpha,\beta)=\kappa^{\langle\alpha,\beta\rangle}$ for $\alpha,\beta\in\lattice$,
$\subL=\{\theta(a) a^{-1}\ |\ a\in\hat{\lattice}\}$,
$V_{\lattice}^{T_{\chi}}$ is a $\theta$-twisted $V_{\lattice}$-module,
and $V_{\lattice}^{T_{\chi},\pm}=\{\lu\in V_{\lattice}^{T_{\chi}}\ |\ \theta(\lu)=\pm \lu\}$.
Note that in Theorem \ref{theorem:classification-weak-module}, 
$V_{\lattice}^{T_{\chi},\pm}$ in (3) are $V_{\lattice}^{+}$-modules,
however, if $\lattice$ is not positive definite,
then $V_{\lambda+\lattice}^{\pm}$ in (1) and $V_{\lambda+\lattice}$ in (2) are not  
$V_{\lattice}^{+}$-modules (see \eqref{eq:dimC(Vlambda+lattice)n=+infty}).
If $\lattice$ is positive definite, then Theorem \ref{theorem:classification-weak-module}
is the same as \cite[Theorem 7.7]{AD2004} and \cite[Theorem 5.13]{DN1999-2}.
Using Theorem \ref{theorem:classification-weak-module},  we show in \cite[Theorem 1.1]{Tanabe-nondeg-rep} that every weak $V_{\lattice}^{+}$-module is completely reducible
for any non-degenerate even lattice $\lattice$ of finite rank.

In this paper, Part 1 of a series of three papers, we show that when the rank of $L$ is $1$,
every non-zero weak $V_{\lattice}^{+}$-module contains a non-zero $M(1)^{+}$-module,
where $M(1)^{+}$ is the fixed point subalgebra of the Heisenberg vertex operator algebra $M(1)$
under the action of $\theta$ (Proposition \ref{proposition:Zhu-Omega}). In Part 2,  we compute extension groups for $M(1)^{+}$-modules.
In Part 3,  after studying intertwining operators for weak $M(1)^{+}$-modules, we show Theorem \ref{theorem:classification-weak-module}.

Let us briefly explain the basic idea to show Theorem \ref{theorem:classification-weak-module} in the case that $\rank L=1$.
We write $L=\Z\alpha$ and we further assume that $\langle\alpha,\alpha\rangle\neq 2$ to simplify the argument.
When $\langle\alpha,\alpha\rangle=2$, we only need to change the set of generators for $V_{\lattice}^{+}$
in the following argument.
Let $V$ be a vertex algebra and $(\module,Y_{\module})$ a weak $V$-module.
For $\lu\in V$ and $\lv\in\module$, we write the expansion
of $Y_{\module}(u,x)v$ by $Y_{\module}(u,x)v=\sum_{i\in\Z}u_ivx^{-i-1}$ 
and define $\epsilon(\lu,\lv)\in\Z\cup\{-\infty\}$ by
\begin{align}
\lu_{\epsilon(\lu,\lv)}\lv&\neq 0
\mbox{ and }\lu_{i}\lv=0\mbox{ for all }i>\epsilon(\lu,\lv)
\end{align}
if $Y_{\module}(u,x)v\neq 0$ and $\epsilon(\lu,\lv)=-\infty$
if $Y_{\module}(u,x)v= 0$.
It is known that the vertex operator algebra $M(1)$ associated to the
Heisenberg algebra is a subalgebra of $V_{\lattice}$
and the fixed point subalgebra $M(1)^{+}=M(1)^{\langle\theta\rangle}$ under the action of $\theta$
is a subalgebra of $V_{\lattice}^{+}$.
The irreducible $M(1)^{+}$-modules 
are classified in \cite{DN1999-1, DN2001}
and the fusion rules for $M(1)^{+}$-modules are determined in \cite{Abe2000, ADL2005}.
The vertex operator algebra $M(1)^{+}$ 
is generated by the conformal vector (Virasoro element) $\omega$ and a homogeneous element $\Har$ (or $J$) of weight $4$,
 and $V_{\lattice}^{+}$ is generated by $M(1)^{+}$ and $E=e^{\alpha}+\theta(e^{\alpha})$ (see \eqref{eq:definition-omega-J-H}).
We can find some relations for $\omega,\Har$ in $M(1)^{+}$ and for $\omega,\Har, E$ in $V_{\lattice}^{+}$ with the help of computer algebra system 
Risa/Asir\cite{Risa/Asir} (Lemma \ref{lemma:relations-M(1)-V(lattice)+}).
Let $\lv$ 
be a non-zero element of a weak $V_{\lattice}^{+}$-module $\module$.
If $\epsilon(\omega,\lv)\geq 1$, then taking suitable actions of the relations on $\lv$
and using the commutation relations,
we obtain relations in 
$\Span_{\C}\{\omega_{\epsilon(\omega,\lv)}^i\Har_{\epsilon(\Har,\lv)}^{j}\lv
\ |\ i,j\in\Z_{\geq 0}\}$ or in
$\Span_{\C}\{\omega_{\epsilon(\omega,\lv)}^i\Har_{\epsilon(\Har,\lv)}^{j}
\ExB_{\epsilon(\ExB,\lv)}\lv\ |\ i,j\in\Z_{\geq 0}\}$
with the help of computer algebra system 
Risa/Asir (Lemmas \ref{lemma:m1+wJ} and \ref{lemma:r=1-s=3}).
Using these relations in $\module$, we can get
a simultaneous eigenvector $\lu\in\module$ for $\omega_1$ and $\Har_{3}$ with
$\epsilon(\omega,\lu)\leq 1$ and $\epsilon(\Har,\lu)\leq 3$, namely
we have an irreducible module $\C \lu$ for the Zhu algebra $A(M(1)^{+})$ of 
$M(1)^{+}$ (Lemmas \ref{lemma:r=1-s=3}, \ref{lemma:structure-Vlattice-M1}, \ref{lemma:structure-Vlattice-M1-norm2}, and \ref{lemma:structure-Vlattice-M1-norm1-2}).
Thus, by \cite[Theorem 6.2]{DLM1998t} we obtain a non-zero $M(1)^{+}$-submodule of $\module$ (Proposition \ref{proposition:Zhu-Omega}).
Moreover, we have some conditions on $\epsilon(\ExB,\lu)$ (Lemmas \ref{lemma:structure-Vlattice-M1}, \ref{lemma:structure-Vlattice-M1-norm2}, and \ref{lemma:structure-Vlattice-M1-norm1-2}).
Using results of extension groups for $M(1)^{+}$-modules (Part 2), we have an irreducible $M(1)^{+}$-module $\mK$ in the $M(1)^{+}$-submodule of $\module$ generated by $\C \lu$ (Part 3).
Since $V_{\lattice}^{+}$ is a direct sum of irreducible
$M(1)^{+}$-modules, for any irreducible $M(1)^{+}$-submodule $\mN$ of $V_{\lattice}^{+}$,
the $V_{\lattice}^{+}$-module structure of $\module$
induces an intertwining operator $I(\mbox{ },x) : \mN\times K\rightarrow\module\db{x}$ for weak $M(1)^{+}$-modules.
By using results of extension groups for $M(1)^{+}$-modules (Part 2),
the same argument as above shows that 
there exists an $M(1)^{+}$-module which is a
direct sum of irreducible $M(1)^{+}$-modules in the image of $I(\mbox{ },x)$ (Part 3).
Thus, we obtain a weak irreducible $V_{\lattice}^{+}$-submodule $\mW$ of $\module$
which is isomorphic to a submodule of a $\theta$-twisted irreducible $V_{\lattice}$-module or 
is a direct sum of  pairwise non-isomorphic irreducible $M(1)^{+}$-modules (Part 3).
In the latter case,
by a standard argument we can determine the possible weak $V_{\lattice}^{+}$-module structures for such an $M(1)^{+}$-module (Part 3)
and hence we obtain the desired result.
For general $\lattice$,
we divide our analysis into four cases based on the norm of an element of $\C\otimes_{\Z}\lattice$
and carry out the procedure above by an enormous amount of computation.

Complicated computation has been done by a computer algebra system Risa/Asir\cite{Risa/Asir}.
Throughout this paper, the word \lq\lq a direct computation\rq\rq \ means a direct computation with the help of Risa/Asir.

The organization of the paper is as follows. 
In Section \ref{section:preliminary} we recall some basic properties of the
vertex algebra $M(1)$ associated to the Heisenberg algebra and 
the vertex algebra $V_{\lattice}$ associated to a non-degenerate even lattice $\lattice$.
In Section \ref{section:Modules for the Zhu algera of}
under the condition that the rank of $L$ is $1$,
we construct an irreducible module for the Zhu algebra
of $M(1)^{+}$ in a non-zero weak $V_{\lattice}^{+}$-module $\module$.
Thus, there is a non-zero $M(1)^{+}$-module in $\module$.
In \ref{section:appendix} we put 
computations of $a_{k}b$ for some $a,b\in V_{\lattice}^{+}$
and $k=0,1,\ldots$ to find the commutation relation $[a_{i},b_{j}]=\sum_{k=0}^{\infty}\binom{i}{k}(a_{k}b)_{i+j-k}$. 
In \ref{section:notation} we list some notation.

\section{\label{section:preliminary}Preliminary}
We assume that the reader is familiar with the basic knowledge on
vertex algebras as presented in \cite{B1986,FLM,LL,Li1996}. 

Throughout this paper, $\mn$ is a non-zero complex number,
$\N$ denotes the set of all non-negative integers,
$\Z$ denotes the set of all integers, 
$\lattice$ is a non-degenerate even lattice of finite rank $\rankL$ with a bilinear form $\langle\ ,\ \rangle$,
$(V,Y,{\mathbf 1})$ is a vertex algebra.
Recall that $V$ is the underlying vector space, 
$Y(\mbox{ },\wx)$ is the linear map from $V\otimes_{\C}V$ to $V\db{x}$, and
${\mathbf 1}$ is the vacuum vector.
Throughout this paper, we always assume that $V$ has an element $\omega$ such that $\omega_{0}a=a_{-2}\vac$ for all $a\in V$.
For a vertex operator algebra $V$, this condition automatically holds since $V$ has the conformal vector (Virasoro element).
For $i\in\Z$, we define
\begin{align}
	\Z_{< i}&=\{j\in\Z\ |\ j< i\}\mbox{ and }\Z_{> i}=\{j\in\Z\ |\ j> i\}.
\end{align}
The notion of a module for $V$ has been introduced in several papers, however, if $V$ is a vertex operator algebra,
then the notion of a module for $V$ viewed as a vertex algebra is different from the notion of a module for $V$ viewed as 
a vertex operator algebra (cf. \cite[Definitions 4.1.1 and 4.1.6]{LL}).
To avoid confusion, throughout this paper, we refer to a module for a vertex operator algebra defined in \cite[Definition 4.1.6]{LL} as a {\it module}
and to a module for a vertex algebra defined in \cite[Definition 4.1.1]{LL} as a {\it weak module}.
The reason why we use the terminology \lq\lq weak module\rq\rq\ is that when $V$ is a vertex operator algebra, a module for $V$ viewed as a vertex algebra is called 
a weak $V$-module (cf. \cite[p.157]{Li1996}, \cite[p.150]{DLM1997}, and \cite[Definition 2.3]{ABD2004}).
We write down the definition of a weak $V$-module:
\begin{definition}
\label{definition:weak-module}
A {\it weak $V$-module} $\module$ is a vector space over $\C$ equipped with a linear map
\begin{align}
\label{eq:inter-form}
Y_{\module}(\ , x) : V\otimes_{\C}\module&\rightarrow \module\db{x}\nonumber\\
a\otimes u&\mapsto  Y_{\module}(a, x)\lu=\sum_{n\in\Z}a_{n}\lu x^{-n-1}
\end{align}
such that the following conditions are satisfied:
\begin{enumerate}
\item $Y_{\module}(\vac,x)=\id_{\module}$.
\item
For $a,b\in V$ and $\lu\in \module$,
\begin{align}
\label{eq:inter-borcherds}
&x_0^{-1}\delta(\dfrac{x_1-x_2}{x_0})Y_{\module}(a,x_1)Y_{\module}(b,x_2)\lu-
x_0^{-1}\delta(\dfrac{x_2-x_1}{-x_0})Y_{\module}(b,x_2)Y_{\module}(a,x_1)\lu\nonumber\\
&=x_1^{-1}\delta(\dfrac{x_2+x_0}{x_1})Y_{\module}(Y(a,x_0)b,x_2)\lu.
\end{align}
\end{enumerate}
\end{definition}
For $n\in\C$ and a weak $V$-module $\module$, we define $M_{n}=\{\lu\in V\ |\ \omega_1 \lu=n \lu\}$.
For $a\in V_{n}\ (n\in\C)$, $\wt a$ denotes $n$.
For a vertex algebra $V$ which admits a decomposition $V=\oplus_{n\in\Z}V_n$ and a subset $U$ of a weak $V$-module, we 
define
\begin{align}
\label{eq:OmegaV(U)=BigluinU}
\Omega_{V}(U)&=\Big\{\lu\in U\ \Big|\ 
\begin{array}{l}
a_{i}\lu=0\ \mbox{for all homogeneous }a\in V\\
\mbox{and }i>\wt a-1.
\end{array}\Big\}.
\end{align}
For a vertex algebra $V$ which admits a decomposition $V=\oplus_{n\in\Z}V_n$, a weak $V$-module $\mN$
 is called {\it $\N$-graded} if $N$ admits a decomposition $N=\oplus_{n=0}^{\infty}N(n)$
such that $a_{i}N(n)\subset N(\wt a-i-1+n)$ for all homogeneous $a\in V$, $i\in\Z$, and $n\in\Z_{\geq 0}$, where 
we define $N(n)=0$ for all $n<0$. For a triple of weak $V$-modules $\module, \mN,\mW$,
$\lu\in\module, \lv\in\mW$, and an intertwining operator $I(\ ,x)$ from $\module\times \mW$ to $\mN$, 
we write the expansion of $I(u,x)v$ by
\begin{align}
I(\lu,x)\lv
&=\sum_{i\in\C}\lu_i\lv x^{-i-1}
=\sum_{i\in\C}I(\lu;i)\lv x^{-i-1}\in \mN\{x\}.
\end{align}
In this paper, we consider only the case that the image of $I(\mbox{ },x)$ is contained in
$\mN\db{x}$,
namely $I(\ ,x) : \module\times \mW\rightarrow \mN\db{x}$. 
For a subset $X$ of $\mW$,
\begin{align}
M\cdot X\mbox{ denotes }\Span_{\C}\{a_{i}\lu\ |\ a\in \module, i\in\Z, \lu\in X\}\subset N. 
\end{align}
For an intertwining operator $I(\mbox{ },x) : \module\times \mW\rightarrow \mN\db{x}$,
$\lu\in\module$, and $\lv\in \mW$, we define  $\epsilon_{I}(\lu,\lv)=\epsilon(\lu,\lv)\in\Z\cup\{-\infty\}$ by
\begin{align}
\label{eqn:max-vanish}
\lu_{\epsilon_{I}(\lu,\lv)}\lv&\neq 0\mbox{ and }\lu_{i}\lv
=0\mbox{ for all }i>\epsilon_{I}(\lu,\lv)
\end{align}
if $I(\lu,x)\lv\neq 0$ and $\epsilon_{I}(\lu,\lv)=-\infty$
if $I(\lu,x)\lv= 0$.
If $V$ is a simple vertex algebra and 
$(\module, Y_{\module})$ is a weak $V$-module, then 
it follows from \cite[Corollary 4.5.15]{LL} that
\begin{align}
\label{eq:epsilonI(u,v)inZ}
Y_{\module}(a,x)u\neq 0, \mbox{ namely }
\epsilon_{Y_{\module}}(a,u)\in\Z
\end{align}  
for any non-zero $a\in V$ and any non-zero $u\in M$.
We will frequently use the following easy formula:
\begin{lemma}
\label{lemma:comm-change}
Let $\module, \mW, \mN$ be three weak $V$-modules
and  $I(\ ,x) : \module\times \mW\rightarrow \mN\db{x}$ an intertwining operator. 
For $a\in V$, $b\in \module$, $m\in\Z_{\geq -1}$, $k\in\Z_{\leq -1}$, and $n\in\Z$,
\begin{align}
\label{eq:abm}
(a_{k}b)_{n}
&=\sum_{\begin{subarray}{l}i\leq m\\i+j-k=n\end{subarray}}\binom{-i-1}{-k-1}a_{i}b_{j}+\sum_{\begin{subarray}{l}i\geq m+1\\i+j-k=n\end{subarray}}\binom{-i-1}{-k-1}b_{j}a_{i}\nonumber\\
&\quad{}-\sum_{i=0}^{m}\binom{-i-1}{-k-1}\sum_{\wl=0}^{\infty}\binom{i}{\wl}(a_{\wl}b)_{n+k-\wl}\nonumber\\
&=\sum_{\begin{subarray}{l}i\leq m\\i+j-k=n\end{subarray}}
\binom{-i-1}{-k-1}a_{i}b_{j}+\sum_{\begin{subarray}{l}i\geq m+1\\i+j-k=n\end{subarray}}\binom{-i-1}{-k-1}b_{j}a_{i}\nonumber\\
&\quad{}+(-1)^{k}\sum_{\wl=0}^{\infty}\binom{\wl-k-1}{-k-1}
\binom{m-k}{\wl-k}(a_{\wl}b)_{n+k-\wl}.
\end{align}
\end{lemma}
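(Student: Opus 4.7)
The plan is to combine the $m=-1$ normal-ordered expansion of $(a_{k}b)_{n}$ (obtained from the Jacobi identity for intertwining operators) with the commutator formula for $a_{i}$ and $b_{j}$, and then to identify the resulting correction as the closed form on the right-hand side via a routine binomial identity.

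First I would establish the base case $m=-1$. Taking $\Res_{x_{0}}x_{0}^{k}$ of the Jacobi identity
\begin{equation*}
x_{0}^{-1}\delta\!\left(\tfrac{x_{1}-x_{2}}{x_{0}}\right)Y_{\mN}(a,x_{1})I(b,x_{2})-x_{0}^{-1}\delta\!\left(\tfrac{x_{2}-x_{1}}{-x_{0}}\right)I(b,x_{2})Y_{\mW}(a,x_{1})=x_{2}^{-1}\delta\!\left(\tfrac{x_{1}-x_{0}}{x_{2}}\right)I(Y_{\module}(a,x_{0})b,x_{2})
\end{equation*}
and extracting the coefficient of $x_{1}^{-1}x_{2}^{-n-1}$ after expanding $(x_{1}-x_{2})^{k}$ in non-negative powers of $x_{2}$ in the first term and $(x_{2}-x_{1})^{k}$ in non-negative powers of $x_{1}$ in the second, one collects contributions of the form $\binom{k}{p}(-1)^{p}a_{i}b_{j}$ and $\binom{k}{p}(-1)^{p}b_{j}a_{i}$. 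Using the standard identities $(-1)^{p}\binom{k}{p}=\binom{p-k-1}{p}=\binom{p-k-1}{-k-1}$ together with $\binom{-i-1}{-k-1}=(-1)^{-k-1}\binom{i-k-1}{-k-1}$, the signs collapse and one arrives at
\begin{equation*}
(a_{k}b)_{n}=\sum_{\substack{i\leq -1\\ i+j-k=n}}\binom{-i-1}{-k-1}a_{i}b_{j}+\sum_{\substack{i\geq 0\\ i+j-k=n}}\binom{-i-1}{-k-1}b_{j}a_{i},
\end{equation*}
where the first sum can legitimately be extended from $i\leq k$ to $i\leq -1$ since $\binom{-i-1}{-k-1}=0$ for $k<i\leq -1$. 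This is the $m=-1$ case of the lemma.

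For general $m\geq -1$, apply the commutator formula $[a_{i},b_{j}]=\sum_{\wl\geq 0}\binom{i}{\wl}(a_{\wl}b)_{i+j-\wl}$, itself obtained by taking $\Res_{x_{0}}$ of the Jacobi identity, to each term with $0\leq i\leq m$: rewrite $b_{j}a_{i}=a_{i}b_{j}-\sum_{\wl\geq 0}\binom{i}{\wl}(a_{\wl}b)_{i+j-\wl}$ and substitute into the $m=-1$ formula. Since $j=n-i+k$ gives $i+j-\wl=n+k-\wl$, this shifts those $b_{j}a_{i}$ into the first sum and produces precisely the correction $-\sum_{i=0}^{m}\binom{-i-1}{-k-1}\sum_{\wl\geq 0}\binom{i}{\wl}(a_{\wl}b)_{n+k-\wl}$ that appears in the first displayed expression of the lemma.

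Finally, to pass to the second expression it suffices to verify the identity
\begin{equation*}
\sum_{i=0}^{m}\binom{-i-1}{-k-1}\binom{i}{\wl}=(-1)^{k+1}\binom{\wl-k-1}{-k-1}\binom{m-k}{\wl-k}.
\end{equation*}
Writing $r=-k-1\geq 0$, the identity $\binom{-i-1}{r}=(-1)^{r}\binom{i+r}{r}$ reduces this to $\sum_{i=0}^{m}\binom{i+r}{r}\binom{i}{\wl}=\binom{\wl+r}{r}\binom{m+r+1}{\wl+r+1}$, which follows from the factorization $\binom{i+r}{r}\binom{i}{\wl}=\binom{\wl+r}{r}\binom{i+r}{\wl+r}$ together with the hockey-stick identity $\sum_{i=0}^{m}\binom{i+r}{\wl+r}=\binom{m+r+1}{\wl+r+1}$. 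The main obstacle I anticipate is simply the sign and index bookkeeping in the first step when unpacking the Jacobi identity; the remaining manipulations are routine.
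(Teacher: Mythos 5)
Your proposal is correct and follows essentially the same route as the paper: start from the standard normal-ordered expansion of $(a_{k}b)_{n}$ (the $i<0$ versus $i\geq 0$ split coming from the Jacobi identity), move the terms with $0\leq i\leq m$ across using the commutator formula, and then apply the binomial identity $\sum_{i=0}^{m}\binom{-i-1}{-k-1}\binom{i}{\wl}=(-1)^{k+1}\binom{\wl-k-1}{-k-1}\binom{m-k}{\wl-k}$. The only difference is that you supply details the paper leaves implicit (the residue computation for the base case and the hockey-stick verification of the binomial identity), both of which check out.
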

\begin{proof}
The first expression follows from
\begin{align}
(a_{k}b)_{n}
&=\sum_{\begin{subarray}{l}i<0\\i+j-k=n\end{subarray}}\binom{-i-1}{-k-1}a_{i}b_{j}+\sum_{\begin{subarray}{l}i\geq 0\\i+j-k=n\end{subarray}}\binom{-i-1}{-k-1}b_{j}a_{i}\nonumber\\
&=\sum_{\begin{subarray}{l}i\leq m\\i+j-k=n\end{subarray}}\binom{-i-1}{-k-1}a_{i}b_{j}+\sum_{\begin{subarray}{l}i\geq m+1\\i+j-k=n\end{subarray}}\binom{-i-1}{-k-1}b_{j}a_{i}
-\sum_{\begin{subarray}{l}0\leq i\leq m\\i+j-k=n\end{subarray}}\binom{-i-1}{-k-1}[a_{i},b_{j}].
\end{align}
The last expression follows from the fact that
$\sum_{i=0}^{m}\binom{-i-1}{-k-1}\binom{i}{\wl}
=(-1)^{k+1}\binom{\wl-k-1}{-k-1}
\binom{m-k}{\wl-k}$ for $l\in\Z_{\geq 0}$.
\end{proof}

We recall the vertex operator algebra $M(1)$ associated to the Heisenberg algebra and 
the vertex algebra $V_{\lattice}$ associated to a non-degenerate even lattice $\lattice$.
Let $\hei$ be a finite dimensional vector space equipped with a non-degenerate symmetric bilinear form
$\langle \mbox{ }, \mbox{ }\rangle$.
Set a Lie algebra
\begin{align}
\hat{\hei}&=\hei\otimes \C[t,t^{-1}]\oplus \C \centralc
\end{align} 
with the Lie bracket relations 
\begin{align}
[\beta\otimes t^{m},\gamma\otimes t^{n}]&=m\langle \beta,\gamma\rangle\delta_{m+n,0}\centralc,&
[\centralc,\hat{\hei}]&=0
\end{align}
for $\beta,\gamma\in \hei$ and $m,n\in\Z$.
For $\beta\in \hei$ and $n\in\Z$, $\beta(n)$ denotes $\beta\otimes t^{n}\in\widehat{H}$. 
Set two Lie subalgebras of $\fh$:
\begin{align}
\widehat{\hei}_{{\geq 0}}&=\bigoplus_{n\geq 0}\hei \otimes t^n\oplus \C \centralc&\mbox{ and }&&
\widehat{\hei}_{<0}&=\bigoplus_{n\leq -1}\hei\otimes t^n.
\end{align}
For $\beta\in\fh$,
$\C e^{\beta}$ denotes the one dimensional $\widehat{\fh}_{\geq 0}$-module uniquely determined
by the condition that for $\gamma\in\fh$
\begin{align}
\gamma(i)\cdot e^{\beta}&
=\left\{
\begin{array}{ll}
\langle\gamma,\beta\rangle e^{\beta}&\mbox{ for }i=0\\
0&\mbox{ for }i>0
\end{array}
\right.&&\mbox{ and }&
\centralc\cdot e^{\beta}&=e^{\beta}.
\end{align}
We take an $\widehat{\fh}$-module 
\begin{align}
\label{eq:untwist-induced}
M(1,\beta)&=\sU (\widehat{\fh})\otimes_{\sU (\widehat{\fh}_{\geq 0})}\C e^{\beta}
\cong \sU(\widehat{\fh}_{<0})\otimes_{\C}\C e^{\beta}
\end{align}
where $\sU(\fg)$ is the universal enveloping algebra of a Lie algebra $\fg$.
Then, $M(1)=M(1,0)$ has a vertex operator algebra structure with
the conformal vector
\begin{align}
\label{eq:conformal-vector}
\omega&=\dfrac{1}{2}\sum_{i=1}^{\dim\fh}h_i(-1)h_i^{\prime}(-1)\vac
\end{align}
where $\{h_1,\ldots,h_{\dim\fh}\}$ is a basis of $\fh$ and
$\{h_1^{\prime},\ldots,h_{\dim\fh}^{\prime}\}$ is its dual basis.
Moreover, $M(1,\beta)$ is an irreducible $M(1)$-module for any $\beta\in\fh$. 
The vertex operator algebra $M(1)$ is called the {vertex operator algebra associated to
 the Heisenberg algebra} $\oplus_{0\neq n\in\Z}\fh\otimes t^{n}\oplus \C \centralc$. 

Let $\lattice$ be a non-degenerate even lattice.
We define $\fh=\C\otimes_{\Z}\lattice$
and denote by $\lattice^{\perp}$ the dual of $\lattice$: $\lattice^{\perp}=\{\gamma\in\fh\ |\ \langle\beta,\gamma\rangle\in\Z\mbox{ for all }\beta\in\lattice\}$. 
Taking $M(1)$ for $\fh$,
we define $V_{\lambda+\lattice}=\oplus_{\beta\in\lambda+\lattice}M(1,\beta)$
for $\lambda+\lattice\in \lattice^{\perp}/\lattice$.
Then, $V_{\lattice}$ admits a unique vertex algebra structure compatible with the action of $M(1)$ and 
for each $\lambda+\lattice\in\lattice^{\perp}/\lattice$
the vector space
$V_{\lambda+\lattice}$ is an irreducible weak $V_{\lattice}$-module which admits the following decomposition:
\begin{align}
V_{\lambda+\lattice}&=\bigoplus_{n\in\langle\lambda,\lambda\rangle/2+\Z}(V_{\lambda+\lattice})_{n} \mbox{ where }
(V_{\lambda+\lattice})_{n}=\{a\in V_{\lambda+\lattice}\ |\ \omega_{1}a=na\}.
\end{align}
Note that if $\lattice$ is positive definite, then $\dim_{\C}(V_{\lambda+\lattice})_{n}<+\infty$
for all $n\in \lambda+\lattice$
and $(V_{\lambda+\lattice})_{\langle\lambda,\lambda\rangle/2+i}=0$ for sufficiently small $i\in\Z$.
If $\lattice$ is not positive definite, then  
\begin{align}
\label{eq:dimC(Vlambda+lattice)n=+infty}
\dim_{\C}(V_{\lambda+\lattice})_{n}=+\infty
\end{align}
for all $n\in \langle\lambda,\lambda\rangle/2+\Z$, which implies that $V_{\lambda+\lattice}$ is not a $V_{\lattice}$-module.
For $\alpha\in \fh$, we write
\begin{align}
E(\alpha)&=e^{\alpha}+\theta(e^{\alpha})
\end{align}

Let  $\hat{\lattice}$ be the canonical central extension of $\lattice$
by the cyclic group $\langle\kappa\rangle$ of order $2$ with  the commutator map
$c(\alpha,\beta)=\kappa^{\langle\alpha,\beta\rangle}$ for $\alpha,\beta\in\lattice$:
\begin{align}
	0\rightarrow \langle\kappa\rangle\overset{}{\rightarrow} \hat{\lattice}\overset{-}{\rightarrow} \lattice\rightarrow 0.
\end{align}
Then, the $-1$-isometry of $\lattice$ induces an automorphism $\theta$ of $\hat{\lattice}$ of order $2$
and an automorphism, by abuse of notation we also denote by $\theta$, of $V_{\lattice}$ of order $2$.
In $M(1)$, we have
\begin{align}
\label{eq:theta}
\theta(h^1(-i_1)\cdots h^n(-i_n)\vac)&=(-1)^{n}h^1(-i_1)\cdots h^n(-i_n)\vac
\end{align}
for $n\in\Z_{\geq 0}$, $h^1,\ldots,h^{n}\in\fh$, and $i_1,\ldots,i_n\in\Z_{>0}$.
For a weak $V_{\lattice}$-module $\module$,
we define a weak $V_{\lattice}$-module $(\module\circ \theta,Y_{\module\circ \theta})$
by $\module\circ\theta=\module$ and 
\begin{align}
Y_{\module\circ \theta}(a,x)&=Y_{\module}(\theta(a),x)
\end{align}
for  $a\in V_{\lattice}$.
Then
$V_{\lambda+\lattice}\circ\theta\cong V_{-\lambda+\lattice}$
for $\lambda\in \lattice^{\perp}$.
Thus, for $\lambda\in \lattice^{\perp}$ with $2\lambda\in\lattice$
we define
\begin{align}
V_{\lambda+\lattice}^{\pm}&=\{u\in V_{\lambda+\lattice}\ |\ \theta(u)=\pm u\}.
\end{align}
Next, we recall the construction of $\theta$-twisted modules for $M(1)$ and $V_{\lattice}$ following \cite{FLM}.
Set a Lie algebra
\begin{align}
\hat{\hei}[-1]&=\hei\otimes t^{1/2}\C[t,t^{-1}]\oplus \C \centralc
\end{align} 
with the Lie bracket relations 
\begin{align}
[\centralc,\hat{\hei}[-1]]&=0&\mbox{and}&&
[\alpha\otimes t^{m},\beta\otimes t^{n}]&=m\langle\alpha,\beta\rangle\delta_{m+n,0}\centralc
\end{align}
for $\alpha,\beta\in \hei$ and $m,n\in1/2+\Z$.
For $\alpha\in \hei$ and $n\in1/2+\Z$, $\alpha(n)$ denotes $\alpha\otimes t^{n}\in\widehat{\hei}$. 
Set two Lie subalgebras of $\hat{\hei}[-1]$:
\begin{align}
\widehat{\hei}[-1]_{{\geq 0}}&=\bigoplus_{n\in 1/2+\N}\hei\otimes t^n\oplus \C \centralc&\mbox{ and }&&
\widehat{\hei}[-1]_{{<0}}&=\bigoplus_{n\in 1/2+\N}\hei\otimes t^{-n}.
\end{align}
Let $\C \vac_{\tw}$ denote a unique one dimensional $\widehat{\hei}[-1]_{{\geq 0}}$-module 
such that 
\begin{align}
h(i)\cdot \vac_{\tw}&
=0\quad\mbox{ for }h\in\fh\mbox{ and }i\in \frac{1}{2}+\N,\nonumber\\
\centralc\cdot \vac_{\tw}&=\vac_{\tw}.
\end{align}
We take an $\widehat{\hei}[-1]$-module 
\begin{align}
\label{eq:twist-induced}
M(1)(\theta)
&=\sU (\widehat{\hei}[-1])\otimes_{\sU (\widehat{\hei}[-1]_{\geq 0})}\C u_{\ul{\zeta}}
\cong\sU (\widehat{\hei}[-1]_{<0})\otimes_{\C}\C u_{\ul{\zeta}}.
\end{align}
We define for $\alpha\in \hei$, 
\begin{align}
	\alpha(x)&=\sum_{i\in 1/2+\Z}\alpha(i)x^{-i-1}
	\end{align}
and for $u=\alpha_1(-\wi_1)\cdots \alpha_k(-\wi_k)\vac\in M(1)$, 
\begin{align}
	Y_{0}(u,x)&=\nor
\dfrac{1}{(\wi_1-1)!}	(\dfrac{d^{\wi_1-1}}{dx^{\wi_1-1}}\alpha_1(x))
	\cdots
\dfrac{1}{(\wi_k-1)!}	(\dfrac{d^{\wi_k-1}}{dx^{\wi_k-1}}\alpha_k(x))\nor.
\end{align}
Here, for $\beta_1,\ldots,\beta_{n}\in \fh$ and $i_1,\ldots,i_n\in1/2+\Z$, we define 
$\nor \beta_1(i_1)\cdots\beta_{n}(i_n)\nor$ inductively by
\begin{align}
\label{eq:nomal-ordering}
\nor \beta_1(i_1)\nor&=\beta_1(i_1)\qquad\mbox{ and}\nonumber\\
\nor \beta_1(i_1)\cdots\beta_{n}(i_n)\nor&=
\left\{
\begin{array}{ll}
\nor \beta_{2}(i_2)\cdots\beta_{n}(i_n)\nor \beta_1(i_1)&\mbox{if }i_1\geq 0,\\
\beta_{1}(i_1)\nor \beta_{2}(i_2)\cdots\beta_{n}(i_n)\nor &\mbox{if }i_1<0.
\end{array}\right.
\end{align}
Let $h^{[1]},\ldots,h^{[\dim\fh]}$ be an orthonormal basis of $\fh$.
We define $c_{mn}\in\Q$ for $ m,n\in \Z_{\geq 0}$ by
\begin{align}
	\sum_{m,n=0}^{\infty}c_{mn}x^{m}y^{n}&=-\log (\dfrac{(1+x)^{1/2}+(1+y)^{1/2}}{2})
	\end{align}
and
	\begin{align}
		\Delta_{x}&=\sum_{m,n=0}^{\infty}c_{mn}\sum_{i=1}^{\dim\fh}h^{[i]}(m)h^{[i]}(n)x^{-m-n}.
		\end{align}
Then, for $u\in M(1)$ we define a vertex operator $Y_{M(1)(\theta)}$ by
\begin{align}
Y_{M(1)(\theta)}(u,x)&=Y_{0}(e^{\Delta_{x}}u,x).
\end{align}
Then, \cite[Theorem 9.3.1]{FLM}
shows that 
$(M(1)(\theta),Y_{M(1)(\theta)})$ is an irreducible $\theta$-twisted $M(1)$-module.
Set a submodule $\subL=\{\theta(a) a^{-1}\ |\ a\in\hat{\lattice}\}$ of $\hat{\lattice}$.
Let $T_{\chi}$ be the irreducible $\hat{L}/\subL$-module associated to a
central character $\chi$ such that $\chi(\kappa)=-1$.
We set
\begin{align}
V_{\lattice}^{T_{\chi}}&=M(1)(\theta)\otimes T_{\chi}.
\end{align} 
Then, \cite[Theorem 9.5.3]{FLM} shows that 
$V_{\lattice}^{T_{\chi}}$ admits an irreducible $\theta$-twisted $V_{\lattice}$-module structure compatible with the action of $M(1)$.
We define the action of $\theta$ on $V_{\lattice}^{T_{\chi}}$ by
\begin{align}
\theta(h^1(-i_1)\cdots h^n(-i_n)\lu)&=(-1)^{n}h^1(-i_1)\cdots h^n(-i_n)\lu
\end{align}
for $n\in\Z_{\geq 0}$, $h^1,\ldots,h^{n}\in\fh$, $i_1,\ldots,i_n\in 1/2+\Z_{>0}$, and $\lu\in T_{\chi}$.
We set
\begin{align}
V_{\lattice}^{T_{\chi},\pm}&=\{\lu\in V_{\lattice}^{T_{\chi}}\ |\ \theta(u)=\pm u\}.
\end{align}

We recall the {\it Zhu algebra} $A(V)$ of a vertex operator algebra $V$ from \cite[Section 2]{Z1996}.
For homogeneous $a\in V$ and $b\in V$, we define
\begin{align}
\label{eq:zhu-ideal-multi}
a\circ b&=\sum_{i=0}^{\infty}\binom{\wt a}{i}a_{i-2}b\in V
\end{align}
and 
\begin{align}
\label{eq:zhu-bimodule-left}
a*b&=\sum_{i=0}^{\infty}\binom{\wt a}{i}a_{i-1}b\in V.
\end{align}
We extend \eqref{eq:zhu-ideal-multi} and \eqref{eq:zhu-bimodule-left} for an arbitrary $a\in V$ by linearity.
We also define
$O(V)=\Span_{\C}\{a\circ b\ |\ a,b\in V\}$.
Then, the quotient space
\begin{align}
\label{eq:zhu-bimodule}
A(V)&=M/O(V)\textcolor{red}{,}
\end{align}
called the {\em Zhu algebra} of $V$, is an associative $\C$-algebra with multiplication  
\eqref{eq:zhu-bimodule-left} by \cite[Theorem 2.1.1]{Z1996}.

\section{Modules for the Zhu algebra of $M(1)^{+}$ in a weak $V_{\lattice}^{+}$-module:
the case that $\rank L=1$}
\label{section:Modules for the Zhu algera of}

In this section, under the condition that the rank of $L$ is $1$,
we shall show that there exists an irreducible $A(M(1)^{+})$-module in an arbitrary non-zero 
weak $V_{\lattice}^{+}$-module.

Throughout this section, $\mn$ is a non-zero complex number, 
$\hei$ is a one dimensional vector space equipped with a non-degenerate symmetric bilinear form
$\langle \mbox{ }, \mbox{ }\rangle$, $\wh,\alpha\in\hei$ such that
\begin{align}
\langle \wh,\wh\rangle&=1\mbox{ and }
\langle\alpha,\alpha\rangle=\mn,
\end{align}
$\module,\mN,\mW$ are weak $M(1)^{+}$-modules,
and 
$I(\mbox{ },x) : M(1,\alpha)\times \mW\rightarrow N\db{x}$ is  a non-zero intertwining operator.
We define
\begin{align}
\omega&=\dfrac{1}{2}h(-1)^2\vac,\nonumber\\
\Har&=\dfrac{1}{3}h(-3)h(-1)\vac-\dfrac{1}{3}h(-2)^2\vac,\nonumber\\
J&=h(-1)^4\vac-2h(-3)h(-1)\vac+\dfrac{3}{2}h(-2)^2\vac\nonumber\\
&=-9\Har+4\omega_{-1}^2\vac-3\omega_{-3}\vac,\nonumber\\
\ExB&=\ExB(\alpha)=e^{\alpha}+\theta(e^{\alpha}).
\label{eq:definition-omega-J-H}
\end{align}
Since \begin{align}
	0&=\omega_{-2}h(-1)\vac-2\omega_{-1}h(-2)\vac+3h(-4)\vac\label{eq:w-2h-2wh3h}
\end{align}
and $[\omega_{0},a_{i}]=-ia_{i-1}$ for all $a\in M(1)$ and $i\in\Z$,
\begin{align}
\label{eq:h(i)vacinSpanBig}
h(j)\vac\in \Span\Big\{\omega_{-i_1}\cdots\omega_{-i_m}h(-k)\vac\ \Big|\ 
\begin{array}{l}m\in\Z_{\geq 0},i_1,\ldots,i_m\in\Z_{>0}\\
\mbox{ and }k=1,2,3\end{array}\Big\}
\end{align}
for all $j\in\Z$. 
For $i,j\in\Z$, a direct computation shows that
\begin{align}
	[\omega_{i},\omega_{j}]&=(i-j)\omega_{i+j-1}+\delta_{i+j-2,0}\dfrac{i(i-1)(i-2)}{12},\label{eq:wiwj}\\
	[\omega_{i},J_{j}]&=(3i-j)J_{i+j-1},\label{eq:wiJj}\\
	[\omega_i,\Har_{j}]&
	=(3i-j)\Har_{i+j-1}
	+\frac{i(i-1)(3i+j-6)}{6}\omega_{i+j-3}\nonumber\\
	&\quad{}+\dfrac{-1}{3}\binom{i}{5}\delta_{i+j-4,0},\label{eq:wiHj}\\
	[\omega_{i},\ExB_{j}]&=((-1+\dfrac{\mn}{2})i-j)\ExB_{i+j-1}, \label{eq:wiEji}\\
	[h(i),\omega_{j}]&=h(i+j-1),\label{eq:hiomegaj}\\
	[h(i),\Har_{j}]
	&=\big(\frac{i(i+j-2)(5i+j-5)}{6}+\binom{i}{3}\big)h(i+j-3),
\end{align}
and
\begin{align}
\label{eq:h(-2)h(-1)=omega0} 
	h(-2)h(-1)\vac&=
	\omega_{0 } \omega,\nonumber\\
	h(-3)h(-1)\vac&=
	\Har
	+\frac{1}{3}\omega_{0 }^{2}\omega,\nonumber\\
	h(-2)h(-2)\vac&=
	-2\Har
	+\frac{1}{3}\omega_{0 }^{2}\omega,\nonumber\\
	h(-3)h(-2)\vac&=\frac{-1}{2}\omega_{0 } \Har
	+\frac{1}{12}\omega_{0 }^{3}\omega,\nonumber\\
	h(-3)h(-3)\vac&=
	\frac{1}{3}\omega_{-2 }^2\vac
	+\frac{4}{5}\omega_{-1 } \Har
	+\frac{-1}{15}\omega_{0 }^{2}\omega_{-1 } \omega
	+\frac{-3}{10}\omega_{0 }^{2}\Har
	+\frac{1}{45}\omega_{0 }^{4}\omega.
\end{align}
It follows from \eqref{eq:h(i)vacinSpanBig}, \eqref{eq:wiHj}, and \eqref{eq:hiomegaj}--\eqref{eq:h(-2)h(-1)=omega0} 
that $M(1)^{+}$ is spanned by the elements 
$\omega_{-i_1}\cdots \omega_{-i_m}\Har_{-j_1}\cdots \Har_{-j_n}\vac$
where $m,n\in\Z_{\geq 0}$ and $i_1,\ldots,i_m,j_1,\ldots,j_n\in\Z_{>0}$,
which is already shown in \cite[Theorem 2.7]{DG1998}.

We have
\begin{align}
\label{eq:J1E=frac2}
J_{2}\ExB&=2(2\mn-1)\omega_{0 } \ExB,\quad
J_{3}\ExB=(\mn^2-\dfrac{\mn}{2})\ExB,\quad
J_{i}\ExB=0\mbox{ for }i\geq 4,
\end{align}
and 
\begin{align}
\label{eq:Har1ExB=dfrac2mn2mn1omega1ExB}
\Har_{2}\ExB&=\dfrac{1}{3}\omega_{0 } \ExB,\quad
\Har_{i}\ExB=0\mbox{ for }i\geq 3.
\end{align}
If $\mn\neq 1/2$, then
\begin{align}
\label{eq:J1ExB=frac2mn(4mn-11)-1}
J_{1}\ExB&=\frac{2\mn(4\mn-11)}{2\mn-1}\omega_{-1 } \ExB
+\frac{8\mn+5}{2\mn-1}\omega_{0 }^2 \ExB,\nonumber\\
\Har_{1}\ExB&=
\dfrac{2\mn}{2\mn-1}\omega_{-1 } \ExB
+\dfrac{-1}{2\mn-1}\omega_{0 }^2 \ExB
\end{align}
and if $\mn\neq 2,1/2$, then
\begin{align}
\label{eq:J1ExB=frac2mn(4mn-11)-2}
J_{0}\ExB&=\frac{2 (\mn-8) (2 \mn-1)}{\mn-2}\omega_{-2 } \ExB
+\frac{4 (4 \mn^2-\mn+4)}{(\mn-2) (2 \mn-1)}\omega_{0 } \omega_{-1 } \ExB
\nonumber\\&\quad{}
+\frac{-18}{(\mn-2) (2 \mn-1)}\omega_{0 }^3 \ExB\nonumber,\\
\Har_{0}\ExB&=
\dfrac{2\mn}{\mn-2}\omega_{-2 } \ExB
+\dfrac{-4\mn}{(2\mn-1)(\mn-2)}\omega_{0 } \omega_{-1 } \ExB
+\dfrac{2}{(2\mn-1)(\mn-2)}\omega_{0 }^3 \ExB.
\end{align}
If $\mn=2$, then by Lemma \ref{lemma:comm-change} and \eqref{eq:omega[1]0Har[1]0ExB=omega0} in \ref{section:appendix} 
for $m\geq 0$,
\begin{align}
\label{eq:[omegai(Har0ExB)j]}
&		[\omega_{i},(\Har_{0}\ExB)_{j}]\nonumber\\
		&=(3i-j)(\Har_{0}\ExB)_{i+j-1}\nonumber\\
		&\quad{}+8\binom{i}{2}\big( 
		\sum_{k\leq m}\omega_{k}\ExB_{i+j-3-k}+\sum_{k\geq m+1}\ExB_{i+j-3-k}\omega_{k }\nonumber\\
&\qquad{}+((m+1)(i+j-3)-\binom{m+1}{2})\ExB_{i+j-4}\big)\nonumber\\
		&\quad{}-2\binom{i}{2}(i+j-2)(i+j-3) \ExB_{i+j-4}\nonumber\\
		&\quad{}-6\binom{i}{3}(i+j-3)\ExB_{i+j-4}+12\binom{i}{4}\ExB_{i+j-4}.
\end{align}
If $\mn=1/2$, then by \eqref{eq:norm1-2-omega[1]0Har[1]1ExB} in \ref{section:appendix},
\begin{align}
\label{eq:[omegaiHar1ExBj]}
[\omega_{i},(\Har_{1}\ExB)_{j}]
&=(\frac{5}{4}i-j)(\Har_{1}\ExB)_{i+j-1}\nonumber\\
&\quad{}+\binom{i}{2}(i+j-2)\ExB_{i+j-3}+
\binom{i}{3}\ExB_{i+j-3}.
\end{align}
For $n\in\Z$, $m\in\Z_{\geq -1}$, and $k\in\Z_{<0}$,
using Lemma \ref{lemma:comm-change} and \eqref{eq:wiEji},
we expand each of $(\omega_{k}E)_{\wn}$ and $(\omega_{-1}^2E)_{n}$
so that 
the resulting expression is  a linear combination of elements of the form 
\begin{align}
\omega_{i_1}\cdots \omega_{i_r}\ExB_{l}\omega_{j_1}\cdots \omega_{j_s}
\end{align}
where $r,s\in\Z_{\geq 0}$, $l\in\Z$, 
$i_1,\ldots,i_r\leq m
$,
and $j_1,\ldots,j_s\geq m+1$
as follows:
\begin{align}
\label{eq:(omega-12E)wn-1}
&(\omega_{k}E)_{\wn}\nonumber\\
&=\sum_{i\leq m}\binom{-i-1}{-k-1}\omega_{i}E_{\wn+k-i}+\sum_{i\geq m+1}\binom{-i-1}{-k-1}E_{\wn+k-i}\omega_{i}\nonumber\\
&\quad{}+(-1)^{k}((-\wn-k)\binom{m-k}{-k}+
\binom{-k}{-k-1}\binom{m-k}{1-k}\dfrac{\mn}{2})E_{\wn+k-1}\nonumber\\
\end{align}
and
\begin{align}
	\label{eq:(omega-12E)wn-2}
&	(\omega_{-1}^2E)_{n}\nonumber\\
&=
\sum_{\begin{subarray}{l}i<0,j<0,\\k=n-i-j-2\end{subarray}}\omega_{i}\omega_{j}E_{k}
+	2\sum_{\begin{subarray}{l}i<0,0\leq j,\\k=n-i-j-2\end{subarray}}\omega_{i}E_{k}\omega_{j}
	+\sum_{\begin{subarray}{l}0\leq i,0\leq j,\\k=n-i-j-2\end{subarray}}E_{k}\omega_{j}\omega_{i}\nonumber\\
&=\sum_{\begin{subarray}{l}i<0,j<0,\\k=n-i-j-2\end{subarray}}\omega_{i}\omega_{j}E_{k}\nonumber\\
&\quad{}+	2\sum_{\begin{subarray}{l}i<0,0\leq j\leq m,\\k=n-i-j-2\end{subarray}}
\big(\omega_{i}\omega_{j}E_{k}
-((-1+\frac{\mn}{2})j-k)\omega_{i}E_{j+k-1}\big)+2\sum_{\begin{subarray}{l}i<0,m+1\leq j,\\k=n-i-j-2\end{subarray}}\omega_{i}E_{k}\omega_{j}\nonumber\\
&\quad{}+\sum_{\begin{subarray}{l}0\leq i,j\leq m,\\k=n-i-j-2\end{subarray}}\Big(
\omega_{j}\omega_{i}E_{k}+
((1-\frac{p}{2})j+k)((1-\frac{p}{2})i+j+k-1)E_{j+k-2}
\nonumber\\
&\qquad{}+((1-\frac{p}{2})j+k)\omega_{i}E_{j+k-1}+((1-\frac{p}{2})i+k)\omega_{j}E_{i+k-1}\Big)\nonumber\\
&\quad{}+\sum_{\begin{subarray}{l}m+1\leq i,0\leq j\leq m,\\k=n-i-j-2\end{subarray}}\big(((1-\frac{p}{2})j+k)E_{j+k-1}\omega_{i}+\omega_{j}E_{k}\omega_{i}\big)\nonumber\\
&\quad{}+(m+1)((1-\frac{p}{2})m+n-m-3)E_{n-4}+(m+1)\omega_{m}E_{n-m-3}\nonumber\\
&\quad{}+\sum_{\begin{subarray}{l}0\leq i\leq m,m+1\leq j\nonumber\\(i,j)\neq (m+1,0),\\k=n-i-j-2\end{subarray}}
E_{k}(j-i)\omega_{i+j-1}
+\sum_{\begin{subarray}{l}0\leq i\leq m,m+1\leq j,\nonumber\\k=n-i-j-2\end{subarray}}((1-\frac{p}{2})i+k)E_{i+k-1}\omega_{j}+\omega_{i}E_{k}\omega_{j}\big)\nonumber\\
&\quad{}+\sum_{\begin{subarray}{l}m+1\leq i,j,\\k=n-i-j-2\end{subarray}}E_{k}\omega_{j}\omega_{i}.
\end{align}
For $i\in\Z$ and a subset $X$ of a weak $M(1)^{+}$-module $\mK$,
$\langle\omega_i\rangle X$ denotes the subspace of $\mK$ spanned by the elements
$\omega_i^{j}\lu, j\in\Z_{\geq 0}, \lu\in X$. 

The following lemmas follow from \eqref{eq:wiwj}--\eqref{eq:wiEji}, \eqref{eq:[omegai(Har0ExB)j]},
and \eqref{eq:[omegaiHar1ExBj]}.

\begin{lemma}\label{lemma:ojzero}
Let $\lu$ be  
an element of a weak $M(1)^{+}$-module $(\mK,Y_{\mK})$ with $\epsilon(\omega,\lu)=\epsilon_{Y_{\mK}}(\omega,\lu)\geq 1$.
\begin{enumerate}
\item For any $i\geq 0$, $j>\epsilon(\omega,\lu)$, and $k\geq 2$, 
\begin{align}
\omega_{j}\omega_{\epsilon(\omega,\lu)}^{i}\lu&=
\omega_{j}\omega_{\epsilon(\omega,\lu)}^{i}J_{\epsilon(J,\lu)}\lu=0,\nonumber\\
\omega_{k}^{i}J_{\epsilon(J,\lu)}\lu&=J_{\epsilon(J,\lu)}\omega_{k}^{i}\lu.
\end{align}
\item
For any $\lv\in \langle\omega_{\epsilon(\omega,\lu)}\rangle\{\lu,J_{\epsilon(J,\lu)}\lu\}$,
$\epsilon(\omega,\lv)\leq \epsilon(\omega,\lu)$. 
\end{enumerate}
\end{lemma}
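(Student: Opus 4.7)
The plan is to prove both parts by induction, using only the commutation relations \eqref{eq:wiwj} and \eqref{eq:wiJj}. Set $s = \epsilon(\omega, \lu) \ge 1$ and $t = \epsilon(J, \lu)$. Throughout, the central term in $[\omega_i, \omega_j]$ will contribute nothing because the index sums we encounter always exceed $2$.

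First I would prove $\omega_j \omega_s^i \lu = 0$ for $j > s$ by induction on $i$. The base case $i = 0$ is the definition of $\epsilon(\omega, \lu)$. For the inductive step, use $[\omega_j, \omega_s] = (j-s)\omega_{j+s-1}$ to commute $\omega_j$ past the leading $\omega_s$; this produces two terms with leading index $j$ and $j+s-1$ respectively, both strictly greater than $s$ because $j > s$ and $s \ge 1$, so the inductive hypothesis annihilates them.

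The analogous vanishing $\omega_j \omega_s^i J_t \lu = 0$ for $j > s$ reduces similarly to the base case, where \eqref{eq:wiJj} yields $\omega_j J_t \lu = J_t \omega_j \lu + (3j-t) J_{j+t-1} \lu$; the first summand vanishes since $j > s$, the second since $j+t-1 > t$ (as $j \ge 2$). For the commutation identity $\omega_k^i J_t \lu = J_t \omega_k^i \lu$ with $k \ge 2$, I would first establish an auxiliary claim: $J_n \omega_k^j \lu = 0$ for all $n > t$ and $j \ge 0$. This also goes by induction on $j$: from $J_n \omega_k = \omega_k J_n + (n-3k) J_{n+k-1}$, we get $J_n \omega_k^j \lu = \omega_k J_n \omega_k^{j-1} \lu + (n-3k) J_{n+k-1} \omega_k^{j-1} \lu$, and both summands vanish by the inductive hypothesis, the second one precisely because $k \ge 2$ forces $n+k-1 > t$. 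Granting this, the commutation identity follows by a second induction on $i$: $\omega_k^i J_t \lu = \omega_k J_t \omega_k^{i-1} \lu = J_t \omega_k^i \lu + (3k-t) J_{k+t-1} \omega_k^{i-1} \lu$, where the correction term vanishes by the auxiliary claim since $k+t-1 > t$.

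Part (2) is then an immediate consequence: every $\lv \in \langle \omega_s \rangle \{\lu, J_t \lu\}$ is a linear combination of vectors $\omega_s^i \lu$ and $\omega_s^i J_t \lu$, so the two vanishing results above give $\omega_j \lv = 0$ for every $j > s$, which is exactly $\epsilon(\omega, \lv) \le s$. The only mildly delicate step is the auxiliary claim on $J_n \omega_k^j \lu$, where one must verify that the hypothesis $k \ge 2$ is precisely what is needed to keep $n+k-1$ above the vanishing threshold $t$ throughout the induction; the remainder is a routine iteration of the two bracket formulas.
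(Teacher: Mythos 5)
Your proof is correct and follows exactly the route the paper intends: the paper gives no written proof, stating only that the lemma "follows from" the commutation relations \eqref{eq:wiwj} and \eqref{eq:wiJj}, and your inductions (including the auxiliary claim $J_{n}\omega_{k}^{j}\lu=0$ for $n>\epsilon(J,\lu)$, where $k\geq 2$ keeps the shifted index $n+k-1$ above the threshold) are the routine verifications being left to the reader. The observation that the central term of \eqref{eq:wiwj} never contributes because $j+s\geq 3$ is the one point worth making explicit, and you made it.
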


\begin{lemma}\label{lemma:EBasis-zero}
Let $\lu\in\module$ with $\epsilon(\omega,\lu)=\epsilon_{I}(\omega,\lu)\geq 1$
and let $a$ be one of $E,\Har_{0}E$, or $\Har_{1}E$.
\begin{enumerate}
\item For any $i\geq 0$, $j>\epsilon(\omega,\lu)$, and $k\geq 2$, 
\begin{align}
\omega_{j}\omega_{\epsilon(\omega,\lu)}^{i}a_{\epsilon(a,\lu)}\lu&=0,\nonumber\\
\omega_{k}^{i}a_{\epsilon(a,\lu)}\lu&=a_{\epsilon(a,\lu)}\omega_{k}^{i}\lu.
\end{align}
\item
For any $\lv\in \langle\omega_{\epsilon(\omega,\lu)}\rangle a_{\epsilon(a,\lu)}\lu$,
$\epsilon(\omega,\lv)\leq\epsilon(\omega,\lu)$. 
\end{enumerate}
\end{lemma}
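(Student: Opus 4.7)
The plan is to mimic the proof of Lemma \ref{lemma:ojzero}, writing $\epsilon=\epsilon(\omega,\lu)$ for brevity. Part (2) will follow at once from part (1): every $\lv\in\langle\omega_\epsilon\rangle a_{\epsilon(a,\lu)}\lu$ is a linear combination of vectors $\omega_\epsilon^i a_{\epsilon(a,\lu)}\lu$, each of which is annihilated by $\omega_j$ for every $j>\epsilon$ by the first equation in (1); this says precisely that $\epsilon(\omega,\lv)\leq\epsilon$.

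The first equation in (1) I will prove by induction on $i$. The inductive step is purely formal from the Virasoro relation \eqref{eq:wiwj}: expanding
\begin{align*}
\omega_j\omega_\epsilon^i a_{\epsilon(a,\lu)}\lu=\omega_\epsilon\bigl(\omega_j\omega_\epsilon^{i-1}a_{\epsilon(a,\lu)}\lu\bigr)+(j-\epsilon)\omega_{j+\epsilon-1}\omega_\epsilon^{i-1}a_{\epsilon(a,\lu)}\lu,
\end{align*}
the central contribution $\frac{j(j-1)(j-2)}{12}\delta_{j+\epsilon-2,0}$ in $[\omega_j,\omega_\epsilon]$ vanishes because $j>\epsilon\geq 1$ forces $j+\epsilon-2>0$, and both right-hand terms then vanish by the inductive hypothesis (note $j+\epsilon-1>\epsilon$ for the second).

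What remains is the base case $i=0$: I must show $[\omega_j,a_{\epsilon(a,\lu)}]\lu=0$ for $j>\epsilon$. For $a=\ExB$, \eqref{eq:wiEji} makes this commutator a scalar multiple of $\ExB_{j+\epsilon(\ExB,\lu)-1}\lu$, which vanishes because $j\geq 2$ gives $j+\epsilon(\ExB,\lu)-1>\epsilon(\ExB,\lu)$. For $a=\Har_0\ExB$ ($\mn=2$) and $a=\Har_1\ExB$ ($\mn=1/2$), I will apply \eqref{eq:[omegai(Har0ExB)j]}, respectively \eqref{eq:[omegaiHar1ExBj]}, with the free parameter $m$ set equal to $\epsilon$. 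The leading $(\Har_0\ExB)_{j+\epsilon(a,\lu)-1}\lu$ (resp.\ $(\Har_1\ExB)_{j+\epsilon(a,\lu)-1}\lu$) vanishes for the usual reason, and the tail sum $\sum_{k\geq m+1}\ExB_{\cdot}\omega_k$ annihilates $\lu$ since $\omega_k\lu=0$ for $k>\epsilon$; the residual terms, namely the partial sum $\sum_{k=0}^{m}\omega_k\ExB_{\cdot}\lu$ and the explicit scalar multiples of $\ExB_{j+\epsilon(a,\lu)-4}\lu$ (resp.\ $\ExB_{j+\epsilon(a,\lu)-3}\lu$), must then combine to zero -- a polynomial identity in $j$, $\epsilon$, $\epsilon(a,\lu)$, and the coefficients extracted from \eqref{eq:wiEji}, to be verified by direct computation.

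The second equation in (1) will follow by the same pattern of induction on $i$: for $k\geq 2$ the commutator $[\omega_k,a_{\epsilon(a,\lu)}]$ acts as zero on $\omega_k^{i-1}\lu$ by the very same three-case analysis, since $k\geq 2$ is precisely what guarantees $k+\epsilon(a,\lu)-1>\epsilon(a,\lu)$ and hence the leading $a$-term vanishes. The main obstacle is the verification of the polynomial identity in the $\Har_0\ExB$ and $\Har_1\ExB$ base cases; as is customary for such computations in this paper, it will be checked with the aid of Risa/Asir.
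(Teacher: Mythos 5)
Your skeleton is the right one, and it is what the paper intends (the paper offers no written proof beyond citing \eqref{eq:wiwj}--\eqref{eq:wiEji}, \eqref{eq:[omegai(Har0ExB)j]}, and \eqref{eq:[omegaiHar1ExBj]}): part (2) does follow from the first display of (1), the induction on $i$ via $[\omega_j,\omega_{\epsilon}]$ is clean (the central term indeed dies because $j+\epsilon-2>0$), and the base case for $a=\ExB$ is complete since $E$ is primary, so $[\omega_j,\ExB_n]$ is a scalar times $\ExB_{j+n-1}$.

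The gap is in the base case for $a=\Har_0\ExB$ and $a=\Har_1\ExB$. These are \emph{not} primary, and the residual terms you propose to cancel by ``a polynomial identity'' do not cancel formally. Take $a=\Har_1\ExB$ ($\mn=1/2$), write $s=\epsilon(a,\lu)$, and put $j=\epsilon+1$ in \eqref{eq:[omegaiHar1ExBj]}: after the leading $(\Har_1\ExB)_{j+s-1}\lu$ term dies you are left with
\begin{align*}
\omega_{\epsilon+1}(\Har_1\ExB)_{s}\lu=\Bigl(\tbinom{\epsilon+1}{2}(\epsilon+s-1)+\tbinom{\epsilon+1}{3}\Bigr)\ExB_{\epsilon+s-2}\lu,
\end{align*}
a single nonzero scalar times a single vector; there is nothing else in the formula for it to cancel against. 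Its vanishing can only come from $\ExB_{\epsilon+s-2}\lu=0$, i.e.\ from the inequality $\epsilon(\ExB,\lu)<\epsilon(\omega,\lu)+\epsilon(\Har_1\ExB,\lu)-2$, which relates the three $\epsilon$'s and is nowhere established in your argument. The situation for $\Har_0\ExB$ is worse: besides the scalar multiples of $\ExB_{j+s-4}\lu$ you are left with the genuinely operator-valued sum $8\binom{j}{2}\sum_{k\le \epsilon}\omega_{k}\ExB_{j+s-3-k}\lu$ (note this sum runs over all $k\le m$, including $k<0$, not $k=0,\dots,m$ as you wrote), and the vectors $\omega_k\ExB_{l}\lu$ and $\ExB_{l'}\lu$ are in general linearly independent, so no identity in $j$, $\epsilon$, $s$ and the structure constants can force the combination to vanish. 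What is actually needed, and missing, is an a priori comparison of $\epsilon(a,\lu)$ with $\epsilon(\ExB,\lu)$ and $\epsilon(\omega,\lu)$ guaranteeing that every surviving $\ExB$-mode index exceeds the relevant $\epsilon(\ExB,\cdot)$; such a bound can be extracted from the same commutator identities evaluated with both indices above their thresholds, or from the relations expressing $\omega_0a$ through $E$ (cf.\ Lemma \ref{lemma:bound-H0E} and \eqref{eq:norm1-2-1}), but it is an additional argument, not a computation. The same omission affects your inductive step for the second display of (1), where the commutator must annihilate $\omega_k^{i-1}\lu$ rather than $\lu$, so one must also control $\epsilon(\ExB,\omega_k^{i-1}\lu)$.
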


By using the commutation relation $[\Har_{i},\ExB_{j}]=\sum_{k=0}^{\infty}\binom{i}{k}(\Har_{k}\ExB)_{i+j-k}$
for $i,j\in\Z$,
the following result follows from
Lemma \ref{lemma:comm-change} and \eqref{eq:Har1ExB=dfrac2mn2mn1omega1ExB}--\eqref{eq:J1ExB=frac2mn(4mn-11)-2}.
\begin{lemma}\label{lemma:bound-H0E}
Assume $\mn\neq 1/2$.
Let $\lu\in \mW$ with $\epsilon(\omega,\lu)\geq 1$
and $\epsilon(\Har,\lu)\leq 2\epsilon(\omega,\lu)+1$.
Then, $\epsilon(\Har_{0}\ExB,\lu)\leq \epsilon(\ExB,\lu)+2\epsilon(\omega,\lu)+1$.
\end{lemma}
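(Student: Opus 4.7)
The plan is to apply the commutation relation $[\Har_{i},\ExB_{j}]=\sum_{k\geq 0}\binom{i}{k}(\Har_{k}\ExB)_{i+j-k}$ to $\lu$, exploit that the sum truncates at $k=2$ by \eqref{eq:Har1ExB=dfrac2mn2mn1omega1ExB}, and then bound each of the three surviving terms by means of the explicit expressions in \eqref{eq:Har1ExB=dfrac2mn2mn1omega1ExB}--\eqref{eq:J1ExB=frac2mn(4mn-11)-2}. Concretely, the idea is to choose $i$ and $j$ so that $\Har_{i}\lu=0$ and $\ExB_{j}\lu=0$, which collapses the identity to an expression for $(\Har_{0}\ExB)_{n}\lu$, with $n=i+j$, in terms of $(\Har_{1}\ExB)_{n-1}\lu$ and $(\Har_{2}\ExB)_{n-2}\lu$.

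Set $\wor=\epsilon(\omega,\lu)$ and $\ws=\epsilon(\ExB,\lu)$. First I would establish the auxiliary bounds $\epsilon(\omega_{-1}\ExB,\lu)\leq\ws+\wor+1$ and $\epsilon(\omega_{-2}\ExB,\lu)\leq\ws+\wor+2$. Using Lemma \ref{lemma:comm-change} with $m=-1$, each $(\omega_{k}\ExB)_{n}\lu$ splits into terms $\omega_{i}\ExB_{n-k-i}\lu$ with $i\leq -1$, which vanish because $\ExB_{m}\lu=0$ for $m>\ws$, and terms $\ExB_{n-k-i}\omega_{i}\lu$ with $0\leq i\leq\wor$; in the latter, \eqref{eq:wiEji} rewrites $\ExB_{n-k-i}\omega_{i}\lu$ as $\omega_{i}\ExB_{n-k-i}\lu$ plus a scalar multiple of $\ExB_{n-k-2}\lu$, both of which vanish once $n$ exceeds the claimed threshold (using $\wor\geq 1$). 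The derivation identity $(\omega_{0}a)_{n}=-na_{n-1}$ then yields $(\omega_{0}\omega_{-1}\ExB)_{n}\lu=-n(\omega_{-1}\ExB)_{n-1}\lu$ and $(\omega_{0}^{3}\ExB)_{n}\lu=-n(n-1)(n-2)\ExB_{n-3}\lu$.

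For $\mn\neq 2$, substituting these bounds into the explicit decomposition \eqref{eq:J1ExB=frac2mn(4mn-11)-2} of $\Har_{0}\ExB$ immediately gives $\epsilon(\Har_{0}\ExB,\lu)\leq\max(\ws+\wor+2,\ws+3)=\ws+\wor+2$, which is $\leq\ws+2\wor+1$ since $\wor\geq 1$. For $\mn=2$, where \eqref{eq:J1ExB=frac2mn(4mn-11)-2} is singular, I instead take $i=\epsilon(\Har,\lu)+1\leq 2\wor+2$ and $j>\ws$ so that $\Har_{i}\lu=\ExB_{j}\lu=0$; the commutation relation then collapses to
\[
(\Har_{0}\ExB)_{n}\lu=-i(\Har_{1}\ExB)_{n-1}\lu-\binom{i}{2}(\Har_{2}\ExB)_{n-2}\lu,
\]
and the right-hand side vanishes for $n>\ws+2\wor+1$ by the auxiliary bounds combined with \eqref{eq:Har1ExB=dfrac2mn2mn1omega1ExB} and \eqref{eq:J1ExB=frac2mn(4mn-11)-1} (both still valid at $\mn=2$).

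The main obstacle I anticipate is the borderline case $n=\ws+2\wor+2$ when $\mn=2$ and $\epsilon(\Har,\lu)$ attains the maximum allowed value $2\wor+1$: the smallest admissible $i$ is then $2\wor+2$, forcing $j=\ws$ and leaving the contribution $\Har_{i}\ExB_{\ws}\lu$ uncontrolled by the direct argument, so the naive bound falls one unit short. I expect this gap to be closed by invoking the companion identity \eqref{eq:[omegai(Har0ExB)j]}, which at $\mn=2$ expresses $[\omega_{i},(\Har_{0}\ExB)_{j}]$ as $(3i-j)(\Har_{0}\ExB)_{i+j-1}$ plus explicit $\ExB$-terms; choosing $i>\wor$ kills $(\Har_{0}\ExB)_{j}\omega_{i}\lu$, and the nonvanishing factor $3i-j$ propagates the already-established vanishing of $(\Har_{0}\ExB)_{m}\lu$ for $m\geq\ws+2\wor+3$ down to the borderline value $n=\ws+2\wor+2$.
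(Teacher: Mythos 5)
Your treatment of the case $\mn\neq 2$ (writing $\Har_{0}\ExB$ via \eqref{eq:J1ExB=frac2mn(4mn-11)-2} and bounding $(\omega_{-2}\ExB)_{n}\lu$, $(\omega_{0}\omega_{-1}\ExB)_{n}\lu$ and $(\omega_{0}^{3}\ExB)_{n}\lu$ through Lemma \ref{lemma:comm-change}) is correct and is exactly the route the paper indicates, as is your reduction of the case $\mn=2$ to the truncated commutator $[\Har_{i},\ExB_{j}]=(\Har_{0}\ExB)_{i+j}+i(\Har_{1}\ExB)_{i+j-1}+\binom{i}{2}(\Har_{2}\ExB)_{i+j-2}$ together with \eqref{eq:Har1ExB=dfrac2mn2mn1omega1ExB} and \eqref{eq:J1ExB=frac2mn(4mn-11)-1}. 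You have also correctly isolated the one delicate point: when $\mn=2$ and $\epsilon(\Har,\lu)$ equals $2\epsilon(\omega,\lu)+1$, the choice $i=\epsilon(\Har,\lu)+1$, $j\geq\epsilon(\ExB,\lu)+1$ only reaches $n\geq\epsilon(\ExB,\lu)+2\epsilon(\omega,\lu)+3$, leaving the single value $n=\epsilon(\ExB,\lu)+2\epsilon(\omega,\lu)+2$ open.

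The proposed repair of that borderline value, however, does not close the gap. Write $r=\epsilon(\omega,\lu)$, $s=\epsilon(\ExB,\lu)$, $N=s+2r+2$. Applying \eqref{eq:[omegai(Har0ExB)j]} with $i+j-1=N$ and the internal parameter $m=r$ gives
\begin{align*}
\omega_{i}(\Har_{0}\ExB)_{j}\lu-(\Har_{0}\ExB)_{j}\omega_{i}\lu&=(3i-j)(\Har_{0}\ExB)_{N}\lu
\end{align*}
(the explicit $\ExB$-terms do all vanish on $\lu$ here), and taking $i>r$ does kill $(\Har_{0}\ExB)_{j}\omega_{i}\lu$; but the term $\omega_{i}(\Har_{0}\ExB)_{j}\lu$ survives, because $j=s+2r+3-i\leq s+r+2$ lies below the range in which $(\Har_{0}\ExB)_{j}\lu$ is already known to vanish. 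To discard it you would need $i>\epsilon(\omega,(\Har_{0}\ExB)_{j}\lu)$, and the only bound on that quantity available from the same identity combined with $(\Har_{0}\ExB)_{m}\lu=0$ for $m\geq s+2r+3$ is $\epsilon(\omega,(\Har_{0}\ExB)_{j}\lu)\leq\max(r,\,s+2r+3-j)=i$, which is exactly one unit short of strict. Choosing instead $j\geq s+2r+3$ so that $(\Har_{0}\ExB)_{j}\lu=0$ forces $i\leq 0$, and then $(\Har_{0}\ExB)_{j}\omega_{i}\lu$ is uncontrolled. So the descent is circular at precisely the unit you need to gain; equivalently, the identity $(\Har_{0}\ExB)_{N}\lu=\Har_{2r+2}\ExB_{s}\lu=-\ExB_{s+1}\Har_{2r+1}\lu$ is all one gets, and some genuinely new input bounding $\epsilon(\Har,\ExB_{s}\lu)$ (or $\epsilon(\ExB,\Har_{2r+1}\lu)$) is needed to finish the case $\mn=2$, $\epsilon(\Har,\lu)=2r+1$.
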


If $\mn=1/2$, then a direct computation shows that
\begin{align}
0&=\omega_{-1 } E-\omega_{0 }^2 E,\label{eq:norm1-2-0}\\
\Har_{0}E&=\frac{-2}{3}\omega_{-2 } E+\frac{4}{3}\omega_{0 }(\Har_{1}E),\label{eq:norm1-2-1}\\
0&=
8\omega_{-3 } E+12\Har_{-1 } E
+3\omega_{-1 } (\Har_{1}E)+4\omega_{0 } \omega_{-2 } E
-11\omega_{0 }^2(\Har_{1}E).
\label{eq:norm1-2-2}
\end{align}
\begin{lemma}
\label{lemma:bound-H0-1E}
Let $U$ be an $A(M(1)^{+})$-submodule of $\Omega_{M(1)^{+}}(\mW)$,
$\lu\in U$, and $\lE\in\Z$ such that $\epsilon(\ExB,\lv)\leq \lE$ for all $\lv\in U$. 
Then $\epsilon(\Har_{0}\ExB,\lu)\leq \lE+3$ and 
$\epsilon(\Har_{1}\ExB,\lu)\leq \lE+2$.
\end{lemma}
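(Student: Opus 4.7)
The plan is to exploit the three structural relations \eqref{eq:norm1-2-0}--\eqref{eq:norm1-2-2} that hold only when $\mn=1/2$, and to reduce $(\Har_0\ExB)_n u$ and $(\Har_1\ExB)_n u$ for $n$ above the claimed thresholds to linear combinations of $\ExB_j$-modes acting on $u$, on $L(-1)u=\omega_0 u$, or on Zhu-translates $\omega_1^{a}\Har_3^{b}u\in U$. The uniform hypothesis $\epsilon(\ExB,v)\le\lE$ for $v\in U$, together with the immediate consequence $\epsilon(\ExB,L(-1)u)\le\lE+1$ (from $[L(-1),\ExB_j]=-j\ExB_{j-1}$ and $\ExB_j u=0$ for $j>\lE$), then forces the desired vanishing.

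First I would prove the auxiliary bound $\epsilon(\omega_{-2}\ExB,u)\le\lE+3$. Relation \eqref{eq:norm1-2-0} combined with the $L(-1)$-derivation identity $\omega_0(\omega_{-1}\ExB)=\omega_{-2}\ExB+\omega_{-1}\omega_0\ExB$ yields $\omega_{-2}\ExB=\omega_0^3\ExB-\omega_{-1}\omega_0\ExB$. The term $(\omega_0^3\ExB)_n u=-6\binom{n}{3}\ExB_{n-3}u$ vanishes for $n>\lE+3$, and a normal-ordered expansion of $(\omega_{-1}\omega_0\ExB)_n u$, using $\omega_i u=0$ for $i\ge 2$, reduces to pieces of the three types just described, all vanishing in the same range. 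An analogous argument gives $\epsilon(\omega_{-3}\ExB,u)\le\lE+4$.

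Relation \eqref{eq:norm1-2-1} at the mode level reads $(\Har_0\ExB)_n u=-\tfrac{2}{3}(\omega_{-2}\ExB)_n u-\tfrac{4n}{3}(\Har_1\ExB)_{n-1}u$, so the two desired bounds are equivalent once the auxiliary bound is in hand. To prove $\epsilon(\Har_1\ExB,u)\le\lE+2$, I would take $(\cdot)_n$ on $u$ in \eqref{eq:norm1-2-2}, compute $(\Har_{-1}\ExB)_n u$ directly from the normal-ordered product expansion (the sum is supported on $i\le 3$ since $\Har_{\ge 4}u=0$, and the inner commutators $[\Har_i,\ExB_j]$ reduce via \eqref{eq:Har1ExB=dfrac2mn2mn1omega1ExB} and $\Har_3 u\in U$ to explicit combinations of $(\Har_0\ExB)_{n-1}u$ and $(\Har_1\ExB)_{n-2}u$), and similarly expand $(\omega_{-1}\Har_1\ExB)_n u$. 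Substituting these expansions into \eqref{eq:norm1-2-2} and applying the auxiliary bounds produces, at the hypothetical maximum $N=\epsilon(\Har_1\ExB,u)$, a single equation in $(\Har_1\ExB)_N u$ with nonvanishing coefficient, forcing $N\le\lE+2$.

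The main obstacle is the coupled appearance of $\Har_{-1}\ExB$, $\Har_0\ExB$, and $\Har_1\ExB$ in \eqref{eq:norm1-2-2}. Resolving it requires (i) the direct expansion of $(\Har_{-1}\ExB)_n u$ via commutators, exploiting that $\Har_i u$ for $i\ge 4$ vanishes and that $\Har_3 u\in U$, and (ii) careful bookkeeping of the normal-ordered cross terms $\omega_i(\Har_1\ExB)_{n-1-i}u$ with $i\le -1$ in $(\omega_{-1}\Har_1\ExB)_n u$, which at the maximum index $N$ collapse because $(\Har_1\ExB)_{N+1-i}u=0$ for such $i$. The bookkeeping is computationally intricate and, as elsewhere in the paper, is best handled with computer-algebra assistance; the shifts $+3$ and $+2$ in the final bounds track how $\omega_0=L(-1)$ raises the $\ExB$-annihilation index by one and how \eqref{eq:norm1-2-1} pairs $(\Har_0\ExB)_n$ with $(\Har_1\ExB)_{n-1}$.
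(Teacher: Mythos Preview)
Your proposal addresses only the case $\mn=1/2$, but the lemma is stated for arbitrary $\mn\neq 0$. The paper handles $\mn\neq 1/2$ in one line via Lemma~\ref{lemma:bound-H0E} (which, for $u\in\Omega_{M(1)^+}(\mW)$, gives $\epsilon(\Har_0\ExB,u)\le\lE+3$ since $\epsilon(\omega,u)\le 1$) together with \eqref{eq:J1ExB=frac2mn(4mn-11)-1}, which expresses $\Har_1\ExB$ as a combination of $\omega_{-1}\ExB$ and $\omega_0^2\ExB$. You should at least note this case.

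For $\mn=1/2$ the paper's route is much shorter than yours and avoids a genuine difficulty in your argument. Instead of invoking \eqref{eq:norm1-2-2}, the paper expands the commutator $[\Har_3,\ExB_j]u$ for $j>\lE$. Both $\Har_3\ExB_j u$ and $\ExB_j\Har_3 u$ vanish (the latter because $\Har_3 u\in U$, which is exactly where the $A(M(1)^+)$-submodule hypothesis enters). Expanding $[\Har_3,\ExB_j]=\sum_k\binom{3}{k}(\Har_k\ExB)_{j+3-k}$ and substituting \eqref{eq:Har1ExB=dfrac2mn2mn1omega1ExB} and \eqref{eq:norm1-2-1} leaves, after the $\omega_{-2}\ExB$ and $\ExB$ pieces die for $j>\lE$, the single term $-\tfrac{1}{3}(3+4j)(\Har_1\ExB)_{j+2}u=0$, giving $\epsilon(\Har_1\ExB,u)\le\lE+2$ directly. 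Then \eqref{eq:norm1-2-1} yields the bound for $\Har_0\ExB$.

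Your approach via \eqref{eq:norm1-2-2} has a gap at the step where you claim a ``single equation in $(\Har_1\ExB)_N u$''. When you expand $(\omega_{-1}(\Har_1\ExB))_{N+2}u$ (with either $m=-1$ or $m=1$ in Lemma~\ref{lemma:comm-change}), the $i=1$ contribution produces $(\Har_1\ExB)_N\,\omega_1 u$; since $[\omega_1,(\Har_1\ExB)_N]$ is a scalar multiple of $(\Har_1\ExB)_N$, this does not reduce to a multiple of $(\Har_1\ExB)_N u$. The element $\omega_1 u$ lies in $U$, so you know $\epsilon(\ExB,\omega_1 u)\le\lE$, but you have no control over $(\Har_1\ExB)_N\,\omega_1 u$ without already knowing $\epsilon(\Har_1\ExB,\omega_1 u)<N$. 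Your relation therefore reads $0=A\,(\Har_1\ExB)_N u+3\,(\Har_1\ExB)_N\,\omega_1 u$ for some scalar $A$, which does not force $(\Har_1\ExB)_N u=0$. One could try a descent over $U$, but this requires a uniform a~priori bound on $\epsilon(\Har_1\ExB,\cdot)$ over $U$, which is precisely what you are trying to prove. The paper's commutator trick sidesteps this entirely because $[\Har_3,\ExB_j]u$ vanishes identically for $j>\lE$, so no extraneous $\omega_1 u$ term appears.
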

\begin{proof}
For $\mn\neq 1/2$,
the result follows from
Lemma \ref{lemma:bound-H0E} and \eqref{eq:J1ExB=frac2mn(4mn-11)-1}.
Assume $\mn=1/2$.
For $i,j\in\Z$ and $r\in\Z_{\geq 0}$, it follows from 
\eqref{eq:Har1ExB=dfrac2mn2mn1omega1ExB}, \eqref{eq:norm1-2-1}, and Lemma \ref{lemma:comm-change} that
\begin{align}
\label{eq:[HariExBj]lu}
&[\Har_{i},\ExB_{j}]\lu\nonumber\\
&=(\Har_{0}\ExB)_{i+j}\lu+i(\Har_{1}\ExB)_{i+j-1}\lu+\binom{i}{2}(\Har_{2}\ExB)_{i+j-2}\lu\nonumber\\
&=\frac{-1}{3}(i+4j)(\Har_{1}E)_{i+j-1}\lu
-\binom{i}{2}\frac{i+j-2}{3}\ExB_{i+j-3}\lu\nonumber\\
&\quad{}+\frac{-2}{3}
\big(\sum_{k\leq \lom}(-k-1)\omega_{k}\ExB_{i+j-2-k}+\sum_{k\geq \lom+1}(-k-1)\ExB_{i+j-2-k}\omega_{k}\nonumber\\
&\qquad{}+(\binom{r+2}{2}(-i-j+2)+\binom{r+2}{3})E_{i+j-3}\big)\lu.
\end{align}
Using \eqref{eq:[HariExBj]lu} with $i=3$ and $r=1$,
we have $\epsilon(\Har_{1}\ExB,\lu)\leq \lE+2$.
By \eqref{eq:norm1-2-1}, $\epsilon(\Har_{0}\ExB,\lu)\leq \lE+3$.
\end{proof}

A direct computations shows the following result.
\begin{lemma}
\label{lemma:relations-M(1)-V(lattice)+}
The following elements of $V_{\lattice}^{+}$ are zero:
\begin{align}
\sv^{(8),H}&=
-2376\omega_{-2 } \omega_{-2 } \omega_{-1 } \vac
+3168\omega_{-3 } \omega_{-1 } \omega_{-1 } \vac
-6256\omega_{-3 } \omega_{-3 } \vac
-11799\omega_{-4 } \omega_{-2 } \vac
\nonumber\\&\quad{}
+30456\omega_{-5 } \omega_{-1 } \vac
+2310\omega_{-7 } \vac
-9504\omega_{-1 } \omega_{-1 } \Har_{-1 } \vac
-6024\omega_{-3 } \Har_{-1 } \vac
\nonumber\\&\quad{}
-13419\omega_{-2 } \Har_{-2 } \vac
-6516\omega_{-1 } \Har_{-3 } \vac
+11868\Har_{-5 } \vac+5040\Har_{-1 }^2 \vac,\\
\sv^{(8),J}&=
-29056\omega_{-1}^4\vac
-118960\omega_{-2}^2\omega_{-1}\vac+
39040\omega_{-3}\omega_{-1}^2\vac
-39480\omega_{-3}^2\vac\nonumber\\
&\quad{}
-32120\omega_{-4}\omega_{-2}\vac+
497760\omega_{-5}\omega_{-1}\vac+
230360\omega_{-7}\vac\nonumber\\
&\quad{}+
5024\omega_{-1}^2J_{-1}\vac
-8536\omega_{-3}J_{-1}\vac+
8939\omega_{-2}J_{-2}\vac\nonumber\\
&\quad{}
-2444\omega_{-1}J_{-3}\vac+
1572J_{-5}+560J_{-1}^2\vac,\\
\sv^{(9)}&=30J_{-6}\vac-30\omega_{-1}J_{-4}\vac+27\omega_{-2}J_{-3}\vac-39\omega_{-3}J_{-2}\vac\nonumber\\
&\quad{}+16\omega_{-1}^2J_{-2}\vac+52\omega_{-4}J_{-1}\vac-32\omega_{-2}\omega_{-1}J_{-1}\vac,
\end{align}
\begin{align}
\sv^{(10),H}&=
919328\omega_{-9 } \vac
-545856\omega_{-5 } \omega_{-1 } \omega_{-1 } \vac
\nonumber\\&\quad{}
-529536\omega_{-4 } \omega_{-4 } \vac
+545352\omega_{-4 } \omega_{-2 } \omega_{-1 } \vac
\nonumber\\&\quad{}
+520160\omega_{-3 } \omega_{-3 } \omega_{-1 } \vac
-524968\omega_{-3 } \omega_{-2 } \omega_{-2 } \vac
\nonumber\\&\quad{}
-10240\omega_{-3 } \omega_{-1 } \omega_{-1 } \omega_{-1 } \vac
+7680\omega_{-2 } \omega_{-2 } \omega_{-1 } \omega_{-1 } \vac
\nonumber\\&\quad{}
+1937712\omega_{-5 } \Har_{-1 } \vac
-845376\omega_{-3 } \omega_{-1 } \Har_{-1 } \vac
\nonumber\\&\quad{}
-381048\omega_{-2 } \omega_{-2 } \Har_{-1 } \vac
+30720\omega_{-1 } \omega_{-1 } \omega_{-1 } \Har_{-1 } \vac
\nonumber\\&\quad{}
-720081\omega_{-4 } \Har_{-2 } \vac
-128280\omega_{-2 } \omega_{-1 } \Har_{-2 } \vac
\nonumber\\&\quad{}
-435576\omega_{-3 } \Har_{-3 } \vac
+234528\omega_{-1 } \omega_{-1 } \Har_{-3 } \vac
\nonumber\\&\quad{}
+345849\omega_{-2 } \Har_{-4 } \vac
-1211160\omega_{-1 } \Har_{-5 } \vac
\nonumber\\&\quad{}
+2360970\Har_{-7 } \vac
+70875\Har_{-2 } \Har_{-2 } \vac
\nonumber\\&\quad{}
+734184\omega_{-7 } \omega_{-1 } \vac
+898766\omega_{-6 } \omega_{-2 } \vac,
\end{align}
\begin{align}
\sv^{(10),J}&=8192\omega_{-1 }^5\vac-2048\omega_{-1 }^{3}J_{-1 } \vac\nonumber\\
&\quad{}+758496\omega_{-9 } \vac
-1728\omega_{-5 } \omega_{-3 } \vac
\nonumber\\&\quad{}
-15232\omega_{-5 } \omega_{-1 } \omega_{-1 } \vac
-60848\omega_{-4 } \omega_{-4 } \vac
\nonumber\\&\quad{}
-134224\omega_{-4 } \omega_{-2 } \omega_{-1 } \vac
-6912\omega_{-3 } \omega_{-3 } \omega_{-1 } \vac
\nonumber\\&\quad{}
-136872\omega_{-3 } \omega_{-2 } \omega_{-2 } \vac
-112640\omega_{-3 } \omega_{-1 } \omega_{-1 } \omega_{-1 } \vac
\nonumber\\&\quad{}
-69280\omega_{-2 } \omega_{-2 } \omega_{-1 } \omega_{-1 } \vac
-6092\omega_{-4 } J_{-2 } \vac
\nonumber\\&\quad{}
+6272\omega_{-3 } \omega_{-1 } J_{-1 } \vac
+360\omega_{-2 } \omega_{-2 } J_{-1 } \vac
\nonumber\\&\quad{}
+152\omega_{-2 } \omega_{-1 } J_{-2 } \vac
+1856\omega_{-3 } J_{-3 } \vac
\nonumber\\&\quad{}
+9408\omega_{-1 } \omega_{-1 } J_{-3 } \vac
+12656\omega_{-2 } J_{-4 } \vac
\nonumber\\&\quad{}
-29968\omega_{-1 } J_{-5 } \vac
+43320J_{-7 } \vac
\nonumber\\&\quad{}
+525J_{-2 } J_{-2 } \vac
+1309248\omega_{-7 } \omega_{-1 } \vac
\nonumber\\&\quad{}
+352992\omega_{-6 } \omega_{-2 } \vac,
\end{align}
\begin{align}
\label{eq:big(2(mn-2)(-27 + 54mn - 44mn2+ 40mn3)omega-3-1}
Q^{(4)}
&=
2(\mn-2)(-27 + 54 \mn - 44 \mn^2 + 40 \mn^3)\omega_{-3 }\ExB
\nonumber\\&\quad{}
-12\mn (\mn-2) (-3 + 4 \mn)\omega_{-1 }^2\ExB
\nonumber\\&\quad{}
-6 \mn( \mn-2 )  (-9 + 2 \mn) (-1 + 2 \mn)\Har_{-1 }\ExB
\nonumber\\&\quad{}
+(-72\mn^3-96\mn^2+210\mn-90)\omega_{0 } \omega_{-2 }\ExB
\nonumber\\&\quad{}
+(120\mn^2-48\mn+36)\omega_{0 }^2\omega_{-1 }\ExB
\nonumber\\&\quad{}
+(-48\mn-9)\omega_{0 }^4\ExB,
\end{align}
\begin{align}
Q^{(5,1)}&=
3 (\mn-2) (10 \mn^2-29 \mn+32) (10 \mn^2-4 \mn+3)\omega_{-4 } E
\nonumber\\&\quad{}
-12 \mn (3 \mn-4) (10 \mn^2-4 \mn+3)\omega_{-2 } \omega_{-1 } E
\nonumber\\&\quad{}
-3 (\mn-8) (\mn-2) (2 \mn-1) (10 \mn^2-4 \mn+3)\Har_{-2 } E
\nonumber\\&\quad{}
+8 (2 \mn-7) (15 \mn^3-22 \mn^2+8 \mn-6)\omega_{0 } \omega_{-3 } E
\nonumber\\&\quad{}
+24 \mn^2 (8 \mn-9)\omega_{0 } \omega_{-1 } \omega_{-1 } E
\nonumber\\&\quad{}
-12 (\mn-2) (2 \mn-1) (6 \mn^2-5 \mn+6)\omega_{0 } \Har_{-1 } E
\nonumber\\&\quad{}
-6 (2 \mn^3-32 \mn^2+29 \mn+12)\omega_{0 } \omega_{0 } \omega_{-2 } E
\nonumber\\&\quad{}
-6 (8 \mn-9)\omega_{0 } \omega_{0 } \omega_{0 } \omega_{0 } \omega_{0 } E,\\
Q^{(5,2)}&=
3(\mn-2)(10\mn^2-29\mn+32)(12\mn^3+16\mn^2-35\mn+15)\omega_{-4 } \ExB
\nonumber\\&\quad{}
-12\mn(3\mn-4)(12\mn^3+16\mn^2-35\mn+15)\omega_{-2 } \omega_{-1 } \ExB
\nonumber\\&\quad{}
-3(\mn-8)(\mn-2)(2\mn-1)(12\mn^3+16\mn^2-35\mn+15)\Har_{-2 } \ExB
\nonumber\\&\quad{}
+2(136\mn^5-316\mn^4-1266\mn^3+3409\mn^2-2470\mn+624)\omega_{0 } \omega_{-3 } \ExB
\nonumber\\&\quad{}
+12\mn(20\mn^3-3\mn^2-44\mn+24)\omega_{0 } \omega_{-1 } \omega_{-1 } \ExB
\nonumber\\&\quad{}
-6(\mn-2)(2\mn-1)(14\mn^3+21\mn^2-74\mn+60)\omega_{0 } \Har_{-1 } \ExB
\nonumber\\&\quad{}
-12(2\mn^3-32\mn^2+29\mn+12)\omega_{0 } \omega_{0 } \omega_{0 } \omega_{-1 } \ExB
\nonumber\\&\quad{}
-3(16\mn^2+61\mn-102)\omega_{0 } \omega_{0 } \omega_{0 } \omega_{0 } \omega_{0 } \ExB,
\end{align}
\begin{align}
Q^{(6)}&=
 2 (3696 \mn^8-22564 \mn^7+66284 \mn^6-84937 \mn^5+56207 \mn^4
\nonumber\\&\qquad{}
-91528 \mn^3+11774 \mn^2+29190 \mn-13500)\omega_{-5 } E
\nonumber\\&\quad{}
-4 \mn (352 \mn^6+2152 \mn^5-8282 \mn^4+7951 \mn^3-11696 \mn^2\nonumber\\&\qquad{}
+6304 \mn-1542)\omega_{-3 } \omega_{-1 } E
\nonumber\\&\quad{}
-3 \mn (1584 \mn^6-5572 \mn^5+6456 \mn^4-6877 \mn^3+5214 \mn^2\nonumber\\&\qquad{}
-3040 \mn+642)\omega_{-2 } \omega_{-2 } E
\nonumber\\&\quad{}
+720 \mn^3 (\mn-2) (4 \mn-1)\omega_{-1 } \omega_{-1 } \omega_{-1 } E
\nonumber\\&\quad{}
-24 \mn (\mn-2) (2 \mn-1) (44 \mn^4-98 \mn^3+157 \mn^2-88 \mn+48)\omega_{-1 } \Har_{-1 } E
\nonumber\\&\quad{}
-3 (\mn-2) (2 \mn-25) (2 \mn-1)^2 (44 \mn^4-13 \mn^3+62 \mn^2-48 \mn+18)\Har_{-3 } E
\nonumber\\&\quad{}
+3 (1760 \mn^7-9382 \mn^6+1391 \mn^5+28130 \mn^4-14380 \mn^3\nonumber\\&\qquad{}
+29762 \mn^2-25851 \mn+7650)\omega_{0 } \omega_{-4 } E
\nonumber\\&\quad{}
+12 \mn (352 \mn^5-1459 \mn^4+2396 \mn^3-2894 \mn^2+1254 \mn-225)\omega_{0 } \omega_{-2 } \omega_{-1 } E
\nonumber\\&\quad{}
-3 (\mn-2) (2 \mn-1) (352 \mn^5+101 \mn^4+86 \mn^3-614 \mn^2+804 \mn-225)\omega_{0 } \Har_{-2 } E
\nonumber\\&\quad{}
+12 (88 \mn^6+1104 \mn^5-4136 \mn^4+3714 \mn^3-3944 \mn^2+2670 \mn-675)\omega_{0 } \omega_{0 } \omega_{-3 } E
\nonumber\\&\quad{}
-6 (352 \mn^5-1099 \mn^4+686 \mn^3-689 \mn^2+804 \mn-225)\omega_{0 } \omega_{0 } \omega_{0 } \omega_{-2 } E
\nonumber\\&\quad{}
-90 (\mn-2) (4 \mn-1)\omega_{0 } \omega_{0 } \omega_{0 } \omega_{0 } \omega_{0 } \omega_{0 } E.
\end{align}
If $\mn=2$, then we have the four following relations:
\begin{align}
0&=6\omega_{-2 } E
-4\omega_{0 } \omega_{-1 } E
+\omega_{0 }^{3}E,\label{eq:6omega2E4omega0omega1E-1}\\
0&=
180\omega_{-3 }E  -48\omega_{-1 }^2 E +72H_{-1 }E -63\omega_{0 } (H_{0}E)\nonumber\\&\quad{}
 +8\omega_{0 }^2\omega_{-1 } E +\omega_{0 }^4 E,
\label{eq:6omega2E4omega0omega1E-2}
\\
0&=
9450\omega_{-4 } E -900\omega_{-1 } (H_{0}E)+6750H_{-2 } E-768\omega_{0 } \omega_{-1 }^{2} E \nonumber\\&\quad{}
 -3168\omega_{0 } \Har_{-1 } E 
+297\omega_{0 }^{2} (H_{0}E) +128\omega_{0 }^{3}\omega_{-1 } E+16\omega_{0 }^{5}E,
\label{eq:6omega2E4omega0omega1E-3}\\
0&=
584199000\omega_{-6 } \ExB 
-117085500\Har_{-4 } \ExB 
\nonumber\\&\quad{}
+98941500\omega_{-3 } (\Har_{0}\ExB) 
-27594000\omega_{-1 }^2 (\Har_{0}\ExB) 
\nonumber\\&\quad{}
+34587000\Har_{-1 } (\Har_{0}\ExB) 
-13132800\omega_{0 } \omega_{-1 }^3 \ExB 
\nonumber\\&\quad{}
-60739200\omega_{0 } \omega_{-1 } \Har_{-1 } \ExB 
+277223400\omega_{0 } \Har_{-3 } \ExB 
\nonumber\\&\quad{}
-85188900\omega_{0 } \omega_{-2 } (\Har_{0}\ExB) 
+206053320\omega_{0 }^2 \omega_{-4 } \ExB 
\nonumber\\&\quad{}
-8524040\omega_{0 }^2 \omega_{-1 } (\Har_{0}\ExB) 
-27546608\omega_{0 }^3\omega_{-3 } \ExB 
\nonumber\\&\quad{}
-51990312\omega_{0 }^3\Har_{-1 } \ExB 
+17161013\omega_{0 }^4(\Har_{0}\ExB) 
\nonumber\\&\quad{}
-820800\omega_{0 }^5\omega_{-1 } \ExB 
+410400\omega_{0 }^7 \ExB.
\label{eq:6omega2E4omega0omega1E-4}
\end{align}
\renewcommand{\arraystretch}{2.0}
\renewcommand{\arraystretch}{1}
\end{lemma}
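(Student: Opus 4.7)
My plan is to verify each identity by direct computation in a suitable spanning set. The five elements $\sv^{(8),H}$, $\sv^{(8),J}$, $\sv^{(9)}$, $\sv^{(10),H}$, $\sv^{(10),J}$ lie entirely inside $M(1)^{+}$ and are $\mn$-independent. For these I would use the defining formulas \eqref{eq:definition-omega-J-H} for $\omega,\Har,J$ together with the commutation relations \eqref{eq:wiwj}--\eqref{eq:wiHj} and the identifications \eqref{eq:h(-2)h(-1)=omega0} to rewrite each monomial of the form $\omega_{-i_{1}}\cdots\omega_{-i_{m}}\Har_{-j_{1}}\cdots\Har_{-j_{n}}\vac$ (respectively with $J$'s in place of $\Har$'s) as an element of the Heisenberg Fock space $M(1)$ expressed in the PBW basis $\{h(-i_{1})\cdots h(-i_{k})\vac : i_{1}\ge\cdots\ge i_{k}\ge 1\}$. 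This relies only on the bracket $[h(i),h(j)]=i\delta_{i+j,0}$ in $\hat{\fh}$. The identity then reduces to checking that each PBW coefficient vanishes, which is a finite check at each fixed conformal weight.

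For $Q^{(4)}, Q^{(5,1)}, Q^{(5,2)}, Q^{(6)}$ and the four $\mn=2$ relations \eqref{eq:6omega2E4omega0omega1E-1}--\eqref{eq:6omega2E4omega0omega1E-4}, the vectors involve $\ExB=\ExB(\alpha)\in V_{\lattice}$. Here I would use \eqref{eq:wiEji}, \eqref{eq:J1E=frac2}, \eqref{eq:Har1ExB=dfrac2mn2mn1omega1ExB}, \eqref{eq:J1ExB=frac2mn(4mn-11)-1}, and \eqref{eq:J1ExB=frac2mn(4mn-11)-2} to reduce every $\omega_{i},\Har_{i},J_{i}$ action on $\ExB$ for $i\ge 0$ to iterated $\omega_{-j}$'s acting on $\ExB$, after which each monomial can be expressed in the PBW-type basis $\{h(-i_{1})\cdots h(-i_{k})e^{\pm\alpha}\}$ for the $\pm\alpha+\lattice$-graded pieces of $V_{\lattice}$. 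After this rewriting, each $Q^{(k)}$ becomes a rational function of $\mn$ (with the apparent poles at $\mn=1/2,2$ cancelling) taking values in the PBW basis, and the claim becomes that every coefficient is the zero polynomial. The four $\mn=2$ relations are obtained analogously but with $\mn=2$ substituted at the outset, which kills the denominators in \eqref{eq:J1ExB=frac2mn(4mn-11)-2} and produces the shorter closed-form identities.

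The main obstacle is combinatorial rather than conceptual: the largest identity $Q^{(6)}$ has polynomial coefficients of degree up to $8$ in $\mn$, and after fully expanding every term and comparing coefficients against the PBW basis one obtains a very large system of scalar equations. This is precisely the motivation for the author's use of Risa/Asir. Concretely, I would implement the reduction algorithm above in the computer algebra system, run it on each displayed identity, and record that the final expansion collapses to zero identically in $\mn$ (respectively at $\mn=2$). No new vertex-algebra-theoretic input is required beyond the commutation relations already compiled in \eqref{eq:wiwj}--\eqref{eq:J1ExB=frac2mn(4mn-11)-2}; what is required is bookkeeping on a scale that is only feasible by computer.
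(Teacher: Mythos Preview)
Your proposal is correct and matches the paper's approach exactly: the paper's proof consists of the single sentence ``A direct computation shows the following result,'' relying on the computer algebra system Risa/Asir, which is precisely the PBW-expansion-and-coefficient-check you describe.
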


\begin{lemma}\label{lemma:m1+wJ}
Let $\lu$ be a non-zero element of a weak $M(1)^{+}$-module $(\mK,Y_{\mK})$ such that
$\epsilon(\omega,\lu)=\epsilon_{Y_{\mK}}(\omega,\lu)\geq 2$ and  $\epsilon(\omega,\lu)\leq \epsilon(\omega,\lv)$ for 
all non-zero $\lv\in \mK$.
Then $\epsilon(J,\lu)=2\epsilon(\omega,\lu)+1$,
\begin{align}
\label{eq:J2e=4omegae}
J_{2\epsilon(\omega,\lu)+1}\lu&=4\omega_{\epsilon(\omega,\lu)}^2\lu,
\end{align}
and 
\begin{align}
\label{eq:epsilon(Har,lu)leq 2epsilon}
\epsilon(\Har,\lu)&\leq 2\epsilon(\omega,\lu).
\end{align} 
\end{lemma}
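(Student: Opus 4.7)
The plan is to combine the identity $J = -9\Har + 4\omega_{-1}^2\vac - 3\omega_{-3}\vac$ from \eqref{eq:definition-omega-J-H} with the Virasoro and $\Har$-commutation relations \eqref{eq:wiwj}--\eqref{eq:wiHj} and the singular vectors $\sv^{(8),H}$, $\sv^{(8),J}$, $\sv^{(9)}$, $\sv^{(10),H}$, $\sv^{(10),J}$ of Lemma \ref{lemma:relations-M(1)-V(lattice)+}. Write $e = \epsilon(\omega,\lu) \geq 2$ throughout; the hypothesis $e \geq 2$ serves only to rule out the Virasoro cocycle $\delta_{i+j-2,0}\,i(i-1)(i-2)/12$ in the range where we need to apply it.

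First I would reduce $J_n\lu$ modulo $\Har$-modes. Borcherds' formula gives $(\omega_{-3}\vac)_n = \binom{n}{2}\omega_{n-2}$, hence $(\omega_{-3}\vac)_n\lu = 0$ for $n \geq e+3$. Similarly, the normally-ordered expansion $(\omega_{-1}\omega)_n = \sum_{i\leq -1}\omega_i\omega_{n-1-i} + \sum_{i\geq 0}\omega_{n-1-i}\omega_i$, combined with $\omega_j\lu = 0$ for $j > e$ and the commutation \eqref{eq:wiwj} (whose cocycle term vanishes in this range because $e \geq 2$), yields $(\omega_{-1}\omega)_{2e+1}\lu = \omega_e^2\lu$ and $(\omega_{-1}\omega)_n\lu = 0$ for $n \geq 2e+2$. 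Substituting these into \eqref{eq:definition-omega-J-H} shows
\begin{align*}
J_{2e+1}\lu = -9\Har_{2e+1}\lu + 4\omega_e^2\lu, \qquad J_n\lu = -9\Har_n\lu \text{ for } n \geq 2e+2.
\end{align*}

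The heart of the argument, and the main obstacle, is to establish $\Har_j\lu = 0$ for every $j \geq 2e+1$. For this I would apply suitable modes of $\sv^{(9)}$, and if necessary also of $\sv^{(8),J}$, $\sv^{(10),H}$, $\sv^{(10),J}$, to $\lu$. Each summand of a singular vector is a normally-ordered polynomial in $\omega$, $\Har$ and $J$, and its $n$th mode can be expanded by iterated use of Lemma \ref{lemma:comm-change} together with \eqref{eq:wiwj}--\eqref{eq:wiHj} so as to push every $\omega_i$ with $i > e$ to the right, where it annihilates $\lu$. What remains lies in the $\C$-span of $\omega_e^a\Har_j^b\lu$ (and its $J$-variants) described in the introduction. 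Choosing the mode of $\sv^{(9)}$ so that the coefficient of $\Har_{2e+1}\lu$ in the resulting relation is non-zero for $e \geq 2$ (a numerical fact checked in the same spirit as Lemma \ref{lemma:relations-M(1)-V(lattice)+}) then forces $\Har_{2e+1}\lu = 0$; repeating with higher modes of $\sv^{(10),H}$ and $\sv^{(10),J}$ inductively kills $\Har_j\lu$ for every $j > 2e+1$. The technical crux is bookkeeping the rapid growth of terms in each expansion and verifying that these leading coefficients do not accidentally vanish.

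Finally, $\omega_e^2\lu \neq 0$: for any $i > e$ we have $\omega_i\omega_e\lu = [\omega_i,\omega_e]\lu = (i-e)\omega_{i+e-1}\lu$, since the cocycle $\delta_{i+e-2,0}$ requires $i = 2-e \leq 0$, which is excluded; and $i+e-1 > e$ gives $\omega_{i+e-1}\lu = 0$. Hence $\epsilon(\omega,\omega_e\lu) \leq e$, and the minimality hypothesis applied to the non-zero element $\omega_e\lu$ forces $\epsilon(\omega,\omega_e\lu) = e$, i.e.\ $\omega_e^2\lu \neq 0$. Combined with the preceding two steps, the displayed formulas above now give $J_n\lu = 0$ for $n > 2e+1$ and $J_{2e+1}\lu = 4\omega_e^2\lu \neq 0$, yielding both \eqref{eq:J2e=4omegae} and $\epsilon(J,\lu) = 2e+1$; the bound \eqref{eq:epsilon(Har,lu)leq 2epsilon} is precisely the content of the main step.
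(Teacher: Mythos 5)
Your opening reduction via $J=-9\Har+4\omega_{-1}^2\vac-3\omega_{-3}\vac$ (giving $J_{2e+1}\lu=-9\Har_{2e+1}\lu+4\omega_e^2\lu$ and $J_n\lu=-9\Har_n\lu$ for $n\geq 2e+2$) and your closing argument that $\omega_e^2\lu\neq 0$ are both correct and are exactly the ingredients the paper uses. The gap is in what you yourself call the heart of the argument: the claim that one can choose a mode of $\sv^{(9)}$ so that $\Har_{2e+1}\lu$ appears in the resulting relation with a non-zero coefficient, forcing $\Har_{2e+1}\lu=0$. No mode of $\sv^{(9)}$ does this. Every summand of $\sv^{(9)}$ has the form $\omega_{-i_1}\cdots\omega_{-i_k}J_{-j}\vac$, so after expanding $\sv^{(9)}_n\lu$ and pushing the high $\omega$-modes to the right, the extremal surviving term always carries the factor $\omega_e^2$ in front of a $J$-mode. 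Concretely, writing $s=\epsilon(J,\lu)$ (an integer by \eqref{eq:epsilonI(u,v)inZ}, using simplicity of $M(1)^{+}$ --- a point your route also needs), all modes $n>s+2e+3$ act as $0$ for trivial reasons, the mode $n=s+2e+3$ yields
\begin{align*}
0=\tfrac{1}{16}\sv^{(9)}_{s+2e+3}\lu=(2e+1-s)\,\omega_e^{2}J_{s}\lu,
\end{align*}
and lower modes produce uncontrolled terms $\omega_i(\cdots)\lu$ with $i<e$. The content of the displayed relation is the determination of $s$, not the vanishing of any $\Har$-mode: since $\omega_e^{2}J_s\lu\neq 0$ (this requires the analogue of your last paragraph with $J_s\lu$ in place of $\lu$, i.e.\ Lemma \ref{lemma:ojzero}, which you do not establish), it forces $s=2e+1$, and hence $\Har_j\lu=-\tfrac19 J_j\lu=0$ only for $j\geq 2e+2$.

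The remaining case $j=2e+1$ genuinely cannot be extracted from $\sv^{(9)}$: by your own reduction $\Har_{2e+1}\lu=\tfrac19\bigl(4\omega_e^{2}\lu-J_{2e+1}\lu\bigr)$, so killing it is \emph{equivalent} to the identity \eqref{eq:J2e=4omegae}, which is an independent fact. The paper obtains it from the extremal mode of the degree-$10$ singular vector, $0=\sv^{(10),J}_{5e+4}\lu=2048\,\omega_e^{3}\bigl(4\omega_e^{2}-J_{2e+1}\bigr)\lu$ (only the $\omega_{-1}^5\vac$ and $\omega_{-1}^3J_{-1}\vac$ summands contribute at this mode), again combined with the non-vanishing of $\omega_e$-powers on $\langle\omega_e\rangle\{\lu,J_{2e+1}\lu\}$. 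So the workable logical order is the reverse of yours: the singular vectors first pin down $\epsilon(J,\lu)$ and then $J_{2e+1}\lu$, and \eqref{eq:epsilon(Har,lu)leq 2epsilon} falls out of $J=-9\Har+4\omega_{-1}^2\vac-3\omega_{-3}\vac$ at the end; there is no induction on $j$ to run. As written, your central step is an unverified assertion whose proposed mechanism does not match what these relations can deliver.
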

\begin{proof}
We write 
\begin{align}
\lao&=\epsilon(\omega,\lu)\mbox{ and }\laJ=\epsilon(J,\lu)
\end{align}
for simplicity. We note that $\laJ\in\Z$ by \eqref{eq:epsilonI(u,v)inZ}.
It follows from Lemma \ref{lemma:ojzero} (2) and the condition of $\lu$ that for any non-zero $\lv\in \langle\omega_{\lao}
\rangle\{\lu,J_{\laJ}\lu\}$ and 
$i\in\Z_{\geq 0}$, 
\begin{align}
\label{eq:omiv}
\omega_{\lao}^i\lv&\neq 0.
\end{align}
Since the same argument as in \cite[(3.23)]{Tanabe2017}
shows that
\begin{align}
\label{eq:J-ind}
0&=\dfrac{1}{16}\sv^{(9)}_{\laJ+2\lao+3}\lu=(-\laJ+2\lao+1)J_{\laJ}\omega_{\lao}^2\lu\nonumber\\
&=(-\laJ+2\lao+1)\omega_{\lao}^2J_{\laJ}\lu
\end{align}
by Lemma \ref{lemma:ojzero} (1), $\laJ=2\lao+1$ by \eqref{eq:omiv}.
Since
\begin{align}
0&=P^{(10),J}_{5\lao+4}\lu
=(8192\omega_{-1}^{5}\vac-2048\omega_{-1}^3J_{-1}\vac)_{5\lao+4}\lu\nonumber\\
&=2048(4\omega_{\lao}^{5}-J_{2\lao+1}\omega_{\lao}^3)\lu\nonumber\\
&=2048\omega_{\lao}^{3}(4\omega_{\lao}^{2}-J_{2\lao+1})\lu
\end{align}
by Lemma \ref{lemma:ojzero} (1), \eqref{eq:J2e=4omegae} holds by \eqref{eq:omiv}.
It follows from \eqref{eq:definition-omega-J-H} that $\Har_{i}\lu=0$ for all $i\geq 2\epsilon(\omega,\lu)+1$ and hence 
$\epsilon(\Har,\lu)\leq 2\epsilon(\omega,\lu)$.
\end{proof}

\begin{lemma}\label{lemma:r=1-s=3}
	Let $\lattice$ be a non-degenerate even lattice of rank $1$
	and $\module$ a non-zero weak $V_{\lattice}^{+}$-module.
	Then, there exists a non-zero $\lu \in\Omega_{M(1)^{+}}(\module)$ that
	satisfies one of the following conditions:
	\begin{enumerate}
		\item $\epsilon(\omega,\lu)=\epsilon(J,\lu)=\epsilon(E,\lu)=-1$. In this case $V_{L}^{+}\cdot \lu\cong V_{L}^{+}$.
		\item $\Har_{3}\lu=0$.
		\item $\omega_1\lu=\lu$ and $\Har_{3}\lu=\lu$.
		\item $\omega_1\lu=(1/16)\lu$ and $\Har_{3}\lu=(-1/128)\lu.$
		\item $\omega_1\lu=(9/16)\lu$ and $\Har_{3}\lu=(15/128)\lu.$
	\end{enumerate}
\end{lemma}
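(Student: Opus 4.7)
The plan is to start from any non-zero vector of $\module$ and, by repeatedly applying modes of elements of $V_\lattice^+$ and invoking the singular-vector relations of Lemma~\ref{lemma:relations-M(1)-V(lattice)+}, to arrive at a non-zero $\lu \in \Omega_{M(1)^+}(\module)$ on which $\omega_1$ and $\Har_3$ act with one of the prescribed eigenvalue patterns. Concretely, I would pick a non-zero $w_0 \in \module$ and, within the $V_\lattice^+$-submodule it generates, choose a non-zero $\lu$ with $\lom := \epsilon(\omega, \lu)$ as small as possible.

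Assume $\lom \geq 2$. By Lemma~\ref{lemma:m1+wJ} we have $\epsilon(J, \lu) = 2\lom+1$, $J_{2\lom+1}\lu = 4\omega_\lom^2 \lu$, and $\omega_\lom^i \lu \neq 0$ for every $i \geq 0$ by \eqref{eq:omiv}. I would then apply the modes $\sv^{(8),J}_n$ and $\sv^{(10),J}_n$ of Lemma~\ref{lemma:relations-M(1)-V(lattice)+} to $\lu$, choosing $n$ so as to isolate the top $\omega_\lom$-power, and use Lemma~\ref{lemma:ojzero} to move each $\omega_j$ with $j > \lom$ past $\omega_\lom$ and $J_{2\lom+1}$. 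Each such application collapses to a scalar polynomial in $\lom$ multiplied by a non-zero vector of the form $\omega_\lom^k \lu$. Setting the scalar to zero yields a polynomial equation in $\lom$ whose only non-negative integer solutions are $\lom \leq 1$, contradicting the assumption.

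Once $\lom \leq 1$, a parallel argument using $\sv^{(8),\Har}$ and $\sv^{(10),\Har}$ together with Lemma~\ref{lemma:EBasis-zero} gives $\epsilon(\Har, \lu) \leq 3$, and the relations $Q^{(4)}, Q^{(5,1)}, Q^{(5,2)}, Q^{(6)}$ combined with Lemmas~\ref{lemma:bound-H0E} and~\ref{lemma:bound-H0-1E} control $\epsilon(\ExB, \lu)$ when required. Since $M(1)^+$ is generated as a vertex subalgebra by $\omega$ and $\Har$ (cf.\ the discussion following \eqref{eq:h(-2)h(-1)=omega0}), the formula \eqref{eq:abm} then propagates these bounds to $\epsilon(a, \lu) \leq \wt a - 1$ for every homogeneous $a \in M(1)^+$, placing $\lu$ in $\Omega_{M(1)^+}(\module)$. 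If in addition $\omega_i \lu = J_i \lu = \ExB_i \lu = 0$ for all $i \geq 0$, then $\lu$ is vacuum-like and simplicity of $V_\lattice^+$ yields an isomorphism $V_\lattice^+ \cong V_\lattice^+ \cdot \lu$, which is case~(1). Otherwise, $\lu$ generates a non-zero $A(M(1)^+)$-module on which the commuting operators $\omega_1$ and $\Har_3$ act; the relations $\sv^{(8),\Har}$ and $\sv^{(10),\Har}$ descend to polynomial identities in $[\omega]$ and $[\Har]$ in $A(M(1)^+)$ whose common zero set, by the classification of irreducible $A(M(1)^+)$-modules in \cite{DN1999-1}, is precisely the eigenvalue pairs in cases (2)--(5); a common eigenvector of $\omega_1, \Har_3$ in a simple subquotient then realizes one of those four cases.

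The main obstacle is Step~1, the reduction $\lom \leq 1$: translating the long singular vectors $\sv^{(8),J}$ and $\sv^{(10),J}$ into a tractable polynomial equation in $\lom$ requires extensive Risa/Asir-assisted manipulation through Lemma~\ref{lemma:comm-change}, and at each stage the non-vanishing $\omega_\lom^i \lu \neq 0$ from \eqref{eq:omiv} must be deployed to extract information from the resulting identities. A secondary technical point is the verification in Step~2 that controlling the $\epsilon$-invariants on only the two generators $\omega$ and $\Har$ genuinely yields $\lu \in \Omega_{M(1)^+}(\module)$; this depends on the generation statement for $M(1)^+$ and careful bookkeeping with \eqref{eq:abm}.
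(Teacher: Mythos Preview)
Your reduction to $\lom \leq 1$ has a genuine gap. You propose to use only the $M(1)^{+}$-internal relations $\sv^{(8),J}$ and $\sv^{(10),J}$, claiming that a suitable mode applied to $\lu$ collapses to a non-trivial polynomial in $\lom$ times $\omega_{\lom}^{k}\lu$. This cannot work: those relations hold in every weak $M(1)^{+}$-module, and \cite{Tanabe2017} constructs simple weak $M(1)^{+}$-modules with Whittaker vectors for which $\epsilon(\omega,\lu)$ is an arbitrary integer $\geq 2$. Concretely, once Lemma~\ref{lemma:m1+wJ} gives $J_{2\lom+1}\lu = 4\omega_{\lom}^{2}\lu$, the top-degree contributions of $(\omega_{-1}^{4}\vac)$, $(\omega_{-1}^{2}J_{-1}\vac)$, and $(J_{-1}^{2}\vac)$ in $\sv^{(8),J}_{4\lom+3}\lu$ combine as $(-29056 + 4\cdot 5024 + 16\cdot 560)\omega_{\lom}^{4}\lu = 0$, so no constraint on $\lom$ emerges; the same phenomenon occurs for $\sv^{(10),J}$.

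The paper's proof uses the extra generator $\ExB$ to break this. With $\lom\geq 2$, one computes $Q^{(4)}_{\lE+2\lom+2}\lu$ (where $\lE=\epsilon(\ExB,\lu)$): after the expansions \eqref{eq:(omega-12E)wn-1}--\eqref{eq:(omega-12E)wn-2} and the vanishing \eqref{eq:0=(omega-3ExB)lE+2lom+2lu=}, \eqref{eq:(Har-1ExB)lE+2lom+2lu=}, only the $\omega_{-1}^{2}\ExB$ term survives and yields $-12\mn(\mn-2)(4\mn-3)\omega_{\lom}^{2}\ExB_{\lE}\lu$, which is non-zero by \eqref{eq:omiv2} when $\mn\neq 2$ --- a contradiction. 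For $\mn=2$ one instead uses \eqref{eq:6omega2E4omega0omega1E-2} and \eqref{eq:6omega2E4omega0omega1E-3}. So the $Q$-relations are essential for Step~1, not merely for bounding $\epsilon(\ExB,\lu)$ as you suggest in Step~2.

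A second, smaller point: after $\lom\leq 1$ the paper does not bound $\laJ = \epsilon(J,\lu)\leq 3$ directly from $\sv^{(8),\Har}$, $\sv^{(10),\Har}$ as you outline. Instead it shows via $\sv^{(8),J}_{2\laJ+1}\lu$ that $J_{\laJ}^{2}\lu=0$ whenever $\laJ\geq 4$, and then replaces $\lu$ by $J_{\laJ}\lu$ (which still has $\epsilon(\omega,\cdot)\leq 1$) to strictly decrease $\laJ$; iterating gives $\laJ\leq 3$. Your final step, invoking the classification in \cite{DN1999-1} rather than the paper's direct evaluation of $\sv^{(8),\Har}_{7}\lu$ and $\sv^{(10),\Har}_{9}\lu$, is a legitimate alternative, though less self-contained.
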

\begin{proof}
We write $\lattice=\Z \alpha$. Throughout the proof of this lemma,
$\mn=\langle\alpha,\alpha\rangle\in 2\Z\setminus\{0\}$.
	For a non-zero $\lu\in \mW$ with $\epsilon(\omega,\lu)<0$,
	since $\omega_0\lu=0$, it follows from \cite[Proposition 4.7.7]{LL}
	that $V_{\lattice}^{+}\cdot \lu\cong V_{\lattice}^{+}$ and hence
	$\epsilon(\omega,\lu)=\epsilon(J,\lu)=\epsilon(E,\lu)=-1$.
	
	We assume $\epsilon(\omega,\lv)\geq 0$ for all non-zero $\lv\in\module$.
	We take a non-zero $\lu\in\module$ with  $\epsilon(\omega,\lu)$ as small as possible, namely $0\leq \epsilon(\omega,\lu)\leq \varepsilon(\omega,\lv)$ for all
non-zero $\lv\in \module$.
	We write 
	\begin{align}
		\lom&=\epsilon(\omega,\lu),\quad \lJ=\epsilon(J,\lu),\mbox{ and }\lE=\epsilon(E,\lu)
	\end{align}
	for simplicity. We note that $\laJ,\lE\in\Z$ by \eqref{eq:epsilonI(u,v)inZ}.
	Suppose $\lao\geq 2$. Then, Lemma \ref{lemma:m1+wJ}  shows that $\laJ=2\lao+1$ and
	$
	\Har_{i}\lu=0
	$
	for all $i\geq 2\lao+1$.
By Lemma \ref{lemma:EBasis-zero} (2), 
for any non-zero $\lv\in \langle\omega_{\lao}\rangle\{\lu,J_{\laJ}\lu,E_{\laE}\lu, 
(\Har_{0}E)_{\epsilon(\Har_{0}E,\lu)}\lu\}$ and 
	$i\in\Z_{\geq 0}$, 
	\begin{align}
		\label{eq:omiv2}
		\omega_{\lao}^i\lv&\neq 0.
	\end{align}
Assume $\mn\neq 2$.
By \eqref{eq:(omega-12E)wn-1} and \eqref{eq:(omega-12E)wn-2} with $m=\lom$, 
	\begin{align}
		\label{eq:0=(omega-3ExB)lE+2lom+2lu=}
0&=(\omega_{-3}\ExB)_{\lE+2\lom+2}\lu=(\omega_0\omega_{-2}\ExB)_{\lE+2\lom+2}\lu\nonumber\\
&=(\omega_0^2\omega_{-1}\ExB)_{\lE+2\lom+2}\lu=
		(\omega_0^4\ExB)_{\lE+2\lom+2}\lu
		\end{align}
	and
	\begin{align}
	\label{eq:(omega-12ExB)lE+2lom+2lu=}
	(\omega_{-1}^2\ExB)_{\lE+2\lom+2}\lu&=\omega_{\lom}^{2}\ExB_{\lE}\lu.
	\end{align}
Using Lemma \ref{lemma:comm-change}, 
\eqref{eq:Har1ExB=dfrac2mn2mn1omega1ExB}, \eqref{eq:J1ExB=frac2mn(4mn-11)-1}, and \eqref{eq:J1ExB=frac2mn(4mn-11)-2},
we expand $(\Har_{-1}\ExB)_{\lE+2\lom+2}$ so that 
the resulting expression is  a linear combination of elements of the form 
\begin{align}
a^{(1)}_{i_1}\cdots a^{(l)}_{i_l}\ExB_{m}b^{(1)}_{j_1}\cdots b^{(n)}_{j_n}
	\end{align}
where $l,n\in\Z_{\geq 0}$, $m\in\Z$, and 
\begin{align}
(a^{(1)},i_1),\ldots,(a^{(l)},i_l)&\in\{(\omega,k)\ |\ k\leq \lom\}\cup\{(\Har,k)\ |\ k\leq 2\lom\},\nonumber\\
(b^{(1)},j_1),\ldots,(b^{(n)},j_n)&\in\{(\omega,k)\ |\ k\geq \lom+1\}\cup\{(\Har,k)\ |\ k\geq 2\lom+1\},
\end{align}
as was done in \eqref{eq:(omega-12E)wn-1} and \eqref{eq:(omega-12E)wn-2}.
Then, taking the action of the obtained expansion of $(\Har_{-1}\ExB)_{\lE+2\lom+2}$ on $\lu$
and using \eqref{eq:epsilon(Har,lu)leq 2epsilon} and \eqref{eq:0=(omega-3ExB)lE+2lom+2lu=}, we have 
	\begin{align}
	\label{eq:(Har-1ExB)lE+2lom+2lu=}
	(\Har_{-1}\ExB)_{\lE+2\lom+2}\lu&=\ExB_{\lE}\Har_{2\lom+1}\lu=0.
\end{align}
By \eqref{eq:big(2(mn-2)(-27 + 54mn - 44mn2+ 40mn3)omega-3-1}, \eqref{eq:0=(omega-3ExB)lE+2lom+2lu=}, 
\eqref{eq:(omega-12ExB)lE+2lom+2lu=}, and \eqref{eq:(Har-1ExB)lE+2lom+2lu=},
	\begin{align}
		\label{eqn:zeroQ4}
		0&=Q^{(4)}_{\lE+2\lom+2}\lu
		=-12\mn (\mn-2) (-3 + 4 \mn)\omega_{\lom}^2E_{\lE}\lu,
	\end{align}
which contradicts \eqref{eq:omiv2}. 

Assume $\mn=2$.
By Lemma \ref{lemma:bound-H0E}, $\epsilon(\Har_{0}\ExB,\lu)\leq \lE+2\lom+1$.
	By \eqref{eq:6omega2E4omega0omega1E-2}, Lemma \ref{lemma:m1+wJ} and the results in Section \ref{section:normal-2},
the same argument as above shows
	\begin{align}
		0&=
		(180\omega_{-3 } E -48\omega_{-1 }^2 E +72\Har_{-1 } E -63\omega_{0 } (\Har_{0}E)\nonumber\\&\quad{}
		+8\omega_{0 }^2\omega_{-1 } E +\omega_{0 }^4 E)_{\lE+2\lom+2}\lu\nonumber\\
		&=(-48\omega_{\lom}^2E_{\lE}+72E_{\lE}\Har_{2\lom+1}+63(\lE+2\lom+2)(\Har_{0}E)_{\lE+2\lom+1})\lu\nonumber\\
		&=(-48\omega_{\lom}^2E_{\lE}+63(\lE+2\lom+2)(\Har_{0}E)_{\lE+2\lom+1})\lu
	\end{align}
	and hence $(\Har_{0}E)_{\lE+2\lom+1}\lu\neq 0$ by \eqref{eq:omiv2}.
	By \eqref{eq:6omega2E4omega0omega1E-3} and results in Section \ref{section:normal-2},
	\begin{align}
		0&=
		(9450\omega_{-4 } E -900\omega_{-1 } (H_{0}E)+6750H_{-2 } E-768\omega_{0 } \omega_{-1 }^{2} E \nonumber\\&\quad{}
		-3168\omega_{0 } \Har_{-1 } E 
		+297\omega_{0 }^{2} (H_{0}E) +128\omega_{0 }^{3}\omega_{-1 } E+16\omega_{0 }^{5}E)_{\lE+3\lom+2}\lu\nonumber\\
		&=-900\omega_{\lom}(\Har_{0}E)_{\lE+2\lom+1}\lu,
	\end{align}
	which also contradicts \eqref{eq:omiv2}. We conclude that $\lao\leq 1$.
	
	Suppose $\laJ\geq 4$. 
By using  \cite[(2.29)]{Tanabe2017} and \eqref{eq:wiJj}, the same argument as in \cite[Lemma 3.3]{Tanabe2017} shows that
	$\epsilon(\omega,J_{\laJ}\lu)\leq 1$ and
	$J_{j}J_{\laJ}\lu=0$ for all $j\geq \laJ+1$.
	By the same argument as in \cite[(3.25)]{Tanabe2017},
	\begin{align}
		(J_{-1}J)_{2\laJ+1}\lu
		&=J_{\laJ}^2\lu
	\end{align}
	and hence
	\begin{align}
		\label{eq:jjv}
		0&=P^{(8),J}_{2\laJ+1}\lu=J_{\laJ}^2\lu=J_{\laJ}(J_{\laJ}\lu),
	\end{align}
	which means $\epsilon(J,J_{\laJ}\lu)<\laJ=\epsilon(J,\lu)$.
	Replacing $\lu$ by $J_{\laJ}\lu$ repeatedly, we get a non-zero $\lu\in \module$ such that
	$\lao\leq 1$ and $\laJ\leq 3$. Thus, $\lu\in \Omega_{M(1)^{+}}(\module)$ and 
	in particular, $\epsilon(\Har,\lu)\leq 3$.
	Deleting the terms including $\omega_1^{i}\Har_3^2\lu\ (i=0,1,\ldots)$ from 
	the following simultaneous equations 
	\begin{align}
		\label{eq:p87u}
		0&=P^{(8),H}_7\lu
		=-72(132 \omega_{1 }^2
		-65 \omega_{1 } 
		+3
		-70\Har_{3 })\Har_{3 }\lu\mbox{ and }\\
		0&=P^{(10),H}_9\lu \nonumber\\
		&=240 \Har_{3} (-207 + 4725 \Har_{3} + 4472 \omega_{1} - 9118 \omega_{1}^2 + 128 \omega_{1}^3)\lu,
	\end{align}
	we have
	\begin{align}
		\label{eq:P8-P10}
		0
		&=( \omega_{1}-1 ) ( 16 \omega_{1}-1 ) (16 \omega_{1}-9  ) \Har_{3}\lu.
	\end{align}
	By \eqref{eq:p87u} and \eqref{eq:P8-P10}, the proof is complete.
\end{proof}
\begin{remark}
	If $\mn>0$, then Lemma \ref{lemma:r=1-s=3} also 
	follows from \cite[Theorem 7.7]{Abe2005}, \cite[Theorem 5.13]{DN1999-2}, and \cite[Theorem 2.7]{Mi2004d}.
\end{remark}
\begin{remark}
As we have seen in the proof of Lemma \ref{lemma:r=1-s=3},
starting from an arbitrary non-zero element in $\module$,
we can get $\lu$ in Lemma \ref{lemma:r=1-s=3} inductively.
\end{remark}

\begin{lemma}
\label{lemma:structure-Vlattice-M1}
Assume $\mn\neq 2, 1/2$.
Let $\lu$ be a non-zero element of $\Omega_{M(1)^{+}}(W)$ with $I(E,x)\lu\neq 0$.
We write 
\begin{align}
\lE&=\epsilon(\ExB,\lu)
\end{align}
for simplicity.
We set
\begin{align}
\lv&=(\omega_1-\dfrac{(\lE+1)^2}{2\mn})\lu.
\end{align}
We have 
\begin{align}
\lvE&=(\omega_1-\dfrac{(\lE+1-\mn)^2}{2\mn})E_{\lE}\lu.
\end{align}
\begin{enumerate}
\item Assume $\Har_{3}\lu=0$.
If $\lvE\neq 0$, then 
$\lE=\mn-2$ and 
\begin{align}
\label{eq:omega1lv=lv}
\omega_1(\lvE)&=\lvE.
\end{align}
If $\lu$ is an eigenvector of $\omega_1$ and $\lv\neq 0$, then 
$\lE=\mn-2$ and 
\begin{align}
\omega_1\lu=\dfrac{\mn}{2}\lu.
\end{align}
\item
If $\omega_1\lu=\lu$ and $\Har_{3}\lu=\lu$, then
$\lE=0$.
\item
Assume $\omega_1\lu=(1/16)\lu$ and $\Har_{3}\lu=(-1/128)\lu$. 
Then
$\lE=\mn/2-1$ or $(\mn-1)/2$.
In particular if $\mn$ is an even integer, then $\lE=\mn/2-1$.
\item
Assume $\omega_1\lu=(9/16)\lu$ and $\Har_{3}\lu=(15/128)\lu$. Then
$\lE=\mn/2-1$ or $(\mn-3)/2$.
In particular if $\mn$ is an even integer, then $\lE=\mn/2-1$.
\end{enumerate}
\end{lemma}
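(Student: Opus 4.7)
The plan is first to verify the identity $\lvE = (\omega_1 - (\lE+1-\mn)^2/(2\mn))\ExB_\lE \lu$ by one application of the commutator \eqref{eq:wiEji}. Indeed $[\omega_1, \ExB_\lE] = (\mn/2 - 1 - \lE)\ExB_\lE$, hence $\ExB_\lE \lv = \omega_1 \ExB_\lE \lu + (\lE + 1 - \mn/2)\ExB_\lE \lu - ((\lE+1)^2/(2\mn))\ExB_\lE \lu$, and completing the square rewrites $\lE + 1 - \mn/2 - (\lE+1)^2/(2\mn)$ as $-(\lE+1-\mn)^2/(2\mn)$.

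For each of the four sub-cases I will apply a suitable $\C$-linear combination of the singular vectors $Q^{(4)}, Q^{(5,1)}, Q^{(5,2)}, Q^{(6)}$ from Lemma \ref{lemma:relations-M(1)-V(lattice)+} at a mode index whose weight matches that of $\ExB_\lE \lu$, and then reduce the resulting zero-equation to a scalar relation in $\omega_1$ acting on $\ExB_\lE \lu$. Since $\lu \in \Omega_{M(1)^{+}}(\mW)$ kills $\omega_i \lu$ for $i \geq 2$ and $\Har_i \lu$ for $i \geq 4$, iterated use of Lemma \ref{lemma:comm-change} together with the expansions \eqref{eq:(omega-12E)wn-1}--\eqref{eq:(omega-12E)wn-2} and the module formulas \eqref{eq:J1ExB=frac2mn(4mn-11)-1}--\eqref{eq:J1ExB=frac2mn(4mn-11)-2} lets me rewrite any $Q^{(k)}_n \lu$ as a linear combination of terms $\omega_1^a \Har_3^b \ExB_\lE \lu$ and $\omega_1^a \Har_3^b \lu$.

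In case (1), with $\Har_3 \lu = 0$, a reduction of $Q^{(4)}_n \lu$ at the appropriate $n$ (augmented, if necessary, with $Q^{(5,j)}$ or $Q^{(6)}$) will yield a quadratic relation in $\omega_1$ acting on $\ExB_\lE \lu$ whose two roots are $(\lE+1-\mn)^2/(2\mn)$ and a second value $f(\lE, \mn)$. The first root corresponds to $\lvE = 0$, so $\lvE \neq 0$ means that $\omega_1 \lvE = f(\lE, \mn)\lvE$; the identity $f(\lE, \mn) = 1$ is expected to hold precisely when $\lE = \mn - 2$, yielding both conclusions simultaneously. The second statement of (1), for $\omega_1$-eigenvectors $\lu$ of eigenvalue $\mu$, then follows from $\omega_1 \ExB_\lE \lu = (\mu + \mn/2 - 1 - \lE)\ExB_\lE \lu$: equating this to $1$ and using $\lE = \mn - 2$ forces $\mu = \mn/2$. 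In cases (2)--(4), the nonzero value $\Har_3 \lu = c\lu$ (with $c = 1, -1/128, 15/128$ respectively) contributes additional known terms to the reduction; eliminating $\omega_1 \ExB_\lE \lu$ and $\Har_3 \ExB_\lE \lu$ between the relations from $Q^{(4)}, Q^{(5,1)}, Q^{(5,2)}, Q^{(6)}$ should produce a polynomial equation in $\lE$ whose roots are precisely the listed values. When $\mn$ is an even integer the half-integer roots $(\mn-1)/2$ and $(\mn-3)/2$ do not satisfy $\lE \in \Z$, so only $\mn/2 - 1$ survives in (3) and (4).

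The main obstacle is computational: reducing $Q^{(k)}_n \lu$ for $k \in \{4,5,6\}$ into a normal-ordered form via repeated use of Lemma \ref{lemma:comm-change} and the commutators \eqref{eq:wiwj}--\eqref{eq:hiomegaj} produces a large number of terms, and, as elsewhere in the paper, Risa/Asir is essential for tracking cancellations. Conceptually the argument is a uniform eigenvalue analysis on $\ExB_\lE \lu$; the real work is in choosing the right combinations of $Q^{(k)}$'s in cases (3) and (4) so that the resulting polynomial in $\lE$ factors into the two expected linear factors.
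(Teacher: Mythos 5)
Your proposal follows essentially the same route as the paper: the identity for $\lvE$ via \eqref{eq:wiEji}, then acting with $Q^{(4)}_{\lE+4}$, $Q^{(5,1)}_{\lE+5}$, $Q^{(5,2)}_{\lE+5}$, $Q^{(6)}_{\lE+6}$ on $\lu$, normal-ordering via Lemma \ref{lemma:comm-change}, and eliminating between the resulting relations (by Risa/Asir) to pin down $\lE$ and the $\omega_1$-eigenvalue in each case, with the integrality of $\lE$ ruling out the half-integer roots in (3) and (4). The outline is correct and matches the paper's proof; the only content you defer is the explicit computer-algebra elimination, which is exactly what the paper also delegates to Risa/Asir.
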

\begin{proof}
We first expand each of $Q^{(4)}_{\lE+4},Q^{(5,1)}_{\lE+5}, Q^{(5,2)}_{\lE+5}$, and $Q^{(6)}_{\lE+6}$
so that the resulting expression is a linear combination of elements of the form 
\begin{align}
	a^{(1)}_{i_1}\cdots a^{(l)}_{i_l}\ExB_{m}b^{(1)}_{j_1}\cdots b^{(n)}_{j_n}
\end{align}
where $l,n\in\Z_{\geq 0}$, $m\in\Z$, and 
\begin{align}
	(a^{(1)},i_1),\ldots,(a^{(l)},i_l)&\in\{(\omega,k)\ |\ k\leq 1\}\cup\{(\Har,k)\ |\ k\leq 2\},\nonumber\\
	(b^{(1)},j_1),\ldots,(b^{(n)},j_n)&\in\{(\omega,k)\ |\ k\geq 2\}\cup\{(\Har,k)\ |\ k\geq 3\},
\end{align}
as was done in the proof of \eqref{eq:0=(omega-3ExB)lE+2lom+2lu=}--\eqref{eq:(Har-1ExB)lE+2lom+2lu=} in Lemma \ref{lemma:r=1-s=3}.
Then, taking the action of each expansion on $\lu$, we have
\begin{align}
\label{eq:w18w48-1}
0&=((\lE+1-\mn)^2-2\mn\omega_{1})
\nonumber\\
&\quad{}\times((16 \mn+3) \lE^2+(-24 \mn^2+58 \mn) \lE
\nonumber\\
&\qquad\quad{}+8 \mn^3+(-8  \omega_{1}-37) \mn^2+(22  \omega_{1}+42) \mn-12  \omega_{1})\ExB_{\lE}\lu
\nonumber\\&\quad{}+
2 \mn (\mn-2) (2 \mn-9) (2 \mn-1)\ExB_{\lE}\Har_{3}\lu,\\
\label{eq:w18w48-2}
0&=
(((\lE+1-\mn)^2-2\mn\omega_{1}))\nonumber\\
&\quad{}\times\big((8 \mn-9) \lE^3+(88 \mn^2+26 \mn-45) \lE^2
\nonumber\\
&\qquad\quad{}+(-146 \mn^3+(16 \omega_{1}+375) \mn^2+(-18 \omega_{1}-43) \mn-54) \lE
\nonumber\\
&\qquad\quad{}+50 \mn^4
+(-60 \omega_{1}-240) \mn^3+(184 \omega_{1}+306) \mn^2
\nonumber\\
&\qquad\qquad{}+(-140 \omega_{1}-40) \mn+24 \omega_{1}-24\big)\ExB_{\lE}\lu
\nonumber\\&\quad{}+
2 (\mn-2) (2 \mn-1) ((6 \mn^2-5 \mn+6) \lE+10 \mn^3-54 \mn^2+10 \mn+6)\ExB_{\lE}\Har_{3}\lu,\\
\label{eq:w18w48-3}
0&=
(((\lE+1-\mn)^2-2\mn\omega_{1}))\nonumber\\
&\quad{}\times\big((16 \mn^2+61 \mn-102) \lE^3+(216 \mn^3+326 \mn^2-67 \mn-510) \lE^2
\nonumber\\
&\qquad\quad{}+(-352 \mn^4+(40 \omega_{1}+349) \mn^3+(-6 \omega_{1}+1772) \mn^2
\nonumber\\
&\qquad\qquad{}+(-88 \omega_{1}-1218) \mn+48 \omega_{1}-540) \lE+120 \mn^5
\nonumber\\
&\qquad\qquad{}+(-144 \omega_{1}-376) \mn^4+
(200 \omega_{1}-405) \mn^3
\nonumber\\
&\qquad\qquad{}+(646 \omega_{1}+1914) \mn^2+(-1180 \omega_{1}-1000) \mn+480 \omega_{1}-240\big)\ExB_{\lE}\lu
\nonumber\\&\quad{}+
2 (\mn-2) (2 \mn-1) \big((14 \mn^3+21 \mn^2-74 \mn+60) \lE+24 \mn^4
\nonumber\\
&\qquad\qquad{}-90 \mn^3-221 \mn^2+220 \mn+60\big)\ExB_{\lE}\Har_{3}\lu,
\end{align}
\begin{align}
\label{eq:w18w48-4}
0&=
5 ((\lE+1-\mn)^2-2\mn\omega_{1}) \nonumber\\
&\quad{}\times\big((12 \mn^2-27 \mn+6) \lE^4\nonumber\\
&\qquad\quad{}+(24 \mn^3+174 \mn^2-501 \mn+114) \lE^3\nonumber\\
&\qquad\quad{}+(1056 \mn^5-1443 \mn^4+(24 \omega_{1}+2373) \mn^3+(-54 \omega_{1}-1317) \mn^2
\nonumber\\
&\qquad\qquad{}+(12 \omega_{1}-1548) \mn+411) \lE^2\nonumber\\
&\qquad\quad{}+(-1584 \mn^6+(352 \omega_{1}+5873) \mn^5+(-814 \omega_{1}-8768) \mn^4\nonumber\\
&\qquad\qquad{}+(1220 \omega_{1}+12088) \mn^3+(-1568 \omega_{1}-7732) \mn^2
\nonumber\\
&\qquad\qquad{}+(756 \omega_{1}-561) \mn-90 \omega_{1}+414) \lE\nonumber\\
&\qquad\quad{}+528 \mn^7+(-704 \omega_{1}-3058) \mn^6+(2820 \omega_{1}+6912) \mn^5\nonumber\\
&\qquad\qquad{}+(48 \omega_{1}^2-5042 \omega_{1}-10000) \mn^4+(-108 \omega_{1}^2+7242 \omega_{1}+11310) \mn^3\nonumber\\
&\qquad\qquad{}+(24 \omega_{1}^2-7052 \omega_{1}-5950) \mn^2+(3060 \omega_{1}+42) \mn-360 \omega_{1}+180\big)\ExB_{\lE}\lu
\nonumber\\&\quad{}+
(\mn-2) (2 \mn-1) \big((1056 \mn^5-582 \mn^4+1428 \mn^3-1932 \mn^2+1992 \mn-450) \lE \nonumber\\
&\qquad\quad{}+792 \mn^6+(176 \omega_{1}-6224) \mn^5+(-392 \omega_{1}+8666) \mn^4
 \nonumber\\
&\qquad\quad{}+(628 \omega_{1}-13729) \mn^3+(-352 \omega_{1}+11014) \mn^2+(192 \omega_{1}+120) \mn-450\big)\ExB_{\lE}\Har_{3}\lu.
\end{align}
We also expand each of $Q^{(4)}_{\lE+4},Q^{(5,1)}_{\lE+5}, Q^{(5,2)}_{\lE+5}$ and $Q^{(6)}_{\lE+6}$
so that the resulting expression is a linear combination of elements of the form 
\begin{align}
	a^{(1)}_{i_1}\cdots a^{(l)}_{i_l}\ExB_{m}b^{(1)}_{j_1}\cdots b^{(n)}_{j_n}
\end{align}
where $l,n\in\Z_{\geq 0}$, $m\in\Z$, and 
\begin{align}
	(a^{(1)},i_1),\ldots,(a^{(l)},i_l)&\in\{(\omega,k)\ |\ k\leq 0\}\cup\{(\Har,k)\ |\ k\leq 2\},\nonumber\\
	(b^{(1)},j_1),\ldots,(b^{(n)},j_n)&\in\{(\omega,k)\ |\ k\geq 1\}\cup\{(\Har,k)\ |\ k\geq 3\}.
\end{align}
Then, taking the action of each expansion on $\lu$, we have
\begin{align}
\label{eq:ExBlE((1+lE)2-2mnomega1)-0}
0&=
\ExB_{\lE}((1+\lE)^2-2 \mn \omega_{1})
\nonumber\\&\quad{}\times{} \big((16 \mn+3)\lE^{2}
+
(-16 \mn^2+36 \mn+12)\lE
\nonumber\\&\qquad\quad{}+
4 \mn^3+(-8 \omega_1-18) \mn^2+(22 \omega_1+14) \mn-12 \omega_1+12\big)\lu
\nonumber\\&\quad{}+\ExB_{\lE}\Har_{3}\big(
8 \mn^4-56 \mn^3+98 \mn^2-36 \mn\big)\lu,
\end{align}
\begin{align}
\label{eq:ExBlE((1+lE)2-2mnomega1)-1}
0&=
\ExB_{\lE}((1+\lE)^2-2 \mn \omega_{1})
\nonumber\\&\quad{}\times{} \big((8 \mn-9)\lE^{3}
\nonumber\\&\qquad\quad{}+
(72 \mn^2+44 \mn-45)\lE^{2}
\nonumber\\&\qquad\quad{}+
(-78 \mn^3+(16 \omega_1+166) \mn^2+(-18 \omega_1+115) \mn-78)\lE
\nonumber\\&\qquad\quad{}+
20 \mn^4+(-60 \omega_1-88) \mn^3+(184 \omega_1+52) \mn^2+(-140 \omega_1+112) \mn+24 \omega_1-48\big)\lu
\nonumber\\&\quad{}+\ExB_{\lE}\Har_{3}\big(
(24 \mn^4-80 \mn^3+98 \mn^2-80 \mn+24)\lE
\nonumber\\&\qquad\quad{}+
40 \mn^5-316 \mn^4+620 \mn^3-292 \mn^2-20 \mn+24\big)\lu,\\
\label{eq:ExBlE((1+lE)2-2mnomega1)-2}
0&=
\ExB_{\lE}((1+\lE)^2-2 \mn \omega_{1})
\nonumber\\&\quad{}\times{} \big((16 \mn^2+61 \mn-102)\lE^{3}
\nonumber\\&\qquad\quad{}+
(176 \mn^3+332 \mn^2+21 \mn-558)\lE^{2}
\nonumber\\&\qquad\quad{}+
(-188 \mn^4+(40 \omega_1+106) \mn^3+(-6 \omega_1+1088) \mn^2\nonumber\\
&\qquad\qquad{}+(-88 \omega_1+74) \mn+48 \omega_1-1068)\lE
\nonumber\\&\qquad\quad{}+
48 \mn^5+(-144 \omega_1-132) \mn^4+(200 \omega_1-282) \mn^3\nonumber\\
&\qquad\qquad{}+(646 \omega_1+678) \mn^2+(-1180 \omega_1+420) \mn+480 \omega_1-720\big)\lu
\nonumber\\&\quad{}+\ExB_{\lE}\Har_{3}\big(
(56 \mn^5-56 \mn^4-450 \mn^3+1064 \mn^2-896 \mn+240)\lE
\nonumber\\&\qquad\quad{}+
96 \mn^6-600 \mn^5+112 \mn^4+2730 \mn^3-2844 \mn^2+280 \mn+240\big)\lu,
	\end{align}
\begin{align}
\label{eq:ExBlE((1+lE)2-2mnomega1)-3}
0&=
\ExB_{\lE}((1+\lE)^2-2 \mn \omega_{1})
\nonumber\\&\quad{}\times{} \big((60 \mn^2-135 \mn+30)\lE^{4}
\nonumber\\&\qquad\quad{}+
(1140 \mn^2-2565 \mn+570)\lE^{3}
\nonumber\\&\qquad\quad{}+
(3520 \mn^5-2845 \mn^4+(120 \omega_1+4970) \mn^3+(-270 \omega_1+1675) \mn^2\nonumber\\
&\qquad\qquad{}+(60 \omega_1-11580) \mn+2505)\lE^{2}
\nonumber\\&\qquad\quad{}+
(-3520 \mn^6+(1760 \omega_1+11230) \mn^5+(-4550 \omega_1-10490) \mn^4\nonumber\\
&\qquad\qquad{}+(7180 \omega_1+13010) \mn^3+(-8080 \omega_1+6570) \mn^2\nonumber\\
&\qquad\qquad{}+(3780 \omega_1-22110) \mn-450 \omega_1+4320)\lE
\nonumber\\&\qquad\quad{}+
(880 \mn^7+(-3520 \omega_1-4660) \mn^6+(14340 \omega_1+7480) \mn^5\nonumber\\
&\qquad\qquad{}+(240 \omega_1^2-26230 \omega_1-5875) \mn^4+(-540 \omega_1^2+37410 \omega_1+2050) \mn^3\nonumber\\
&\qquad\qquad{}+(120 \omega_1^2-35500 \omega_1+13280) \mn^2+(15300 \omega_1-15990) \mn-1800 \omega_1+2700)
\big)\lu
\nonumber\\&\quad{}+\ExB_{\lE}\Har_{3}\big(
(1760 \mn^7-4780 \mn^6+4310 \mn^5-7540 \mn^4+13100 \mn^3-13060 \mn^2+5850 \mn-900)\lE
\nonumber\\&\qquad\quad{}+
(1760 \mn^8+(352 \omega_1-17592) \mn^7+(-1664 \omega_1+53484) \mn^6\nonumber\\
&\qquad\qquad{}+(3568 \omega_1-89118) \mn^5+(-4628 \omega_1+114333) \mn^4+(3400 \omega_1-86520) \mn^3\nonumber\\
&\qquad\qquad{}+(-1664 \omega_1+22384) \mn^2+(384 \omega_1+2106) \mn-900)
\big)\lu.
\end{align}
Note that $\omega_{1}$'s are on the left side of $\ExB_{\lE}$ in \eqref{eq:w18w48-1}--\eqref{eq:w18w48-4},
but are on the right side of $\ExB_{\lE}$ in \eqref{eq:ExBlE((1+lE)2-2mnomega1)-0}--\eqref{eq:ExBlE((1+lE)2-2mnomega1)-3}.

\begin{enumerate}
\item
Assume $\Har_{3}\lu=0$
, $\lvE=(\omega_1-(\lE+1-\mn)^2/(2\mn))E_{\lE}\lu\neq 0$, and $\lE\neq \mn-2$.
By \eqref{eq:w18w48-1},
\begin{align}
\label{eq:dfrac3lE2+42mn +58lEmn}
0&=(2(\mn-2)(4\mn-3)\omega_1\nonumber\\
&\qquad{}-(3 \lE^2+42 \mn +58 \lE \mn +16 \laE^2 \mn -37 \mn ^2-24 \lE \mn ^2+8 \mn ^3))
\ExB_{\lE}\lv.
\end{align}
Then, by \eqref{eq:w18w48-2} and \eqref{eq:w18w48-3},
\begin{align}
0&=(10 \mn^2-4 \mn+3)(2 t-\mn+2) ((8 \mn-9) t-4 \mn^2+16 \mn-12),\nonumber\\
0&=(12 \mn^3+16 \mn^2-35 \mn+15) (2 t-\mn+2) ((8 \mn-9) t-4 \mn^2+16 \mn-12)
\end{align}
and hence 
\begin{align}
\label{eq:0=mn92mn2mnlE}
0&=(2 t-\mn+2) ((8 \mn-9) t-4 \mn^2+16 \mn-12).
\end{align}
By \eqref{eq:w18w48-3} and \eqref{eq:dfrac3lE2+42mn +58lEmn},
\begin{align}
\label{eq:(4032 mn5-2880 mn4+3360 mn3-2091 mn2+774 mn-108}
0&=
(4032 \mn^5-2880 \mn^4+3360 \mn^3-2091 \mn^2+774 \mn-108) t^3\nonumber\\
&\quad{}+(11264 \mn^7-38336 \mn^6+71692 \mn^5-87577 \mn^4\nonumber\\
&\qquad{}+82959 \mn^3-43593 \mn^2+12078 \mn-1431) t^2\nonumber\\
&\quad{}+(-11264 \mn^8+62768 \mn^7-146980 \mn^6+228928 \mn^5\nonumber\\
&\qquad{}-266274 \mn^4+218205 \mn^3-103356 \mn^2+26244 \mn-2916) t\nonumber\\
&\quad{}+2816 \mn^9-22304 \mn^8+72152 \mn^7-138308 \mn^6\nonumber\\
&\qquad{}+193478 \mn^5-203897 \mn^4+147876 \mn^3-64152 \mn^2+15282 \mn-1620.
\end{align}
If $2 t-\mn+2=0$, then $\lE\neq 0$ and by \eqref{eq:(4032 mn5-2880 mn4+3360 mn3-2091 mn2+774 mn-108}
\begin{align}
0&=t^2(4t+3)(352t^4+1356t^3+2080t^2+1452t+385),
\end{align}
a contradiction.
If $(8 \mn-9) t-4 \mn^2+16 \mn-12=0$, then as polynomials in $\mn$,
the right-hand side of \eqref{eq:(4032 mn5-2880 mn4+3360 mn3-2091 mn2+774 mn-108}
divided by $(8 \mn-9) t-4 \mn^2+16 \mn-12$ leaves a remainder of 
\begin{align}
\label{eq:(10560 mn-11880) t6+(76320 mn-169245/2) t5}
0&=(10560 \mn-11880) t^6+(76320 \mn-169245/2) t^5\nonumber\\
&\quad{}+(471705/2 \mn-2059785/8) t^4+(3187935/8 \mn-6397605/16) t^3\nonumber\\
&\quad{}+(3012585/8 \mn-5600025/16) t^2+(202005 \mn-356265/2) t\nonumber\\
&\quad{}+45900 \mn-39285.
\end{align}
Moreover, $(8 \mn-9) t-4 \mn^2+16 \mn-12$ divided by  the right-hand side of 
\eqref{eq:(10560 mn-11880) t6+(76320 mn-169245/2) t5} leaves a remainder of 
\begin{align}
0&=
\frac{-9 (t+1) (4 t+3)^2 (7 t+4) (7 t^2+10 t+7)}
{(5632 t^6+40704 t^5+125788 t^4+212529 t^3+200839 t^2+107736 t+24480)^2} \nonumber\\
&\quad{}\times (94556 t^4+495381 t^3+1010052 t^2+931824 t+326592)
\end{align}
and hence $\lE=-1$, which implies $\mn=1/2$ by \eqref{eq:(10560 mn-11880) t6+(76320 mn-169245/2) t5}, a contradiction.
 Thus, 
$\lE=\mn-2$
by \eqref{eq:0=mn92mn2mnlE} and \eqref{eq:omega1lv=lv} holds by \eqref{eq:dfrac3lE2+42mn +58lEmn}.
\item Assume $\omega_{1}\lu=\lu$ and $\Har_{3}\lu=\lu$.
By \eqref{eq:ExBlE((1+lE)2-2mnomega1)-0} 
and \eqref{eq:ExBlE((1+lE)2-2mnomega1)-1},
$tg_1(\mn)=tg_2(\mn)=0$ where
\begin{align}
\label{eq:g1(mn)=3(lE+1)2(lE+4)+}
g_1(\mn)&=3 (\lE+1)^2 (\lE+4)+
2 (4 \lE+7) (2 \lE^2+5 \lE+6)\mn\nonumber\\
&\quad{}-2 (8 \lE^2+45 \lE+70)\mn^{2}+4 (\lE+10)\mn^{3},\nonumber\\
g_2(\mn)&=-3 (\lE+1) (\lE+2) (3 \lE^2+12 \lE+17)\nonumber\\
&\quad{}+(8 \lE^4+60 \lE^3+211 \lE^2+300 \lE+117)\mn\nonumber\\
&\quad{}+2 (36 \lE^3+155 \lE^2+292 \lE+279)\mn^{2}\nonumber\\
&\quad{}-2 (39 \lE^2+224 \lE+409)\mn^{3}\nonumber\\
&\quad{}+20 (\lE+11)\mn^{4}.
\end{align}
We note that the degrees of $g_1(\mn)$ and $g_2(\mn)$
in $\mn$ are at most $3$ and $4$ respectively.
Assume $\lE\neq 0$. By \eqref{eq:g1(mn)=3(lE+1)2(lE+4)+}, 
\begin{align}
0&=2 (10 +\lE)^{2}g_1(\mn)-(-240 + 176 \lE +  51 \lE^2 +\lE^3+ 10(11 + \lE)(10+\lE) \mn)
g_2(\mn)
\nonumber\\
&= -3 (\lE+1) (7 \lE^5+212 \lE^4+1837 \lE^3+6152 \lE^2+9064 \lE+5840)\nonumber\\
&\quad{}-6 (79 \lE^5+465 \lE^4+90 \lE^3-3352 \lE^2-7666 \lE-5060)\mn\nonumber\\
&\quad{}+6 (61 \lE^4+409 \lE^3+866 \lE^2-880 \lE-2400)\mn^{2},
\end{align}
which is a polynomial of degree at most $2$ in $\mn$. 
Repeating this procedure to decrease the degrees of polynomials in $\mn$,
we finally obtain
\begin{align}
	\label{eq:g1(lE)=-3(-1 + lE)}
0&=(-1 + \lE) (1 + \lE) (2 + \lE)^2 (3 + \lE) 
	(-20 + \lE + 33 \lE^2) \nonumber\\
	&\quad{}\times (433 + 235 \lE + 	67 \lE^2 + 33 \lE^3)\nonumber\\
	&\quad{}\times  (-2400 - 880 \lE + 	866 \lE^2 + 409 \lE^3 + 61 \lE^4)^2 \nonumber\\
&\quad{}\times 	(5020 + 14072 \lE + 18476 \lE^2 + 
	13940 \lE^3 + 6272 \lE^4 + 1580 \lE^5 + 	175 \lE^6).
	\end{align}
Since $\lE$ is an integer, it follows from
 \eqref{eq:ExBlE((1+lE)2-2mnomega1)-0}, 
\eqref{eq:ExBlE((1+lE)2-2mnomega1)-1}, 	and \eqref{eq:g1(lE)=-3(-1 + lE)}
that $(\lE,\mn)=(-1,0),(1,2),(-3,2)$, or $(-2,1/2)$, a contradiction.
 Thus, $\lE=0$.
If $\omega_1\lu=(1/16)\lu$ (resp. $(9/16)\lu$) and $H_{3}\lu=(-1/128)\lu$ (resp. $(15/128)\lu$), then 
the same argument as above shows the results.
\end{enumerate}
\end{proof}

\begin{lemma}
\label{lemma:structure-Vlattice-M1-norm2}
Assume $\mn=2$.
Let $\lu$ be a non-zero element of $\Omega_{M(1)^{+}}(\mW)$ with $I(E,x)\lu\neq 0$.
We write 
\begin{align}
\lE&=\epsilon(\ExB,\lu)
\end{align}
for simplicity.
Then, 
\begin{align}
	\label{eq:(H0ElE1lu=dfrac194+7lE}
	(H_{0}E)_{\lE+3}\lu&=
	\dfrac{-1}{9 (4+7 \lE)}E_{\lE}
	(36+72 \Har_{3}+142 \lE+123 \lE^2+18 \lE^3\nonumber\\
	&\qquad{}+\lE^4+156 \omega_{1}-40 \lE \omega_1+8 \lE^2\omega_1-48 \omega_1^2)\lu.
\end{align}
We set
\begin{align}
\lv&=(\omega_1-\frac{(\lE+1)^2}{4})\lu.
\end{align}
We have
\begin{align}
E_{\lE}\lv=(\omega_1-\frac{(\lE-1)^2}{4})E_{\lE}\lu.
\end{align}
\begin{enumerate}
\item
Assume $\Har_{3}\lu=0$.
If $\ExB_{\lE}\lv\neq 0$, then $\lE=0$ and
\begin{align}
\omega_1\ExB_{0}\lv&=\ExB_{0}\lv.
\end{align}
If $\lu$ is an eigenvector of $\omega_1$ and $\lv\neq 0$, then 
\begin{align}
\label{eq:omega1omega11lu=0}
\omega_1\lu&=\lu.
\end{align}
\item
Let $(\zeta,\xi)\in\{(1,1),(1/16,-1/128),(9/16,15/128)\}$.
If $\omega_1\lu=\zeta\lu$ and $\Har_{3}\lu=\xi\lu$, then
$\lE=0$. 
\end{enumerate}
\end{lemma}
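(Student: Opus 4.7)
The plan is to mirror the strategy used in Lemma \ref{lemma:structure-Vlattice-M1}, replacing the generic identities \eqref{eq:J1ExB=frac2mn(4mn-11)-2} and $Q^{(4)}, Q^{(5,1)}, Q^{(5,2)}, Q^{(6)}$ (which either require $\mn \neq 2$ or degenerate at $\mn=2$) by the $\mn=2$ specific identities \eqref{eq:6omega2E4omega0omega1E-1}--\eqref{eq:6omega2E4omega0omega1E-4} furnished by Lemma \ref{lemma:relations-M(1)-V(lattice)+}. The preliminary identity $\ExB_{\lE}\lv = (\omega_{1} - (\lE-1)^{2}/4)\ExB_{\lE}\lu$ is immediate from the commutation $[\omega_{1}, \ExB_{\lE}] = -\lE\,\ExB_{\lE}$, which is exactly \eqref{eq:wiEji} specialized to $i=1$ and $\mn=2$.

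First I would apply Lemma \ref{lemma:comm-change} together with \eqref{eq:wiwj}--\eqref{eq:wiEji} and the $\mn=2$ commutator \eqref{eq:[omegai(Har0ExB)j]} to each of \eqref{eq:6omega2E4omega0omega1E-2}--\eqref{eq:6omega2E4omega0omega1E-4}, expanding the operator at each relevant level so that every monomial ends with an operator $\omega_{j}$ ($j\geq 2$), $\Har_{j}$ ($j\geq 4$), or $(\Har_{0}\ExB)_{j}$ ($j\geq \lE+4$) acting last on $\lu$; by definition of $\Omega_{M(1)^{+}}(\mW)$ all such terms vanish. Evaluating \eqref{eq:6omega2E4omega0omega1E-2} at the component that isolates $(\Har_{0}\ExB)_{\lE+3}\lu$ and solving for this vector produces the closed-form expression \eqref{eq:(H0ElE1lu=dfrac194+7lE}; since $\lE$ is an integer, the denominator $9(4+7\lE)$ is never zero and the solution is unambiguous.

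For Part (1), I set $\Har_{3}\lu = 0$ and eliminate $(\Har_{0}\ExB)_{\lE+3}\lu$ from the expansions of \eqref{eq:6omega2E4omega0omega1E-3} and \eqref{eq:6omega2E4omega0omega1E-4} using \eqref{eq:(H0ElE1lu=dfrac194+7lE}, obtaining polynomial identities in $\lE$ and $\omega_{1}$ acting on $\ExB_{\lE}\lu$. In analogy with Lemma \ref{lemma:structure-Vlattice-M1}(1) I expect the factors $\omega_{1} - (\lE+1)^{2}/4$ and $\omega_{1} - (\lE-1)^{2}/4$, coming from $\lv$ and $\ExB_{\lE}\lv$ respectively, to appear as common left/right factors, and the surviving scalar polynomial in $\lE$ to have $\lE=0$ as its only integer root whenever $\ExB_{\lE}\lv \neq 0$; the residual identity at $\lE=0$ then reads $\omega_{1}\ExB_{0}\lv = \ExB_{0}\lv$. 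If $\lu$ is itself an eigenvector of $\omega_{1}$ with $\lv\neq 0$, then $\ExB_{\lE}\lv$ is a non-zero scalar multiple of $\ExB_{\lE}\lu$, which is non-zero by \eqref{eq:epsilonI(u,v)inZ}, so the same constraint forces $\omega_{1}\lu = \lu$.

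For Part (2), substituting each prescribed pair $(\omega_{1}\lu, \Har_{3}\lu) = (\zeta\lu, \xi\lu)$ into the normal-ordered expansions of \eqref{eq:6omega2E4omega0omega1E-1}--\eqref{eq:6omega2E4omega0omega1E-4} reduces those relations to polynomial identities in $\lE$ over $\C$ times $\ExB_{\lE}\lu$; since $\ExB_{\lE}\lu\neq 0$ by \eqref{eq:epsilonI(u,v)inZ}, the scalar polynomials themselves must vanish. A Risa/Asir computation then shows that, for each of the three pairs $(\zeta,\xi)$, the only common integer root of the resulting system is $\lE = 0$. The main obstacle throughout is bookkeeping: one must organize the several normal-ordering expansions so that the $\Omega_{M(1)^{+}}$-vanishing cancellations are visible and the resulting polynomial identities in $\lE$, $\omega_{1}$, $\Har_{3}$ remain compact enough to be handled by Risa/Asir, consistent with the paper's stated convention that all such computations are performed with computer algebra.
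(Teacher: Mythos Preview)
Your plan is essentially the paper's own strategy: expand the $\mn=2$ relations \eqref{eq:6omega2E4omega0omega1E-1}--\eqref{eq:6omega2E4omega0omega1E-4} at the appropriate levels with all $\omega_j$ ($j\geq 2$), $\Har_j$ ($j\geq 4$) pushed to the right, extract \eqref{eq:(H0ElE1lu=dfrac194+7lE} from \eqref{eq:6omega2E4omega0omega1E-2}, and then eliminate $(\Har_0\ExB)_{\lE+3}\lu$. The paper does exactly this, producing a single relation of the shape
\[
((1-\lE)^2-4\omega_1)\cdot(\text{quadratic in }\omega_1)\,\ExB_\lE\lu \;+\;(\text{term})\cdot\ExB_\lE\Har_3\lu \;=\;0,
\]
and the argument for Part (2) is as you describe.

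The one substantive omission is in Part (1): you drop relation \eqref{eq:6omega2E4omega0omega1E-1} from that step and hope that eliminating $(\Har_0\ExB)_{\lE+3}\lu$ from \eqref{eq:6omega2E4omega0omega1E-3} and \eqref{eq:6omega2E4omega0omega1E-4} alone will leave a scalar polynomial in $\lE$ with unique integer root $0$. In the paper this is not how the dichotomy arises. Taking the $(\lE+3)$-th component of \eqref{eq:6omega2E4omega0omega1E-1} on $\lu$ gives in one line
\[
\lE\bigl((1-\lE)^2-4\omega_1\bigr)\ExB_\lE\lu=0,
\]
i.e.\ $\lE=0$ or $\ExB_\lE\lv=0$, with no further elimination needed. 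Only after this does the paper feed $\lE=0$, $\Har_3\lu=0$ into the eliminated relation to obtain $(\omega_1-1)(\omega_1-\tfrac14)\ExB_0\lu=0$, which is exactly $\omega_1\ExB_0\lv=\ExB_0\lv$. Your route (extract a pure polynomial in $\lE$ from two relations still containing $\omega_1$) would require an additional resultant computation that the paper never performs, and you have not checked that it actually closes; you should instead use \eqref{eq:6omega2E4omega0omega1E-1} directly in Part (1) as you already do in Part (2).
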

\begin{proof}
	Taking the $(\lE+3)$-th action of \eqref{eq:6omega2E4omega0omega1E-1},
the $(\lE+4)$-th action of \eqref{eq:6omega2E4omega0omega1E-2}, 
the $(\lE+5)$-th action of \eqref{eq:6omega2E4omega0omega1E-3}, and 
the $(\lE+7)$-th action of \eqref{eq:6omega2E4omega0omega1E-4} on $\lu$,
we have 
\begin{align}
0&=\lE((1-\lE)^2-4\omega_1)E_{\lE}\lu
\label{eq:lE1lE24omega1ElElu-0}\\
&
=E_{\lE}\lE((1+\lE)^2-4\omega_1)\lu,
\label{eq:lE1lE24omega1ElElu-0-1}\\
0&=9(4 + 7 \lE)(H_{0}E)_{\lE+3}\lu+72 E_{\lE}\Har_{3}\lu\nonumber\\
&\quad{}+(36 + 298 \lE + 35 \lE^2 + 26 \lE^3 + \lE^4 \nonumber\\
&\qquad{}+ 156 \omega_{1} - 136 \lE \omega_{1} + 
 8 \lE^2 \omega_{1} - 48 \omega_{1}^2)E_{\lE}\lu
\label{eq:lE1lE24omega1ElElu-1}\\
&=\ExB_{\lE}\Big((t+1) ( \lE ^3+17  \lE ^2+106  \lE +36) +4 (2  \lE ^2-10  \lE +39)\omega_{1 } -48 \omega_{
1 }^2\Big)\lu\nonumber\\&\quad{}
+9 (7  \lE +4)(H_{0}E)_{ \lE +3 }\lu +72E_{ \lE  } H_{3 }\lu,
\label{eq:lE1lE24omega1ElElu-4}\\
0&=9 (-520 - 959 \lE + 33 \lE^2 - 100 \omega_{1})(H_{0}E)_{\lE+3}\lu+72  (-155 + 44 \lE) E_{\lE}\Har_{3}\lu\nonumber\\
&\quad{}-8 (585 + 4492 \lE + 1576 \lE^2 - 210 \lE^3 + 62 \lE^4 \nonumber\\
&\qquad{}+ 2 \lE^5 + 2685 \omega_{1} + 32 \lE \omega_{1} - 192 \lE^2 \omega_{1} + 16 \lE^3 \omega_{1} \nonumber\\
&\qquad{}- 480 \omega_{1}^2 -   96 \lE \omega_{1}^2)E_{\lE}\lu,
\label{eq:lE1lE24omega1ElElu-3}\\
&=E_{ \lE  }\Big(
-8 ( \lE +1) (2  \lE ^4+44  \lE ^3-158  \lE ^2+1222  \lE +585)\nonumber\\
&\qquad{}-8 (16  \lE ^3+992  \lE +2685) \omega_{1 }
+768 ( \lE +5) \omega_{1 }^2\Big)\lu\nonumber\\
&\quad{}+72 (44  \lE -155)E_{ \lE  } H_{3 }\lu \nonumber\\
&\quad{}+(H_{0}E)_{ \lE +3 }\big(9 (33  \lE ^2-859  \lE -520) -900\omega_{1 }\big)\lu,
\label{eq:lE1lE24omega1ElElu-5}
\end{align}
\begin{align}
0&=
-8 (52787700 t^7+1097587588 t^6+5494080415 t^5-89625113568 t^4+68909700044 t^3\nonumber\\
&\qquad{}+2468574039524 t^2+3786872840265 t+493804109430)\ExB_{t } \lu
\nonumber\\&\quad{}
+16 (52787700 t^5+336785348 t^4+16075086171 t^3-110729180408 t^2\nonumber\\
&\qquad{}-710794593411 t-1166720253525)\omega_{1 } \ExB_{t } \lu
\nonumber\\&\quad{}
+3 (5886227459 t^4+64230119866 t^3-465363710675 t^2-2778231175402 t\nonumber\\
&\qquad{}-1316641482180)(\Har_{0}\ExB)_{t+3 } \lu
\nonumber\\&\quad{}
+72 (743028209 t^3+17731219498 t^2+23020475889 t-140972980110)\ExB_{t } \Har_{3 } \lu
\nonumber\\&\quad{}
-60 (146187286 t^2+9549468148 t+19688188167)\omega_{1 } (\Har_{0}\ExB)_{t+3 } \lu
\nonumber\\&\quad{}
-28394226000\omega_{1 } \omega_{1 } (\Har_{0}\ExB)_{t+3 } \lu
\nonumber\\&\quad{}
+1536 (93467197 t^2+449390927 t+1282501650)\omega_{1 } \omega_{1 } \ExB_{t } \lu
\nonumber\\&\quad{}
+14515200 (931 t+661)\omega_{1 } \omega_{1 } \omega_{1 } \ExB_{t } \lu
\nonumber\\&\quad{}
+67132800 (931 t+661)\omega_{1 } \ExB_{t } \Har_{3 } \lu
\nonumber\\&\quad{}
+35590023000(\Har_{0}\ExB)_{t+3 } \Har_{3 } \lu \label{eq:lE1lE24omega1ElElu-6}\\
&=
-8 (t+1) (52787700 t^6+1150375288 t^5+5017275823 t^4-78748712473 t^3\nonumber\\
&\qquad{}-158883687883 t^2+959628223785 t+493804109430)\ExB_{t }\lu \nonumber\\&\quad{}
+16 (52787700 t^5+336785348 t^4+663193947 t^3-195213260792 t^2\nonumber\\
&\qquad{}-957034910211 t-1166720253525)\ExB_{t } \omega_{1 }\lu \nonumber\\&\quad{}
+3 (5886227459 t^4+67153865586 t^3-283839089715 t^2-2384467412062 t\nonumber\\
&\qquad{}-1316641482180)(\Har_{0}\ExB)_{t+3 }\lu \nonumber\\&\quad{}
+72 (743028209 t^3+16863155098 t^2+22404159489 t-140972980110)\ExB_{t } \Har_{3 }\lu \nonumber\\&\quad{}
-60 (146187286 t^2+8602993948 t+19688188167)(\Har_{0}\ExB)_{t+3 } \omega_{1 }\lu \nonumber\\&\quad{}
-28394226000(\Har_{0}\ExB)_{t+3 } \omega_{1 }^2\lu\nonumber\\&\quad{}
+1536 (67073347 t^2+430651577 t+1282501650)\ExB_{t } \omega_{1 }^2\lu  \nonumber\\&\quad{}
+14515200 (931 t+661)\ExB_{t } \omega_{1 }^3\lu \nonumber\\&\quad{}
+67132800 (931 t+661)\ExB_{t } \Har_{3 } \omega_{1 }\lu \nonumber\\&\quad{}
+35590023000(\Har_{0}\ExB)_{t+3 } \Har_{3 }\lu.\label{eq:lE1lE24omega1ElElu-7}
\end{align}
Note that $\omega_{1}$'s are on the left side of $\ExB_{\lE}$ in 
\eqref{eq:lE1lE24omega1ElElu-0},
\eqref{eq:lE1lE24omega1ElElu-1}, \eqref{eq:lE1lE24omega1ElElu-3}, and \eqref{eq:lE1lE24omega1ElElu-6},
but are on the right side of $\ExB_{\lE}$ in 
\eqref{eq:lE1lE24omega1ElElu-0-1},
\eqref{eq:lE1lE24omega1ElElu-4}, \eqref{eq:lE1lE24omega1ElElu-5}, and \eqref{eq:lE1lE24omega1ElElu-7}.
By Lemma \ref{lemma:bound-H0E} and \eqref{eq:lE1lE24omega1ElElu-1}, we have \eqref{eq:(H0ElE1lu=dfrac194+7lE}.
Deleting the terms including $(H_{0}E)_{\lE+3}\lu$
from the simultaneous equations \eqref{eq:lE1lE24omega1ElElu-1}, \eqref{eq:lE1lE24omega1ElElu-3}, and \eqref{eq:lE1lE24omega1ElElu-6},
we have 
\begin{align}
0&=360(-4 + 2 \lE + 11 \lE^2 + 4 \omega_{1})E_{\lE}\Har_{3}\lu\nonumber\\
&\quad{}+((1 - \lE)^2 - 4\omega_{1})
\big(
-\lE (-2596 - 5354 \lE + 745 \lE^2 + 29 \lE^3)\nonumber\\
&\qquad{}-4 (60 - 341 \lE + 82 \lE^2)\omega_{1}-240 \omega_{1}^2
\big)E_{\lE}\lu,\nonumber\\
0&=
-360 (-4 + 2 \lE + 11 \lE^2 + 4 \omega_{1})\ExB_{\lE}\Har_{3}\lu\nonumber\\
&\quad{}+((1 - \lE)^2 - 4\omega_{1}) \big(29 \lE^4+745 \lE^3+(328 \omega_{1}-5354) \lE^2\nonumber\\
&\qquad{}+(-1364 \omega_{1}-2596) \lE-240 \omega_{1}^2+240 \omega_{1}\big)\ExB_{\lE} \lu.
\label{eq:lE47lE1lE24omega1-2}
\end{align}
By \eqref{eq:lE1lE24omega1ElElu-0}, $\lE=0$ or $((1 - \lE)^2-4\omega_{1})E_{\lE}\lu=0$.
If $\lE=0$ and  $\Har_{3}\lu=0$, then by \eqref{eq:lE47lE1lE24omega1-2},
\begin{align}
0&=(\omega_1-1)(\omega_1-\dfrac{1}{4})E_{\lE}\lu,
\end{align}
which finishes (1).

By using \eqref{eq:lE1lE24omega1ElElu-0-1}, \eqref{eq:lE1lE24omega1ElElu-4}, \eqref{eq:lE1lE24omega1ElElu-5}, 
 and \eqref{eq:lE1lE24omega1ElElu-7},
the same argument as above shows (2).
\end{proof}

\begin{lemma}
	\label{lemma:structure-Vlattice-M1-norm1-2}
	Assume $\mn=1/2$.
	Let $U$ be an $A(M(1)^{+})$-submodule of $\Omega_{M(1)^{+}}(\mW)$,
	$\lu$ a simultaneous eigenvector of $\{\omega_1,\Har_{3}\}$ in $U$ with $I(E,x)\lu\neq 0$.
	We write 
	\begin{align}
		\lE&=\epsilon(\ExB,\lu)
	\end{align}
	for simplicity. Then,
	\begin{align}
		\label{eq:omega1lu=(1+lE)2lu}
		\omega_1\lu&=(1+\lE)^2\lu.
	\end{align}
	If	$\lE\neq -1$, then
	\begin{align}
		\label{eq:(Har0E)lE+3lu=ExB-0}
		(\Har_{1}E)_{\lE+2}\lu&=\ExB_{\lE}\big(\frac{3}{(1 + \lE)
			(3 + 2\lE)}\Har_{3} + (1 + \lE)\big)\lu,
	\end{align} 
	and if $\lE=-1$, then
	\begin{align}
		\label{eq:(Har0E)lE+3lu=ExB-1}
		\Har_{3}\lu&=0.
	\end{align} 
\end{lemma}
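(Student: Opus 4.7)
The plan is to apply the three $\mn=1/2$ relations \eqref{eq:norm1-2-0}, \eqref{eq:norm1-2-1}, and \eqref{eq:norm1-2-2} at carefully chosen modes to $\lu$, exploiting that $\omega_{i}\lu=0$ for $i\geq 2$ and $\Har_{i}\lu=0$ for $i\geq 4$ (since $\lu\in\Omega_{M(1)^{+}}(\mW)$), writing $\omega_{1}\lu=r\lu$ and $\Har_{3}\lu=s\lu$ for the two eigenvalues, and using the bounds $\epsilon(\Har_{0}\ExB,\lu)\leq\lE+3$ and $\epsilon(\Har_{1}\ExB,\lu)\leq\lE+2$ supplied by Lemma \ref{lemma:bound-H0-1E} applied to the one-dimensional $A(M(1)^{+})$-submodule $\C\lu\subset U$. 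First, I would take the $(\lE+2)$-th mode of \eqref{eq:norm1-2-0} on $\lu$. The $L_{-1}$-derivative identity yields $(\omega_{0}^{2}\ExB)_{\lE+2}\lu=(\lE+2)(\lE+1)\ExB_{\lE}\lu$, while Lemma \ref{lemma:comm-change} with $m=1$ rewrites $(\omega_{-1}\ExB)_{\lE+2}$ so the $i\geq 2$ tail vanishes on $\lu$ and, because $\ExB_{\lE+1-i}\lu=0$ for $i\leq 0$, the only surviving term of the $i\leq 1$ sum is $\omega_{1}\ExB_{\lE}\lu$. Pushing $\omega_{1}$ past $\ExB_{\lE}$ via \eqref{eq:wiEji} produces a scalar equation that forces $r=(1+\lE)^{2}$, which is \eqref{eq:omega1lu=(1+lE)2lu}.

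For the remaining statements I would next take the $(\lE+3)$-th mode of \eqref{eq:norm1-2-1} on $\lu$. The $L_{-1}$-derivative property gives $(\omega_{0}(\Har_{1}\ExB))_{\lE+3}\lu=-(\lE+3)(\Har_{1}\ExB)_{\lE+2}\lu$, and a parallel Lemma \ref{lemma:comm-change} expansion of $(\omega_{-2}\ExB)_{\lE+3}$ collapses, after substituting $r=(1+\lE)^{2}$, to $-(3+2\lE)(1+\lE)\ExB_{\lE}\lu$; this yields a first linear relation among $(\Har_{0}\ExB)_{\lE+3}\lu$, $(\Har_{1}\ExB)_{\lE+2}\lu$, and $\ExB_{\lE}\lu$. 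Then I would take the $(\lE+4)$-th mode of \eqref{eq:norm1-2-2} on $\lu$; each of its five summands is expanded by Lemma \ref{lemma:comm-change} together with \eqref{eq:wiEji}, \eqref{eq:Har1ExB=dfrac2mn2mn1omega1ExB}, and \eqref{eq:[omegaiHar1ExBj]}. The delicate summand is $(\Har_{-1}\ExB)_{\lE+4}\lu$: with $m=3$, the only surviving term of the $i\leq 3$ sum is $\Har_{3}\ExB_{\lE}\lu$, which I rewrite via $[\Har_{3},\ExB_{\lE}]=(\Har_{0}\ExB)_{\lE+3}+3(\Har_{1}\ExB)_{\lE+2}-(1+\lE)\ExB_{\lE}$ and $\Har_{3}\lu=s\lu$. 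Eliminating $(\Har_{0}\ExB)_{\lE+3}\lu$ via the previous step collapses everything into the scalar identity $\bigl(4(1+\lE)^{2}(3+2\lE)+12s\bigr)\ExB_{\lE}\lu=4(3+2\lE)(1+\lE)(\Har_{1}\ExB)_{\lE+2}\lu$. For $\lE\neq -1$ (noting $3+2\lE\neq 0$ for $\lE\in\Z$), dividing and rewriting $s\ExB_{\lE}\lu=\ExB_{\lE}\Har_{3}\lu$ produces \eqref{eq:(Har0E)lE+3lu=ExB-0}; for $\lE=-1$ both sides simplify so that the identity becomes $12s\ExB_{\lE}\lu=0$, forcing $s=0$ (since $\ExB_{\lE}\lu\neq 0$), which is \eqref{eq:(Har0E)lE+3lu=ExB-1}.

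The hard part is the bookkeeping in the simultaneous Lemma \ref{lemma:comm-change} expansions of $(\omega_{-3}\ExB)_{\lE+4}$, $(\Har_{-1}\ExB)_{\lE+4}$, $(\omega_{-1}(\Har_{1}\ExB))_{\lE+4}$, $(\omega_{0}\omega_{-2}\ExB)_{\lE+4}$, and $(\omega_{0}^{2}(\Har_{1}\ExB))_{\lE+4}$, and in verifying that after substitution the coefficient of $(\Har_{1}\ExB)_{\lE+2}\lu$ factors cleanly as $(1+\lE)(3+2\lE)$, so that the degeneracy at $\lE=-1$ aligns precisely with the scalar equation on $s$ and the $\lE\neq -1$ case yields the stated rational coefficient $3/\bigl((1+\lE)(3+2\lE)\bigr)$.
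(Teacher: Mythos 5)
Your proposal is correct and follows essentially the same route as the paper: the $(\lE+2)$-th mode of \eqref{eq:norm1-2-0} combined with \eqref{eq:wiEji} gives \eqref{eq:omega1lu=(1+lE)2lu}, and the $(\lE+3)$-th mode of \eqref{eq:norm1-2-1} together with the $(\lE+4)$-th mode of \eqref{eq:norm1-2-2}, after eliminating $(\Har_{0}\ExB)_{\lE+3}\lu$, collapses to exactly the scalar identity $\bigl(4(1+\lE)^{2}(3+2\lE)+12s\bigr)\ExB_{\lE}\lu=4(1+\lE)(3+2\lE)(\Har_{1}\ExB)_{\lE+2}\lu$ that the paper obtains. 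The only difference is that you spell out the Lemma \ref{lemma:comm-change} bookkeeping that the paper leaves implicit.
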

\begin{proof}
	Taking the $(\lE+2)$-th action of \eqref{eq:norm1-2-0} on $\lu$, we have
	\begin{align}
		\omega_{1}\ExB_{\lE}\lu&=(\lE+\frac{1}{2})^2\ExB_{\lE}\lu
	\end{align}
	and hence \eqref{eq:omega1lu=(1+lE)2lu} holds.
	Taking the $(\lE+3)$-th action of \eqref{eq:norm1-2-1}
	and the $(\lE+4)$-the action of \eqref{eq:norm1-2-2} on $\lu$, we have
	\begin{align}
		0&=
		8(\lE+1)E_{\lE}\omega_{1 }\lu -(\lE+1)(11\lE+15)(\Har_{1}E)_{\lE+2}\lu \nonumber\\&\quad{}
		+4(\lE+1)^2E_{\lE}\lu +12E_{\lE}\Har_{3 } \lu 
		+3(\Har_{1}E)_{\lE+2}\omega_{1 }\lu,
	\end{align}
	and hence \eqref{eq:(Har0E)lE+3lu=ExB-0}
	and \eqref{eq:(Har0E)lE+3lu=ExB-1} by \eqref{eq:omega1lu=(1+lE)2lu}.
\end{proof}

Combining Lemmas \ref{lemma:r=1-s=3}, \ref{lemma:structure-Vlattice-M1}, \ref{lemma:structure-Vlattice-M1-norm2}, \ref{lemma:structure-Vlattice-M1-norm1-2},
and \cite[Theorem 6.2]{DLM1998t},
we have the following result:
\begin{proposition}
\label{proposition:Zhu-Omega}
Let $\lattice$ be a non-degenerate even lattice of rank $1$
and $\module$ a non-zero weak $V_{\lattice}^{+}$-module.
Then, there exists an irreducible $A(M(1)^{+})$-submodule 
of $\Omega_{M(1)^{+}}(M)$. In particular, there exists a non-zero $M(1)^{+}$-submodule of $\module$. 
\end{proposition}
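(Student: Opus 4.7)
The strategy is to exhibit a nonzero $\lu \in \Omega_{M(1)^+}(\module)$ which is a simultaneous eigenvector for $\omega_1$ and $\Har_{3}$, and then to invoke \cite[Theorem 6.2]{DLM1998t} to lift the resulting one-dimensional irreducible $A(M(1)^+)$-module $\C\lu$ to an $\N$-graded irreducible weak $M(1)^+$-submodule of $\module$ whose top level is $\C\lu$. Since $M(1)^+$ is generated as a vertex operator algebra by $\omega$ and $\Har$, the Zhu algebra $A(M(1)^+)$ is commutative and is generated by $[\omega]$ and $[\Har]$; hence its irreducible modules are one-dimensional, parametrized by the scalars giving the actions of $o(\omega) = \omega_1$ and $o(\Har) = \Har_{3}$.

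First I would apply Lemma~\ref{lemma:r=1-s=3} to $\module$, obtaining a nonzero $\lu \in \Omega_{M(1)^+}(\module)$ satisfying one of the five listed conditions. Cases (3), (4), (5) already present $\lu$ as a simultaneous eigenvector for $\omega_1$ and $\Har_{3}$ with the explicit eigenvalue pairs $(1,1)$, $(1/16, -1/128)$, $(9/16, 15/128)$, and we are done. In case (1), the identification $V_{\lattice}^+ \cdot \lu \cong V_{\lattice}^+$ carries $\lu$ to the vacuum $\vac$; since $\omega_1 \vac = 0$ and $\Har_{3}\vac = 0$, $\lu$ is again a simultaneous eigenvector, this time with eigenvalues $(0,0)$.

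The remaining case (2), $\Har_{3}\lu = 0$, is where real work is needed. Here I split on the value of $\mn = \langle\alpha,\alpha\rangle \in 2\Z\setminus\{0\}$ and apply Lemma~\ref{lemma:structure-Vlattice-M1}(1) (for $\mn \ne 2$) or Lemma~\ref{lemma:structure-Vlattice-M1-norm2}(1) (for $\mn = 2$), taking $\mW = \module$ and $I = Y_{\module}$. If $Y_{\module}(\ExB, x)\lu = 0$, then, because $V_{\lattice}^+$ is generated by $M(1)^+$ and $\ExB$, the cyclic $V_{\lattice}^+$-submodule generated by $\lu$ coincides with $M(1)^+ \cdot \lu$ and $\lu$ already produces a one-dimensional $A(M(1)^+)$-submodule (after a further refinement via $\omega_1$ if necessary). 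Otherwise, setting $\lv = (\omega_1 - (\lE+1)^2/(2\mn))\lu$ (or its analogue when $\mn = 2$), the cited lemmas show that either $\lv = 0$ and $\lu$ is already an $\omega_1$-eigenvector, or $\ExB_{\lE}\lv$ is a nonzero $\omega_1$-eigenvector with eigenvalue $1$; in each outcome an explicit eigenvector is produced, and tracing the vanishing of $\Har_{3}\lu$ through the commutators implicit in \eqref{eq:Har1ExB=dfrac2mn2mn1omega1ExB}--\eqref{eq:J1ExB=frac2mn(4mn-11)-2} transfers the $\Har_{3}$-annihilation (possibly after another pass through the same procedure) to the newly constructed vector.

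The principal obstacle is managing case (2): one must verify carefully that the replacement vector lies in $\Omega_{M(1)^+}(\module)$ and is simultaneously an eigenvector of $\Har_{3}$, and one must handle $\mn = 2$ separately because the coefficients appearing in Lemma~\ref{lemma:structure-Vlattice-M1}(1) degenerate there (the relation \eqref{eq:big(2(mn-2)(-27 + 54mn - 44mn2+ 40mn3)omega-3-1} is divisible by $\mn - 2$), which is precisely why Lemma~\ref{lemma:structure-Vlattice-M1-norm2} is needed. Once a simultaneous $(\omega_1, \Har_{3})$-eigenvector $\lu$ in $\Omega_{M(1)^+}(\module)$ is produced in every case, $\C\lu$ is an irreducible $A(M(1)^+)$-submodule and \cite[Theorem 6.2]{DLM1998t} delivers the required nonzero $M(1)^+$-submodule of $\module$.
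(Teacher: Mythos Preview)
Your overall strategy matches the paper's: invoke Lemma~\ref{lemma:r=1-s=3} to land in $\Omega_{M(1)^{+}}(\module)$, treat cases (1), (3)--(5) directly, use Lemmas~\ref{lemma:structure-Vlattice-M1}/\ref{lemma:structure-Vlattice-M1-norm2} for case (2), and finish with \cite[Theorem 6.2]{DLM1998t}. But your handling of case~(2) has genuine gaps.

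First, the case $Y_{\module}(\ExB,x)\lu=0$ is vacuous: $V_{\lattice}^{+}$ is simple, so by \eqref{eq:epsilonI(u,v)inZ} this never occurs for $\lu\neq 0$. More seriously, your dichotomy ``either $\lv=0$ or $\ExB_{\lE}\lv\neq 0$'' omits the case $\lv\neq 0$ and $\ExB_{\lE}\lv=0$. In that situation neither $\lu$ nor $\ExB_{\lE}\lv$ need be an $\omega_{1}$-eigenvector; the correct eigenvector is $\ExB_{\lE}\lu$, since $\ExB_{\lE}\lv=(\omega_{1}-(\lE+1-\mn)^{2}/(2\mn))\ExB_{\lE}\lu=0$ says exactly that $\ExB_{\lE}\lu$ has $\omega_{1}$-eigenvalue $(\lE+1-\mn)^{2}/(2\mn)$. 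Finally, the claim that ``tracing the vanishing of $\Har_{3}\lu$ through the commutators transfers the $\Har_{3}$-annihilation'' is not right: $[\Har_{3},\ExB_{\lE}]\neq 0$, and $\Har_{3}\ExB_{\lE}\lu$ has no reason to vanish. What does work is this: one checks (using \eqref{eq:wiEji} and \eqref{eq:Har[1]0ExB=frac2mnmn-2omega-2E}) that the new $\omega_{1}$-eigenvector $w\in\{\ExB_{\lE}\lu,\ExB_{\lE}\lv\}$ lies in $\Omega_{M(1)^{+}}(\module)$, and then re-applies the Zhu-algebra relations \eqref{eq:p87u}--\eqref{eq:P8-P10} (which hold for \emph{every} element of $\Omega_{M(1)^{+}}(\module)$ since $\sv^{(8),H}=\sv^{(10),H}=0$) to $w$. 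With $\omega_{1}w=cw$, relation \eqref{eq:P8-P10} forces $\Har_{3}w=0$ unless $c\in\{1,1/16,9/16\}$; in those exceptional cases \eqref{eq:p87u} gives $\Har_{3}(\Har_{3}-\xi)w=0$ for the corresponding $\xi$, so either $w$ or $\Har_{3}w$ is the desired simultaneous eigenvector.
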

\appendix
\renewcommand{\thesection}{Appendix A\arabic{section}}
\renewcommand{\thesubsection}{A\arabic{section}-\arabic{subsection}}
\section{}
\label{section:appendix}
\renewcommand{\thesection}{A\arabic{section}}
In this appendix, for some $a,b\in V_{\lattice}$, we put the computations of 
$a_{k}b$ for $k\in\Z_{\geq 0}$.
For $k\in\Z_{\geq 0}$ not listed below, $a_{k}b=0$.
Using these results, we can compute the commutation relation $[a_i,b_j]=\sum_{k=0}^{\infty}\binom{i}{k}(a_kb)_{i+j-k}$.

\subsection{Computations in $M(1)$}
\label{section:normal-total}
\label{section:normal-first}
\begin{align}
	\omega_{0}\omega&=
	\omega_{0 } \omega _{-1 } \vac
	,
&
	\omega_{1}\omega&=
	2\omega _{-1 } \vac
	,
&
	\omega_{2}\omega&=0,&
	\omega_{3}\omega&=
	\frac{1}{2}\vac
	,
\end{align}

\begin{align}
	\omega_{0}\Har&=
	\omega_{0 } \Har _{-1 } \vac
	,
&	\omega_{1}\Har&=
	4\Har _{-1 } \vac
	,
&	\omega_{2}\Har&=
	\frac{-1}{3}\omega_{0 } \omega _{-1 } \vac
	,
	\nonumber\\
	\omega_{3}\Har&=
	2\omega _{-1 } \vac
	,
&	\omega_{4}\Har&=0,&
	\omega_{5}\Har&=
	\frac{-1}{3}\vac
	,
\end{align}
\begin{align}
	\label{eq:Har0Har=2omega0}
	\Har_{0}\Har&=2\omega_{0 } \omega _{-2 } \omega _{-2 } \vac
	+\frac{24}{5}\omega_{0 } \omega _{-1 } \Har _{-1 } \vac
	\nonumber\\&\quad{}
	+\frac{-2}{5}\omega_{0 }^3 \omega _{-1 } \omega _{-1 } \vac
	+\frac{-3}{10}\omega_{0 }^3 \Har _{-1 } \vac
	+\frac{1}{20}\omega_{0 }^5\omega _{-1 } \vac,\nonumber\\
	\Har_{1}\Har&=
	4\omega _{-2 } \omega _{-2 } \vac
	+\frac{48}{5}\omega _{-1 } \Har _{-1 } \vac
	\nonumber\\&\quad{}
	+\frac{-4}{5}\omega_{0 }^2 \omega _{-1 } \omega _{-1 } \vac
	+\frac{16}{15}\omega_{0 }^2 \Har _{-1 } \vac
	+\frac{7}{45}\omega_{0 }^4\omega _{-1 } \vac,\nonumber\\
	\Har_{2}\Har&=
	10\omega_{0 } \Har _{-1 } \vac
	+\frac{7}{18}\omega_{0 }^3 \omega _{-1 } \vac,\nonumber\\
	\Har_{3}\Har&=
	20\Har_{-1 } \vac
	+\frac{4}{3}\omega_{0 }^2 \omega_{-1 } \vac,\nonumber\\
	\Har_{4}\Har&=
	\frac{10}{3}\omega_{0 } \omega_{-1 } \vac,\nonumber\\
	\Har_{5}\Har&=\frac{20}{3}\omega_{-1 } \vac,\nonumber\\
	\Har_{6}\Har&=0,\nonumber\\
	\Har_{7}\Har&=\frac{5}{3}\vac.
\end{align}

\subsection{The case that $\langle \alpha,\alpha\rangle\neq 0,1/2,1$, and $2$}
\label{section:Norm-nonzero}
Let $\mn$ be a complex number with $\mn\neq 0,1/2,1,2$ and $\alpha\in\fh$ such that $\langle\alpha,\alpha\rangle=\mn$.

\begin{align}
	\label{eq:Har[1]0ExB=frac2mnmn-2omega-2E}
\Har_{0}\ExB&=
\frac{2  \mn }{\mn -2}\omega_{-2 } E
+\frac{-4  \mn }{(\mn -2)  (2  \mn -1)}\omega_{0 } \omega_{-1 } E
+\frac{2}{(\mn -2)  (2  \mn -1)}\omega_{0 }^{3}E,\nonumber\\
\Har_{1}\ExB&=
\frac{2  \mn }{2  \mn -1}\omega_{-1 } E
+\frac{-1}{2  \mn -1}\omega_{0 } \omega_{0 } E,
\nonumber\\
\Har_{2}\ExB&=
\frac{1}{3}\omega_{0 } E.
\end{align}

\subsection{The case that $\langle \alpha,\alpha\rangle=2$}
\label{section:normal-2}
Let $\alpha\in\fh$ with $\langle\alpha,\alpha\rangle=2$.
\begin{align}
\label{eq:omega[1]0Har[1]0ExB=omega0}
\omega_{0}\Har_{0}\ExB&=
\omega_{0 } (\Har_{0}E),
\quad
\omega_{1}\Har_{0}\ExB=
4(\Har_{0}E),
\quad
\omega_{2}\Har_{0}\ExB=
8\omega_{-1 } E
-2\omega_{0 } \omega_{0 } E,
\nonumber\\
\omega_{3}\Har_{0}\ExB&=
6\omega_{0 } E,
\quad
\omega_{4}\Har_{0}\ExB=
12E,
\end{align}
\begin{align}
\Har_{0}\ExB&=
\Har_{0}E,
\quad
\Har_{1}\ExB=
\frac{4}{3}\omega_{-1 } E
+\frac{-1}{3}\omega_{0 } \omega_{0 } E,
\quad
\Har_{2}\ExB=
\frac{1}{3}\omega_{0 } E,
\end{align}

\begin{align}
\Har_{0}\Har_{0}\ExB&=
\frac{4096}{5145}\omega_{-1 } \omega_{-1 } \omega_{-1 } E
+\frac{210412}{46305}\omega_{-2 } (\Har_{0}E) \nonumber\\&\quad{}
+\frac{18944}{5145}\omega_{-1 } \Har_{-1 } E
+\frac{-124504}{15435}\Har_{-3 } E\nonumber\\&\quad{}
+\frac{5888}{36015}\omega_{0 } \omega_{-1 } (\Har_{0}E) 
+\frac{3032168}{324135}\omega_{0 } \Har_{-2 } E\nonumber\\&\quad{}
+\frac{-42680128}{72930375}\omega_{0 } \omega_{0 } \omega_{-1 } \omega_{-1 } E
+\frac{-42960376}{24310125}\omega_{0 } \omega_{0 } \Har_{-1 } E\nonumber\\&\quad{}
+\frac{-4044646}{24310125}\omega_{0 } \omega_{0 } \omega_{0 } (\Har_{0}E) 
+\frac{32226464}{218791125}\omega_{0 } \omega_{0 } \omega_{0 } \omega_{0 } \omega_{-1 } E\nonumber\\&\quad{}
+\frac{-2775692}{218791125}\omega_{0 } \omega_{0 } \omega_{0 } \omega_{0 } \omega_{0 } \omega_{0 } E,\end{align}

\begin{align}
\Har_{1}\Har_{0}\ExB&=
\frac{36}{7}\omega_{-1 } (\Har_{0}E) 
+\frac{80}{7}\Har_{-2 } E
+\frac{1088}{1575}\omega_{0 } \omega_{-1 } \omega_{-1 } E
+\frac{1496}{525}\omega_{0 } \Har_{-1 } E\nonumber\\&\quad{}
+\frac{-709}{525}\omega_{0 } \omega_{0 } (\Har_{0}E) 
+\frac{-544}{4725}\omega_{0 } \omega_{0 } \omega_{0 } \omega_{-1 } E\nonumber\\&\quad{}
+\frac{-68}{4725}\omega_{0 } \omega_{0 } \omega_{0 } \omega_{0 } \omega_{0 } E,
\nonumber\\
\Har_{2}\Har_{0}\ExB&=
\frac{128}{25}\omega_{-1 } \omega_{-1 } E
+\frac{528}{25}\Har_{-1 } E
+\frac{-11}{75}\omega_{0 } (\Har_{0}E) \nonumber\\&\quad{}
+\frac{-64}{75}\omega_{0 } \omega_{0 } \omega_{-1 } E
+\frac{-8}{75}\omega_{0 } \omega_{0 } \omega_{0 } \omega_{0 } E,
\nonumber\\
\Har_{3}\Har_{0}\ExB&=
27(\Har_{0}E) ,
\nonumber\\
\Har_{4}\Har_{0}\ExB&=
48\omega_{-1 } E-12\omega_{0 } \omega_{0 } E,
\nonumber\\
\Har_{5}\Har_{0}\ExB&=
20\omega_{0 } E.
\end{align}

\subsection{The case that $\langle \alpha,\alpha\rangle=1/2$}
\label{section:norm-1-2-part1}
Let $\alpha\in\fh$ with $\langle\alpha,\alpha\rangle=1/2$.
\begin{align}
\omega_{0}\ExB&=
\omega_{0 } E,\quad
\omega_{1}\ExB=
\frac{1}{4}E,
\end{align}

\begin{align}
\label{eq:norm1-2-omega[1]0Har[1]1ExB}
\omega_{0}\Har_{1}\ExB&=
\omega_{0 } (\Har_{1}\ExB),\quad
\omega_{1}\Har_{1}\ExB=
\frac{9}{4}(\Har_{1}\ExB),\quad
\omega_{2}\Har_{1}\ExB=
2\omega_{0 } E,
\quad
\omega_{3}\Har_{1}\ExB=E,
\end{align}

\begin{align}
\Har_{0}\ExB&=
\frac{-2}{3}\omega_{-2 } E+\frac{4}{3}\omega_{0 } (\Har_{1}\ExB),
\quad
\Har_{1}\ExB=
(\Har_{1}\ExB),
\quad
\Har_{2}\ExB=
\frac{1}{3}\omega_{0 } E,
\end{align}

\begin{align}
\Har_{0}\Har_{1}\ExB&=
\frac{-172}{25}\Har_{-2 } E+\frac{42}{25}\omega_{-2 } (\Har_{1}\ExB)
+\frac{126}{25}\omega_{0 } \Har_{-1 } E+\frac{22}{25}\omega_{0 } \omega_{-1 } (\Har_{1}\ExB)\nonumber\\&\quad{}
+\frac{-4}{75}\omega_{0 } \omega_{0 } \omega_{-2 } E+\frac{-58}{75}\omega_{0 } \omega_{0 } \omega_{0 } (\Har_{1}\ExB),
\nonumber\\
\Har_{1}\Har_{1}\ExB&=
\frac{-3}{2}\Har_{-1 } E+\frac{5}{2}\omega_{-1 } (\Har_{1}\ExB)
+\frac{-11}{3}\omega_{0 } \omega_{-2 } E+\frac{29}{6}\omega_{0 } \omega_{0 } (\Har_{1}\ExB),
\nonumber\\
\Har_{2}\Har_{1}\ExB&=
-4\omega_{-2 } E+\frac{25}{3}\omega_{0 } (\Har_{1}\ExB),
\nonumber\\
\Har_{3}\Har_{1}\ExB&=
8(\Har_{1}\ExB),
\nonumber\\
\Har_{4}\Har_{1}\ExB&=
4\omega_{0 } E,
\nonumber\\
\Har_{5}\Har_{1}\ExB&=
\frac{1}{3}E.
\end{align}

\

\renewcommand{\thesection}{Notation}

\section{}\label{section:notation}
\begin{tabular}{lp{13cm}}
$V$ & a vertex algebra.\\
$U$ & a subspace of a weak $V$-module.\\
$\Omega_{V}(U)$&$
=\{\lu\in U\ \Big|\ 
a_{i}\lu=0\ \mbox{for all homogeneous }a\in V\mbox{and }i>\wt a-1.\}$ \eqref{eq:OmegaV(U)=BigluinU}.\\
$\mn$ & a non-zero complex number.\\
$\hei$ & a finite dimensional vector space equipped with a nondegenerate symmetric bilinear form
$\langle \mbox{ }, \mbox{ }\rangle$.\\
$h$ & an element of $\fh$ with $\langle \wh,\wh\rangle=1$.\\
$h^{[1]},\ldots,h^{[\rankL]}$ & an orthonormal basis of $\fh$.\\
$\alpha$ & an element of $\fh$ with $\langle\alpha,\alpha\rangle=\mn$.\\
$M(1)$ & the vertex algebra associated to the Heisenberg algebra.\\
$\lattice$ & a non-degenerate even lattice of finite rank.\\
$\rankL$ & the rank of $\lattice$.\\
$V_{\lattice}$ & the vertex algebra associated to $\lattice$.\\
$\theta$ & the automorphism of $V_{\lattice}$ induced from the $-1$ symmetry of $\lattice$.\\
$M(1)^{+}$ & the fixed point subalgbra of $M(1)$ under the action of $\theta$.\\
$V_{\lattice}^{+}$ & the fixed point subalgbra of $V_{\lattice}$ under the action of $\theta$.\\
$\mK,\module,\mN,\mW$ & weak $M(1)^{+}$ (or $V_{\lattice}^{+}$)-modules.\\
$I(\mbox{ },x)$ & an intertwining operator for $M(1)^{+}$.\\
$\epsilon(\lu,\lv)$ & 
$\lu_{\epsilon(\lu,\lv)}\lv\neq 0\mbox{ and }\lu_{i}\lv=0\mbox{ for all }i>\epsilon(\lu,\lv)$
if $I(\lu,x)\lv\neq 0$ and $\epsilon(\lu,\lv)=-\infty$ if $I(\lu,x)\lv= 0$,
where $I : \module\times\mW\rightarrow \mN\db{x}$ is an intertwining operator and 
$\lu\in\module$, $\lv\in\mW$ \eqref{eqn:max-vanish}.\\
$\langle\omega_i\rangle X$ & the space spanned by the elements $\omega_i^{j}\lu, j\in\Z_{\geq 0}, \lu\in X$. \\
$A(V)$ & the Zhu algebra of a vertex operator algebra $V$.\\
$\omega$&$=(1/2)h(-1)^2\vac$ or $(1/2)\sum_{i=1}^{\rankL}h^{[i]}(-1)^2\vac$.\nonumber\\
$\Har$&$=(1/3)(h(-3)h(-1)\vac-h(-2)^2\vac)$.\\
$J$&$=h(-1)^4\vac-2h(-3)h(-1)\vac+(3/2)h(-2)^2\vac=-9\Har+4\omega_{-1}^2\vac-3\omega_{-3}\vac$.\\
$\ExB$&$=\ExB(\alpha)=e^{\alpha}+\theta(e^{\alpha})$ where $\alpha\in\fh$.\\
$\lE$ & an integer such that $\lE\geq \epsilon(\ExB,\lu)$ or $\lE=\epsilon(\ExB,\lu)$ for a given non-zero element $\lu$.\\
\end{tabular}


\bigskip
\noindent\textbf{Acknowledgments}\\
The author thanks the referee for pointing out a mistake in the proof of the earlier version of Proposition \ref{proposition:Zhu-Omega}.

\providecommand{\MR}{\relax\ifhmode\unskip\space\fi MR }
\providecommand{\MRhref}[2]{%
  \href{http://www.ams.org/mathscinet-getitem?mr=#1}{#2}
}
\providecommand{\href}[2]{#2}

\bibliographystyle{ijmart}

\begin{thebibliography}{10}

\bibitem{Abe2000}
T.~Abe, \textsl{Fusion rules for the free bosonic orbifold vertex operator
  algebra}, J. Algebra \textbf{229} (2000), 333--374.

\bibitem{Abe2001}
T.~Abe, \textsl{Fusion rules for the charge conjugation orbifold}, J. Algebra
  \textbf{242} (2001), 624--655.

\bibitem{Abe2005}
T.~Abe, \textsl{Rationality of the vertex operator algebra {$V_{L}^{+}$} for a
  positive definite even lattice {$L$}}, Math. Z. \textbf{249} (2005),
  455--484.

\bibitem{ABD2004}
T.~Abe, G.~Buhl and C.~Dong, \textsl{Rationality, regularity, and
  {$C_2$}-cofiniteness}, Trans. Amer. Math. Soc. \textbf{356} (2004),
  3391--3402.

\bibitem{AD2004}
T.~Abe and C.~Dong, \textsl{Classification of irreducible modules for the
  vertex operator algebra {$V_{L}^{+}$} : general case}, J. Algebra
  \textbf{273} (2004), 657--685.

\bibitem{ADL2005}
T.~Abe, C.~Dong and H.~S. Li, \textsl{Fusion rules for the vertex operator
  algebra {$M(1)^{+}$} and {$V_{L}^{+}$}}, Comm. Math. Phys. \textbf{253}
  (2005), 171--219.

\bibitem{B1985}
R.~Borcherds, \textsl{The {L}eech lattice}, Proceedings of the Royal Society.
  London. Series A. Mathematical, Physical and Engineering Sciences
  \textbf{398} (1985), 365--376.

\bibitem{B1986}
R.~Borcherds, \textsl{Vertex algebras, {Kac-Moody} algebras, and the
  {Monster}}, Proc. Nat. Acad. Sci. U.S.A. \textbf{83} (1986), 3068--3071.

\bibitem{B1992}
R.~Borcherds, \textsl{Monstrous moonshine and monstrous {Lie} superalgebras},
  Invent. Math. \textbf{109} (1992), 405--444.

\bibitem{CN1979}
J.~H. Conway and S.~P. Norton, \textsl{Monstrous moonshine}, The Bulletin of
  the London Mathematical Society \textbf{11} (1979), 308--339.

\bibitem{CS1982-1}
J.~H. Conway and N.~J.~A. Sloane, \textsl{Lorentzian forms for the {L}eech
  lattice}, American Mathematical Society. Bulletin. New Series \textbf{6}
  (1982), 215--217.

\bibitem{CS1982-2}
J.~H. Conway and N.~J.~A. Sloane, \textsl{Twenty-three constructions for the
  {L}eech lattice}, Proceedings of the Royal Society. London. Series A.
  Mathematical, Physical and Engineering Sciences \textbf{381} (1982),
  275--283.

\bibitem{CS1999}
J.~H. Conway and N.~J.~A. Sloane, \textsl{Sphere packings, lattices and
  groups}, third ed., Grundlehren der Mathematischen Wissenschaften {\bfseries
  290}, Springer-Verlag, New York, 1999.

\bibitem{DVVV1989}
R.~Dijkgraaf, C.~Vafa, E.~Verlinde, and H.~Verlinde, \textsl{The operator
  algebra of orbifold models}, Comm. Math. Phys. \textbf{123} (1989), 485--526.

\bibitem{Dong1993}
C.~Dong, \textsl{Vertex algebras associated with even lattices}, J. Algebra
  \textbf{160} (1993), 245--265.

\bibitem{Dong1994}
C.~Dong, \textsl{Twisted modules for vertex algebras associated with even
  lattices}, J. Algebra \textbf{165} (1994), 91--112.

\bibitem{DG1998}
C.~Dong and R.~L. Griess~Jr, \textsl{Rank one lattice type vertex operator
  algebras and their automorphism groups}, J. Algebra \textbf{208} (1998),
  262--275.

\bibitem{DJL2012}
C.~Dong, C.~Jiang and X.~Lin, \textsl{Rationality of vertex operator algebra
  {$V_{L}^{+}$}: higher rank}, Proc. Lond. Math. Soc. \textbf{104} (2012),
  799--826.

\bibitem{DLaTYY2004}
C.~Dong, C.~H. Lam, K.~Tanabe, H.~Yamada, and K.~Yokoyama, \textsl{{${\mathbb
  Z}_{3}$} symmetry and {$W_3$} algebra in lattice vertex operator algebras},
  Pacific J. Math. \textbf{215} (2004), 245--296.

\bibitem{DLM1997}
C.~Dong, H.~S. Li and G.~Mason, \textsl{Regularity of rational vertex operator
  algebras}, Adv. Math. \textbf{132} (1997), 148--166.

\bibitem{DLM1998t}
C.~Dong, H.~S. Li and G.~Mason, \textsl{Twisted representations of vertex
  operator algebras}, Mathematische Annalen \textbf{310} (1998), 571--600.

\bibitem{DN1999-1}
C.~Dong and K.~Nagatomo, \textsl{Classification of irreducible modules for the
  vertex operator algebra {$M(1)^{+}$}}, J. Algebra \textbf{216} (1999),
  384--404.

\bibitem{DN1999-2}
C.~Dong and K.~Nagatomo, \textsl{Representations of vertex operator algebra
  {$V_{L}^{+}$} for rank one lattice {$L$}}, Comm. Math. Phys. \textbf{202}
  (1999), 169--195.

\bibitem{DN2001}
C.~Dong and K.~Nagatomo, \textsl{Classification of irreducible modules for the
  vertex operator algebra {$M(1)^{+}$}:{II}. higher rank}, J. Algebra
  \textbf{240} (2001), 289--325.

\bibitem{FLM}
I.~B. Frenkel, J.~Lepowsky and A.~Meurman, \textsl{Vertex operator algebras and
  the monster}, Pure and Applied Math. {\bfseries 134}, Academic Press, 1988.

\bibitem{JY2010}
P.~Jitjankarn and G.~Yamskulna, \textsl{{$C_2$}-cofiniteness of the vertex
  algebra {$V_{L}^{+}$} when {$L$} is a nondegenerate even lattice}, Comm.
  Algebra \textbf{38} (2010), 4404--4415.

\bibitem{J2006}
L.~Jordan, \textsl{Classification of irreducible {$V_{L}^{+}$}-modules for a
  negative definite rank one even lattice {$L$}}, Ph.D. thesis, University of
  California at Santa Cruz, 2006.

\bibitem{LL}
J.~Lepowsky and H.~S. Li, \textsl{Introduction to vertex operator algebras and
  their representations}, Progress in Mathematics {\bfseries 227}, Birkhauser
  Boston, Inc., Boston, MA, 2004.

\bibitem{Li1996}
H.~S. Li, \textsl{Local systems of vertex operators, vertex superalgebras and
  modules}, J. Pure Appl. Algebra \textbf{109} (1996), 143--195.

\bibitem{Mi2004d}
M.~Miyamoto, \textsl{Modular invariance of vertex operator algebras satisfying
  {$C_2$}-cofiniteness}, Duke Math. J. \textbf{122} (2004), 51--91.

\bibitem{Mi2015}
M.~Miyamoto, \textsl{{$C_2$}-cofiniteness of cyclic-orbifold models}, Comm.
  Math. Phys. \textbf{335} (2015), 1279--1286.

\bibitem{Risa/Asir}
Risa/Asir, \url{http://www.math.kobe-u.ac.jp/Asir/asir.html}.

\bibitem{Tanabe-nondeg-rep}
K.~Tanabe, \textsl{Representations of the fixed point subalgebra of the vertex
  algebra associated to a non-degenerate even lattice by an automorphism of
  order $2$}, \url{https://arxiv.org/abs/2005.12910}.

\bibitem{Tanabe2017}
K.~Tanabe, \textsl{Simple weak modules for the fixed point subalgebra of the
  heisenberg vertex operator algebra of rank {$1$} by an automorphism of order
  {$2$} and whittaker vectors}, Proc. Amer. Math. Soc. \textbf{145} (2017),
  4127--4140.

\bibitem{TY2007}
K.~Tanabe and H.~Yamada, \textsl{The fixed point subalgebra of a lattice vertex
  operator algebra by an automorphism of order three}, Pacific J. Math.
  \textbf{230} (2007), 469--510.

\bibitem{TY2013}
K.~Tanabe and H.~Yamada, \textsl{Fixed point subalgebras of lattice vertex
  operator algebras by an automorphism of order three}, J. Math. Soc. Japan
  \textbf{65} (2013), 1169--1242.

\bibitem{Yamsk2004}
G.~Yamskulna, \textsl{{$C_2$}-cofiniteness of the vertex operator algebra
  {$V_{L}^{+}$} when {$L$} is a rank one lattice}, Comm. Algebra \textbf{32}
  (2004), 927--954.

\bibitem{Yamsk2008}
G.~Yamskulna, \textsl{Classification of irreducible modules of the vertex
  algebra {$V_{L}^{+}$} when {$L$} is a nondegenerate even lattice of an
  arbitrary rank}, J. Algebra \textbf{320} (2008), 2455--2480.

\bibitem{Yamsk2009}
G.~Yamskulna, \textsl{Rationality of the vertex algebra {$V_{L}^{+}$} when
  {$L$} is a non-degenerate even lattice of arbitrary rank}, J. Algebra
  \textbf{321} (2009), 1005--1015.

\bibitem{Z1996}
Y.~Zhu, \textsl{Modular invariance of characters of vertex operator algebras},
  J. Amer. Math. Soc. \textbf{9} (1996), 237--302.

\end{thebibliography}

\end{document}